\def\setliststart#1{\setcounter{\@listctr}{#1}%
  \addtocounter{\@listctr}{-1}}
\newtheorem{theorem}{Theorem}[section]
\newtheorem{lemma}[theorem]{Lemma}
\newtheorem{proposition}[theorem]{Proposition}
\newtheorem{corollary}[theorem]{Corollary}
\newtheorem{remarks}[theorem]{Remark}
\newtheorem{example}[theorem]{Example}
\newtheorem{definition}[theorem]{Definition}
\numberwithin{equation}{section}
\newcommand{\R}{\mathbb{R}}
\newcommand{\N}{\mathbb{N}}
\newcommand{\K}{\mathcal{K}}
\newcommand{\LL}{\mathcal{L}}
\newcommand{\la}{\Lambda}
\DeclareMathOperator*{\argmin}{argmin} 
\DeclareMathOperator*{\supp}{spt}
\DeclareMathOperator*{\esssup}{ess\ sup}
\DeclareMathOperator*{\eps}{\varepsilon}
\DeclareMathOperator*{\mane}{\alpha({\it L})}
\DeclareMathOperator*{\SR}{SR}
\def\moverlay{\mathpalette\mov@rlay}
\def\mov@rlay#1#2{\leavevmode\vtop{%
   \baselineskip\z@skip \lineskiplimit-\maxdimen
   \ialign{\hfil$\m@th#1##$\hfil\cr#2\crcr}}}
\newcommand{\charfusion}[3][\mathord]{
    #1{\ifx#1\mathop\vphantom{#2}\fi
        \mathpalette\mov@rlay{#2\cr#3}
      }
    \ifx#1\mathop\expandafter\displaylimits\fi}
\title[Asymptotic analysis for H-J equations]{Asymptotic analysis for Hamilton-Jacobi-Bellman equations \\ on Euclidean space}
\author{Piermarco Cannarsa \and Cristian Mendico}
\address{Dipartimento di Matematica, Universit\`a di Roma ``Tor Vergata'', Via della Ricerca Scientifica 1, 00133 Roma, Italy}
\email{cannarsa@mat.uniroma2.it}
\address{GSSI-Gran Sasso Science Institute, Viale F. Crispi 7, 67100  L'Aquila and CEREMADE, Universit\'e Paris-Dauphine, Place du Mar\'echal de Lattre de Tassigny - 75775 PARIS Cedex 16}
\email{cristian.mendico@gssi.it}
\date{\today}
\subjclass[2010]{35B40; 35F21; 37J60}
\keywords{Weak KAM theory; Long time behavior; Sub-Riemannian control}
\begin{document}
\usetagform{blue}
\maketitle
\begin{abstract}
The long-time average behavior of the value function in the calculus of variations is known to be connected to the existence of the limit of the corresponding Abel means. Still in the Tonelli case, such a limit is in turn related to the existence of solutions of the critical Hamilton-Jacobi equation. The goal of this paper is to address similar issues when set on the whole Euclidean space and the Hamiltonian fails to be Tonelli. We first study the convergence of the time-averaged value function as the time horizon goes to infinity, proving the existence of the critical constant for a general control system. Then, we show that the ergodic equation admits solutions for systems associated with a family of vector fields which satisfies the Lie Algebra rank condition. Finally,  we construct a critical solution of the HJB equation on the whole space which coincides with its Lax-Oleinik evolution.
 \end{abstract}

\tableofcontents
\section{Introduction}

The study of the long-time behaviour of solutions to Hamilton-Jacobi equations has a long history going back to the seminal paper \cite{bib:LPV}. Among the many works that have been published on the subject, when the state space is compact and the Hamiltonian is Tonelli, we refer, for instance, to \cite{bib:FAS, bib:FA} and references therein. Moreover, the papers \cite{bib:BJMR},  \cite{bib:BOTT},  \cite{bib:FM},  \cite{bib:HI},  \cite{bib:III},  \cite{bib:ISI} studied the case of a non-compact state space and a Tonelli or quasi-Tonelli Hamiltonian. However, when the Hamiltonian is neither strictly convex nor coercive---as is the case of Hamilton-Jacobi-Bellman equations---such a problem is widely open, even thought specific models were addressed in \cite{bib:CNS},  \cite{bib:XY},  \cite{bib:PCE},  \cite{bib:BA}, and \cite{bib:ALE} where the ergodic problem associated with the so-called $G$-equation or other non-coercive Hamiltonians on compact manifolds are treated. See also \cite{bib:ABG, bib:ABE}  \cite{bib:KDV}, and \cite{bib:MG} for more on second order differential games.

In this paper, we contribute to the aforementioned analysis by studying the long-time behavior of solutions to Hamilton-Jacobi-Bellman equations. As is well-known, for such problems the Hamiltonian fails to be coercive and/or strictly convex.

The first part of this work is devoted to the asymptotic behavior of the value function associated with a general state equation of the form
\begin{equation}\label{eq:generalcontrol}
\dot\gamma(t)=f(\gamma(t), u(t)), \quad (t \geq 0)
\end{equation}
where $u:[0,\infty) \to \R$ is a measurable function and $f: \R^{d} \times \R^{m} \to \R$ is a Lipschitz continuous function (in space) with sublinear growth in space and control. Given a Lagrangian $L: \R^{d} \times \R^{m} \to \R$, an initial position $x \in \R^{d}$, and a time horizon $T > 0$  we consider the problem of minimizing the functional
\begin{equation*}
u \longmapsto \int_{0}^{T}{L(\gamma_{u}^{x}(s), u(s))\ ds}	
 \end{equation*}
over the space of all measurable controls $u:[0,T]\to\R^{m}$, where $\gamma_{u}^{x}$ denotes the solution of \eqref{eq:generalcontrol} such that $\gamma(0)=x$. So, we address the existence of the limit $V_{T}(x)/T$ as $T \to \infty$ where
  \begin{equation*}
   	V_{T}(x)=\inf_{u} \int_{0}^{T}{L(\gamma_{u}^{x}(t), u(t))\ dt} \quad (x \in \R^{d}).
   \end{equation*}
  Our analysis shows that such a limit exists locally uniformly and is independent of the initial position $x \in \R^{d}$, that is, there exists a real number $\mane$ such that for any $R \geq 0$
   \begin{equation}\label{eq:prelimit}
\lim_{T \to \infty} \sup_{x \in \overline{B}_{R}} \left|\frac{1}{T}V_{T}(x) - \mane\right|=0. 
\end{equation} 
For the proof of the above result, a major difficulty is the lack of compactness of the configuration space. In order to cope with such an issue we appeal to condition {\bf (L3)} below, which ensures the existence of a compact attractor for all minimizing trajectories. In particular, such an assumption allows us to show that the excursion time of minimizers for $V_{T}$ from the attractor is locally bounded uniformly w.r.t. $T$, see \Cref{prop:constraint} below. We observe that an hypothesis of the same type as {\bf (L3)} was used, in \cite{bib:CCMW}, to study the long-time behavior of  first-order Mean Field Games systems on Euclidean space  and, in \cite{bib:ISI}, to investigate the limit behavior of  discounted Hamilton-Jacobi equations on the whole space.

Following \cite{bib:FA},  the existence of the limit in  \eqref{eq:prelimit} is known to be related to the existence of a solution $(c,\chi)$ of the ergodic Hamilton-Jacobi equation
\begin{equation}\label{eq:criticalprel}
c +H(x, D\chi(x))=0 \quad (x \in \R^{d})	
\end{equation}
provided that 
\begin{equation*}
H(x,p):=\sup_{u \in \R^{m}}\left\{\langle p, f(x,u) \rangle - L(x,u)\right\}
\end{equation*}
is of Tonelli type.
In the second part of the paper, having the critical constant $\mane$ at our disposal, we study the existence of solutions to \eqref{eq:criticalprel} with $c=\mane$ for a class of Hamiltonians that are not Tonelli, that is, when $f: \R^{d} \times \R^{m} \to \R^{d}$ has the form
\begin{equation*}
f(x,u)=\sum_{i=1}^{m}{u_{i}f_{i}(x)}
\end{equation*} 
where $f_{i}$ are $m \in \{1, \cdots, d\}$ smooth vector fields defined on $\R^{d}$, with sublinear growth.  The main assumption on such a model is the so-called Chow condition (also known as H\"ordmander condition in PDE), i.e., the fact that iterated Lie brackets of $f_{1}$, $\dots, f_{m}$ generate the whole tangent space at any point. Indeed, this condition implies that the system is controllable, that is, given any two points in the state space one can find a control that generates a path which joins the two points. Such systems are naturally associated with a new metric on the state space---the sub-Riemannian metric---which in general fails to be equivalent to the classical Euclidean metric, see for instance \cite{bib:ABB}, \cite{bib:COR}, \cite{bib:LR}, and \cite{bib:MON}. 

By investigating the limit behavior of the discounted Hamilton-Jacobi equation associated with \eqref{eq:criticalprel}, we deduce that the ergodic equation admits solutions for $c=\mane$ (\Cref{thm:convergence}). Then we construct a specific solution of such an equation which coincides with its Lax-Oleinik evolution. Our interest in such a result is motivated by the fact that we need it to derive a further characterization of the ergodic constant as the minimum of the Lagrangian action on closed measures. As we will show in a forthcoming paper \cite{bib:AUB}, this is a crucial step  to investigate the related Aubry set, on which ergodic solutions have important regularity properties.

We want to point out that the results in the second part of this paper are specific to affine-control systems without drift. Indeed, in the presence of a drift, the existence of a continuous viscosity solution to \eqref{eq:criticalprel} with a noncoercive Hamiltonian remains a challenging problem due to the lack of small time local controllability. For instance, for systems with controlled acceleration, no continuous viscosity solution to the associated ergodic equation has been constructed to this date, even thought the existence of the ergodic constant has been proved (see \cite{bib:PC1}).

We conclude this introduction by comparing our results with the theory developed in \cite{bib:ALE}. The analysis in \cite{bib:ALE} applies to compact manifolds and families of 3-generating vector fields (i.e., a step$-2$ Lie algebra) obtaining the existence of the critical constant and ergodic solution. An ergodic solution is also obtained as a fixed point of the Lax-Oleinik semigroup. Our paper differs from \cite{bib:ALE} in the following ways:
\begin{enumerate}
\item all the results are obtained on a non-compact state space, instead of a compact manifold as in \cite{bib:ALE};
\item the existence of the critical constant is proved for general control systems satisfying a certain controllability assumption, see {\bf (GULC)}, which includes both drift-less sub-Riemannian systems and linear control systems which are not 3-generating;
\item our analysis of the Lax-Oleinik semigroup is developed on spaces of unbounded continuous functions as is natural when working on unbounded configuration space;
\item our technic, based on a dynamical approach, is completely different from the method of \cite{bib:ALE}, which uses properties of the optimal cost requiring compactness of the state space. 
\end{enumerate}

% However, unlike \cite{bib:ALE}, the main goal of this work is to describe the asymptotic behavior of the solution to Hamilton-Jacobi equation arising from optimal control problems. Hence, as we show in \Cref{thm:existencelimit} such long time behavior is captured by the critical constant $\mane$. Note that considering sub-Riemannian system with drift our result cover the case also of not $3$-generating distributions as shown in \Cref{ex:exam}. Moreover, in case of compact manifold such \Cref{thm:existencelimit} holds without assuming the existence of a compact attractor for minimizing trjectories.}

%we work on a noncompact state space equipped with a general bracket-generating distribution.

\medskip\medskip

\noindent The paper is organized as follows. In \Cref{sec:preliminaries}, we fix the notation used throughout the paper and discuss some preliminaries. In \Cref{sec:Settingsassumptions}, we introduce the model we are interested in and the assumptions that will be in force throughout the paper. In \Cref{sec:compactness}, we derive bounds for optimal trajectories and optimal controls. \Cref{sec:ergodicconstant} contains  the first main result of this paper: we construct the critical constant $\mane$. In \Cref{sec:Abel}, we prove the existence of a corrector, i.e., a viscosity solution of the ergodic Hamilton-Jacobi equation associated with a family of vector fields which satisfies the Lie Algebra rank condition. In \Cref{sec:LaxOleinik}, we prove the existence of a  corrector which is also a fixed-point of the corresponding Lax-Oleinik evolution. In \Cref{sec:Appendix1}, we give a new formulation of the Abelian-Tauberian Theorem, adapted to the settings of this paper.

\medskip\medskip
{\bf Acknowledgement.} The first author was partly supported by Istituto Nazionale di Alta Matematica (GNAMPA 2020 Research Projects) and by the MIUR Excellence Department Project awarded to the Department of Mathematics, University of Rome Tor Vergata, CUP E83C18000100006. The second author was partly supported by Istituto Nazionale di Alta Matematica (GNAMPA 2020 Research Projects) and University Italo Francese (Vinci Project 2020) n. C2-794. Part of this paper was completed while the second author was visiting in Department of Mathematics of the University of Rome Tor Vergata. The authors would like to express their gratitude to the anonymous reviewers for their careful reading of the manuscript and many insightful comments and suggestions.

\medskip\medskip
{\bf Declarations of interests:} None. 

\section{Preliminaries}
\label{sec:preliminaries}
\subsection{Notation}
We write below a list of symbols used throughout this paper.
\begin{itemize}
	\item Denote by $\mathbb{N}$ the set of positive integers, by $\mathbb{R}^d$ the $d$-dimensional real Euclidean space,  by $\langle\cdot,\cdot\rangle$ the Euclidean scalar product, by $|\cdot|$ the usual norm in $\mathbb{R}^d$, and by $B_{R}$ the open ball with center $0$ and radius $R$.
	\item If $\Lambda$ is a real $d\times d$ matrix, we define the norm of $\Lambda$ by 
\[
\|\Lambda\|=\sup_{|x|=1,\ x\in\mathbb{R}^d}|\Lambda x|.
\]

\item For a Lebesgue-measurable subset $A$ of $\mathbb{R}^d$, we let $\mathcal{L}^{d}(A)$ be the $d$-dimensional Lebesgue measure of $A$ and  $\mathbf{1}_{A}:\mathbb{R}^n\rightarrow \{0,1\}$ be the characteristic function of $A$, i.e.,
\begin{align*}
\mathbf{1}_{A}(x)=
\begin{cases}
1  \ \ \ &x\in A,\\
0 &x \not\in A.
\end{cases}
\end{align*} 
We denote by $L^p(A)$ (for $1\leq p\leq \infty$) the space of Lebesgue-measurable functions $f$ with $\|f\|_{p,A}<\infty$, where   
\begin{align*}
& \|f\|_{\infty, A}:=\esssup_{x \in A} |f(x)|,
\\& \|f\|_{p,A}:=\left(\int_{A}|f|^{p}\ dx\right)^{\frac{1}{p}}, \quad 1\leq p<\infty.
\end{align*}
For brevity, $\|f\|_{\infty}$  and $\|f\|_{p}$ stand for  $\|f\|_{\infty,\mathbb{R}^d}$ and  $\|f\|_{p,\mathbb{R}^d}$ respectively.

\item $C_b(\mathbb{R}^d)$ stands for the function space of bounded uniformly  continuous functions on $\mathbb{R}^d$. $C^{2}_{b}(\mathbb{R}^{d})$ stands for the space of bounded functions on $\mathbb{R}^d$ with bounded uniformly continuous first and second derivatives. 
$C^k(\mathbb{R}^{d})$ ($k\in\mathbb{N}$) stands for the function space of $k$-times continuously differentiable functions on $\mathbb{R}^d$, and $C^\infty(\mathbb{R}^{d}):=\cap_{k=0}^\infty C^k(\mathbb{R}^{d})$. 
 $C_c^\infty(\mathbb{R}^{d})$ stands for the space of functions in $C^\infty(\mathbb{R}^{d})$ with compact support. Let $a<b\in\mathbb{R}$.
  $AC([a,b];\mathbb{R}^d)$ denotes the space of absolutely continuous maps $[a,b]\to \mathbb{R}^d$.
  
  \item For $f \in C^{1}(\mathbb{R}^{d})$, the gradient of $f$ is denoted by $Df=(D_{x_{1}}f, ..., D_{x_{d}}f)$, where $D_{x_{i}}f=\frac{\partial f}{\partial x_{i}}$, $i=1,2,\cdots,d$.
Let $k$ be a nonnegative integer and let $\alpha=(\alpha_1,\cdots,\alpha_d)$ be a multiindex of order $k$, i.e., $k=|\alpha|=\alpha_1+\cdots +\alpha_d$ , where each component $\alpha_i$ is a nonnegative integer.   For $f \in C^{k}(\mathbb{R}^{d})$,
define $D^{\alpha}f:= D_{x_{1}}^{\alpha_{1}} \cdot\cdot\cdot D^{\alpha_{d}}_{x_{d}}f$.
\item Let $\eps > 0$ and let $\xi_{\eps} \in C^{\infty}(\R^{d})$ be a smooth mollifier, that is, 
\begin{equation*}
\supp(\xi^{\eps}) \subset B_{\eps}, \quad \xi^{\eps}(x)\geq 0 \,\, \forall\ x \in \R^{d}, \quad \int_{B_{\eps}}{\xi^{\eps}(x)\ dx} = 1.	
\end{equation*}
\end{itemize}

\subsection{Control of nonholonomic systems}
We recall the notion of nonholonomic control systems and sub-Riemannian distance on $\R^{d}$, see \cite{bib:MON, bib:LR, bib:ABB}.

A class of nonholonomic drift-less systems on $\R^{d}$ is a control system of the form
\begin{equation}\label{eq:preliminary}
	\dot\gamma(t)=\sum_{i=1}^{m}{f_{i}(\gamma(t))u_{i}(t)}, \quad t \in [0,+\infty)
\end{equation}
Such a system induces a distance on $\R^{d}$ in the following way. First, we define the sub-Riemannian metric to be the function $g: \R^{d} \times \R^{m} \to \R \cup \{ \infty\}$ given by
\begin{equation*}
g(x,v)=\inf\left\{\sum_{i =1}^{m}{u_{i}^{2}: v=\sum_{i=1}^{m}{f_{i}(x)u_{i}}}\right\}.	
\end{equation*}
If $v \in \text{span}\{f_{1}(x), \dots, f_{m}(x)\}$ then the infimum is attained at a unique value $u_{x} \in \R^{m}$ and $g(x,v)=| u_{x}|^{2}$. Then, since $g(\gamma(t), \dot\gamma(t))$ is measurable, being the composition of the lower semicontinuous function $g$ with a measurable function, we can define the length of an absolutely continuous curve $\gamma:[0,1] \to \R^{d}$ as
\begin{equation*}
\text{length}(\gamma)=\int_{0}^{1}{\sqrt{g(\gamma(t), \dot\gamma(t))}\ dt}.	
\end{equation*}
%Note that since $\sqrt{g(x,v)}$ is homogenous of degree $1$ with respect to the second variable we obtain that the length of any absolutely continuous curve is independent of $T$.
 In conclusion, one defines the sub-Riemannian distance as
 \begin{equation*}
 d_{\text{SR}}(x,y)=\inf_{(\gamma, u) \in \Gamma_{0,1}^{x \to y}}\text{length}(\gamma)	
 \end{equation*}
where $\Gamma_{0,1}^{x \to y}$ denotes the set of all trajectory-control pairs such that $u \in L^{2}(0,1; \R^{m})$, $\gamma$ solves \eqref{eq:preldyn} for such a control $u$, $\gamma(0)=x$ and $\gamma(1)=y$. Following \cite{bib:CR} it is possible to represent the sub-Riemannian distance as follows
\begin{equation}\label{eq:subriem}
	d_{\SR}(x,y) = \inf\left\{T >0: \exists\ (\gamma, u) \in \Gamma_{0,T}^{x \to y},\ |u(t)| \leq 1\ \text{a.e.}\ t \in [0,T] \right\}
\end{equation}
for any $x$, $y \in \R^{d}$. Moreover, again from \cite{bib:CR} the sub-Riemannian distance can be characterised in terms of the sub-Riemannian energy: setting
\begin{equation*}
e_{\SR}(x,y)=\inf_{(\gamma,u) \in \Gamma_{0,1}^{x\to y}} \int_{0}^{1}{g(\gamma(t), \dot\gamma(t))\ dt},
\end{equation*}
one can prove that 
\begin{equation}\label{eq:energy}
	d_{\SR}(x,y)=\sqrt{e_{\SR}(x,y)}
\end{equation}
(see, for instance, \cite[Lemma 11]{bib:CR}). 
\begin{remarks}\em
Note that since $g$ is homogenous of degree 1, the sub-Riemannian distance and the sub-Riemannian energy function can be equivalently defined by
\begin{equation*}
d_{\SR}(x,y)=\inf_{(\gamma, u) \in \Gamma_{0,t}^{x \to y}}\text{length}(\gamma)
\end{equation*}
and
\begin{equation*}
e_{\SR}(x,y)=\inf_{(\gamma,u) \in \Gamma_{0,t}^{x\to y}} \int_{0}^{t}{g(\gamma(s), \dot\gamma(s))\ ds},
\end{equation*}
for any $t \geq 0$. 
\end{remarks}

In this paper, we are interested in the controllability properties of system \eqref{eq:preliminary}. For such a system, controllability can be obtained by using the Lie algebra generated by $f_{1}$, $\dots$, $f_{m}$, which is defined as follows. Set
 \begin{equation}\label{eq:delta1}
 \Delta^{1} = \text{span}\{f_{1}, \dots, f_{m}\}	
 \end{equation}
and, for any integer $s \geq 1$,
\begin{equation}\label{eq:delta2}
\Delta^{s+1}= \Delta^{s} + [\Delta^{1}, \Delta^{s}]	
\end{equation}
where $[\Delta^{1}, \Delta^{s}]:=\text{span}\{[X, Y]: X \in \Delta^{1}, Y \in \Delta^{s}\}$. The  Lie algebra generated by $f_{1},\dots,f_{m}$ is defined as
\begin{equation*}
\text{Lie}(f_{1}, \dots, f_{m})=\bigcup_{s \geq 1}\Delta^{s}.	
\end{equation*}
We say that system \eqref{eq:preliminary} satisfies {\it Chow's condition} if $\text{Lie}(f_{1}, \dots, f_{m})(x)=\R^{d}$ for any $x \in \R^{d}$, where $\text{Lie}(f_{1}, \dots, f_{m})(x) = \{X(x): X \in \text{Lie}(f_{1}, \dots, f_{m})\}$. Equivalently, for any $x \in \R^{d}$ there exists an integer $r \geq 1$ such that $\Delta^{r} (x)=\R^{d}$. The minimum integer with such a property is called the degree of nonholonomy at $x$ and will be denoted by $r(x)$. Chow's condition is also known as the {\it Lie algebra rank condition} (LARC) in control theory and as the {\it H\"ormander condition} in the context of PDEs. 
\begin{example}\label{ex:examples}\em
The following are two well-known examples of sub-Riemannian systems for which Chow's condition holds true.
	\begin{itemize}
		\item[($i$)] {\bf Heisenberg group:} We consider the system in $\R^{3}$
\begin{align*}
\begin{cases}
	\dot{x}(t) &=u(t),
	\\
	\dot{y}(t) &=v(t),
	\\
	\dot{z}(t) &=u(t)y(t)-v(t)x(t) 
\end{cases}	
\end{align*}
In this case, the matrix of the system is given by
\begin{equation*}
A(x,y,z)=
\begin{bmatrix}
	1 & 0 \\
	0 & 1 \\
	y & -x
\end{bmatrix}
\end{equation*}	
and the columns of such matrix satisfy the H\"ordmander condition: $X_{1} = (1,0,y)$, $X_{2}=(0,1,-x)$ and $[X_{1}, X_{2}]=(0,0,2)$ generate $\R^{3}$. 
		\item[($ii$)] {\bf Grushin type systems:} Consider a control system of the form
\begin{align*}
\begin{cases}
	\dot{x}(t) &=u(t),
	\\
	\dot{y}(t) &=\varphi(x(t))v(t)
\end{cases}	
\end{align*}
for a nonzero continuous function $\varphi(x)$ with sub-linear growth. The classical Grushin system in $\R^{2}$ is obtained taking $\varphi(x)=x$. Then, the dynamics is given by the matrix
\begin{equation*}
A(x,y)=
\begin{bmatrix}
	1 & 0 \\
	0 & x
\end{bmatrix}
\end{equation*}	
		whose columns satisfy the H\"ordmander condition: $X_{1}=(1,0)$ and $[X_{1}, X_{2}]=(0,1)$ generates $\R^{2}$. \qed
	\end{itemize}
\end{example}

\begin{theorem}[{\bf Chow-Rashevsky theorem, \cite[Theorem 3.1.8]{bib:COR}}]
	If system \eqref{eq:preliminary} satisfies Chow's condition, then any two points in $\R^{d}$ can be joined by a trajectory satisfying \eqref{eq:preliminary}. 
\end{theorem}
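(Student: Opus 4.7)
The plan is to fix $x \in \R^{d}$ and show that the set $R(x)$ of endpoints reachable from $x$ by admissible trajectory-control pairs (of any time horizon) equals $\R^{d}$. Since $\R^{d}$ is connected, it suffices to establish that $R(x)$ is nonempty (trivial, as $x\in R(x)$), open, and closed.

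For openness, I would reduce to piecewise constant controls taking values in $\{\pm e_{1},\dots, \pm e_{m}\}$: on a subinterval where $u\equiv \epsilon e_{i}$ with $\epsilon\in\{\pm1\}$, the trajectory is exactly the flow $\phi_{t}^{\epsilon f_{i}}$, and concatenating such pieces produces endpoints of the form $\phi^{\epsilon_{N} f_{i_{N}}}_{t_{N}}\circ\cdots\circ\phi^{\epsilon_{1} f_{i_{1}}}_{t_{1}}(x)$. The core analytic input is the commutator formula
$$
\phi_{-s}^{Y}\circ\phi_{-s}^{X}\circ\phi_{s}^{Y}\circ\phi_{s}^{X}(x) \;=\; x+s^{2}[X,Y](x)+O(s^{3})\qquad (s\to 0),
$$
which, iterated, allows one to realize, up to higher order, any direction in $\Delta^{k}$ as a one-parameter family of flow compositions. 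Chow's condition supplies iterated brackets $X_{1},\dots,X_{d}\in\mathrm{Lie}(f_{1},\dots,f_{m})$ with $X_{1}(x),\dots,X_{d}(x)$ a basis of $\R^{d}$; these produce a $C^{1}$ map
$$
\Phi : U \longrightarrow \R^{d},\qquad \Phi(\tau_{1},\dots,\tau_{d}) \;=\; \Psi_{\tau_{d}}^{d}\circ\cdots\circ\Psi_{\tau_{1}}^{1}(x),
$$
where $U$ is a neighborhood of $0$ in $\R^{d}$ and $\Psi_{\tau}^{j}$ is the concatenation of flows that approximates the $X_{j}$-direction in a single parameter $\tau$ (rescaled to absorb the fractional power coming from the commutator expansion). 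By construction $\Phi(0)=x$ and $D\Phi(0)=[X_{1}(x)\mid\cdots\mid X_{d}(x)]$ has full rank $d$, so the inverse function theorem provides an open neighborhood of $x$ contained in $R(x)$.

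For closure, I would exploit the drift-less structure: time-reversal of $(\gamma,u)$, namely $t\mapsto(\gamma(T-t),-u(T-t))$, produces an admissible trajectory from $\gamma(T)$ back to $x$, so $y \in R(x)$ implies $x\in R(y)$; concatenating trajectories then yields $R(x)=R(y)$. Consequently the orbits $\{R(z)\}_{z\in\R^{d}}$ form a partition of $\R^{d}$ into pairwise disjoint open sets. Hence $R(x)$ is also closed (its complement is a union of open orbits), and by connectedness $R(x)=\R^{d}$, as required.

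The main obstacle is the flow-composition argument used to realize bracket directions: one must organize the multi-variable Taylor expansion so that the $d$ coordinates $\tau_{j}$ are genuinely independent and the leading term of $D\Phi(0)$ is the expected bracket. This is a delicate but classical construction (carried out, for instance, in Agrachev--Barilari--Boscain or Montgomery), and the smoothness of the $f_{i}$ assumed in the paper makes all the derivatives and remainders well-defined.
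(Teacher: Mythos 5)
Your argument is essentially the classical Chow--Rashevsky proof; the paper itself does not prove this statement but imports it verbatim from \cite[Theorem 3.1.8]{bib:COR}, so there is no internal proof to compare against. Your structure (orbit $R(x)$ is nonempty, open by realizing bracket directions through commutators of flows, closed because the open orbits partition $\R^{d}$ thanks to time-reversal of the drift-less dynamics, then connectedness) is exactly the standard route found in Coron, Montgomery, and Agrachev--Barilari--Boscain, and the time-reversal step is valid here precisely because the system has no drift, so $t\mapsto(\gamma(T-t),-u(T-t))$ is again admissible; note also that only small-time flows are needed for openness, so completeness of the flows is not an issue.

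The one point you should not pass over as quickly as you do is the regularity of $\Phi$. After the fractional reparametrization $\tau\mapsto s=\tau^{1/k}$ needed for a bracket of length $k$, the map $\tau\mapsto\Psi^{j}_{\tau}(y)$ is not obviously $C^{1}$, and the usual inverse function theorem requires more than differentiability at the single point $\tau=0$. There are two standard fixes, and you should commit to one: either (a) check that, because the composition $F(s,y)$ realizing the bracket is smooth with $\partial_{s}^{j}F(0,y)=0$ for $1\leq j<k$ and $\partial_{s}^{k}F(0,y)=k!\,B(y)$, the reparametrized family satisfies $\partial_{\tau}\Psi^{j}_{\tau}(y)=B(y)+O(|\tau|^{1/k})$ uniformly on compacta (with the reversed-order composition handling $\tau<0$), so $\Phi$ is genuinely $C^{1}$ near $0$ and the inverse function theorem applies; or (b) avoid the issue altogether by using only continuity plus differentiability at the origin with invertible differential and conclude that $\Phi(U)$ contains a neighborhood of $x$ via a Brouwer-degree (invariance of domain) argument. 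Alternatively, the proof in \cite[Theorem 3.1.8]{bib:COR} can be organized around the maximal-rank trick: take a composition of flows of the $f_{i}$ whose differential has maximal rank $k$ at some parameter; if $k<d$ the image is locally a $k$-dimensional submanifold to which every $f_{i}$, hence every iterated bracket, is tangent, contradicting $\mathrm{Lie}(f_{1},\dots,f_{m})(x)=\R^{d}$. Any of these closes the argument; as written, the claim that $\Phi$ is $C^{1}$ is asserted rather than proved, which is the only substantive gap.
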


Besides controllability, another important consequence of Chow's condition is the well-known Ball-Box Theorem, see for instance \cite[Theorem 10.67]{bib:ABB}. Of particular interest to us is a corollary of such a theorem which gives  H\"older equivalence between the Euclidean distance and the sub-Riemannian one. 
First, we observe that for any $x \in \R^{d}$ a continuity argument ensures the existence of a neighborhood $U_{x}$ of $x$ such that 
\begin{equation}\label{eq:compactdegree}
	\Delta^{r(x)}(y)=\R^{d}, \quad \forall\ y \in U_{x}.
\end{equation}
Thus, given a compact set $\K$ there exists a finite cover given by $\{U_{x_{i}}\}_{i=1, \dots, N}$ and a set of integers $\{r(x_{i})\}_{i=1, \dots, N}$ such that \eqref{eq:compactdegree} holds on $U_{x_{i}}$ with $r(x)=r(x_{i})$. Taking $$r=\displaystyle{\max_{i=1, \dots, N}} r(x_{i})$$ we obtain 
\begin{equation}\label{eq:comp1}
\Delta^{r}(y)=\R^{d}, \quad \forall\ y \in \K.	
\end{equation}
We call degree of nonholonomy of $\K$ the minimum integer such that \eqref{eq:comp1} holds true and we denote it by $r(\K)$. Moreover, we recall that a family of vector fields $\{f_{i}\}_{i=1, \dots, m}$ is an equi-regular distribution on $\R^{d}$ if there exists $r_{0} \geq 1$ such that $\Delta^{r_{0}}(x)=\R^{d}$ for any $x \in \R^{d}$.

\begin{corollary}\label{cor:ballbox}
	For any compact set $\mathcal{K} \subset \R^{d}$ there exist two constants $\tilde{c}_{1}$, $\tilde{c}_{2} > 0$ such that
	\begin{equation}\label{eq:distancequivalence}
\widetilde{c}_{1}|x-y| \leq d_{\SR}(x,y) \leq \widetilde{c}_{2}|x-y|^\frac{1}{r(\mathcal{K})}, \quad \forall\ x,y \in \mathcal{K}.	
\end{equation} 
\end{corollary}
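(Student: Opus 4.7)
The plan is to handle the two inequalities separately, as they rely on different ingredients.

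For the lower bound $\tilde{c}_{1}|x-y|\le d_{\SR}(x,y)$, I would use the characterization \eqref{eq:subriem}. Fix a compact neighborhood $\mathcal{K}'$ of $\mathcal{K}$ and set $M:=\max_{i=1,\dots,m}\sup_{z\in\mathcal{K}'}|f_{i}(z)|$, which is finite by continuity of the $f_{i}$. Any admissible pair $(\gamma,u)\in\Gamma_{0,T}^{x\to y}$ with $|u(t)|\le 1$ a.e.\ and $\gamma([0,T])\subset\mathcal{K}'$ satisfies $|\dot\gamma(t)|\le \sqrt{m}\,M$ by Cauchy--Schwarz, hence $|x-y|\le \sqrt{m}\,M\,T$. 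I would pick $\delta>0$ so small that any such curve with $T\le\delta$ starting in $\mathcal{K}$ cannot escape $\mathcal{K}'$; then for $d_{\SR}(x,y)<\delta$ one gets $|x-y|\le \sqrt{m}\,M\,d_{\SR}(x,y)$, while for $d_{\SR}(x,y)\ge\delta$ the bound $|x-y|\le\operatorname{diam}(\mathcal{K})\le(\operatorname{diam}(\mathcal{K})/\delta)\,d_{\SR}(x,y)$ is automatic. This yields $\tilde{c}_{1}$.

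The upper bound $d_{\SR}(x,y)\le\tilde{c}_{2}|x-y|^{1/r(\mathcal{K})}$ is the core of the statement and where the Ball-Box theorem enters. For each $x_{0}\in\mathcal{K}$ it produces a neighborhood $U_{x_{0}}$ of $x_{0}$ and a constant $c_{x_{0}}>0$ such that $d_{\SR}(x,y)\le c_{x_{0}}|x-y|^{1/r(x_{0})}$ for all $x,y\in U_{x_{0}}$. By compactness, $\mathcal{K}$ admits a finite subcover $\{U_{x_{i}}\}_{i=1}^{N}$ with Lebesgue number $\rho>0$. I would then split into two regimes. If $|x-y|<\min\{\rho,1\}$, both endpoints lie in a common $U_{x_{i}}$, and since $r(x_{i})\le r(\mathcal{K})$ and the base does not exceed $1$ one has $|x-y|^{1/r(x_{i})}\le|x-y|^{1/r(\mathcal{K})}$, so the local estimate upgrades to the desired exponent. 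If instead $|x-y|\ge\min\{\rho,1\}$, the sub-Riemannian diameter $D:=\sup_{x,y\in\mathcal{K}}d_{\SR}(x,y)$ is finite (by Chow--Rashevsky together with the local H\"older estimate just obtained), and one concludes $d_{\SR}(x,y)\le D\le D\cdot\min\{\rho,1\}^{-1/r(\mathcal{K})}|x-y|^{1/r(\mathcal{K})}$.

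The delicate point, and the one I expect to require the most care, is the interface between the pointwise Ball-Box estimate, which carries the local exponent $1/r(x_{0})$, and the uniform exponent $1/r(\mathcal{K})$ requested in the statement: the trade is admissible only because $r(x_{0})\le r(\mathcal{K})$ and the base $|x-y|$ is kept below $1$, which is why the small-distance threshold must be taken as $\min\{\rho,1\}$ rather than just $\rho$. A secondary, rather than conceptual, obstacle is ensuring that the sub-Riemannian diameter of $\mathcal{K}$ is finite, but this can be obtained either by a Lebesgue-number chaining of the local Ball-Box inequalities along a Euclidean segment or by invoking the continuity of $d_{\SR}$ on $\mathcal{K}\times\mathcal{K}$.
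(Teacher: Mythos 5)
Your argument is correct: the Lipschitz lower bound via the time-optimal characterization \eqref{eq:subriem} plus a Gronwall confinement estimate, and the upper bound via a finite cover of $\mathcal{K}$ by Ball-Box neighborhoods with Lebesgue number $\rho$, the exponent comparison $|x-y|^{1/r(x_{i})}\leq|x-y|^{1/r(\mathcal{K})}$ (legitimate since $r(x_{i})\leq r(\mathcal{K})$ and $|x-y|\leq 1$), and boundedness of $d_{\SR}$ on $\mathcal{K}\times\mathcal{K}$ in the large-distance regime. This is essentially the route the paper itself relies on: the corollary is stated there without proof as a standard consequence of the Ball-Box theorem, with $r(\mathcal{K})$ defined through exactly the finite-cover construction you use, so your write-up simply makes that implicit argument explicit.
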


Furthermore, we recall that the topology induced by $(\R^{d}, d_{\SR})$ coincides with the topology induced by the Euclidean distance on $\R^{d}$ (\cite[Theorem 3.31]{bib:ABB}). In particular, from this result, we obtain that a set is compact in $(\R^{d}, d_{\SR})$ if and only if it is compact in $\R^{d}$ w.r.t. Euclidean distance.

\section{Settings and assumptions}
\label{sec:Settingsassumptions}

Let $f: \R^{d} \times \R^{m} \to \R$ be a continuous function satisfying the following. 
\begin{itemize}
\item[{\bf (F0)}] There exists a constant $c_{f} \geq 0$ such that 
\begin{equation*}
|f(x,u) - f(y,u)| \leq c_{f}(1+|u|)|x-y|, \quad  \forall\ (x,u) \in \R^{d} \times \R^{m}
\end{equation*}
and
\begin{equation*}
|f(x,u)| \leq c_{f}(1+|u|)(1+|x|), \quad \forall\ (x,u) \in \R^{d} \times \R^{m}. 
\end{equation*}
\end{itemize}
We consider the control system 
\begin{equation}\label{eq:dynamics}
\dot\gamma(t)=f(\gamma(t), u(t)), \quad t \in [0, \infty). 
\end{equation}
%	For $m \in \N$ and $i=1, \dots, m$, let 
%	\begin{equation*}
%		f_{i}: \R^{d} \to \R^{d} 
%	\end{equation*}
%	and
%	\begin{equation*}
%		u_{i}:[0,\infty) \to \R
%	\end{equation*}
%be smooth vector fields and measurable controls, respectively, and consider the following controlled dynamics of sub-Riemannian type
%	\begin{align}\label{eq:dynamics}
%         \dot\gamma(t)= \displaystyle{\sum_{i=1}^{m}{f_{i}(\gamma(t))u_{i}(t)}}= F(\gamma(t))U(t), \quad t \in [0,+\infty)
%	\end{align}
%	where $F(x)=[f_{1}(x)| \dots| f_{m}(x)]$ is an $d \times m$ real matrix and $U(t)=(u_{1}(t), \dots, u_{m}(t))^{\star}$\footnote{$(u_{1}, \dots, u_{m})^{\star}$ denotes the transpose of $(u_{1}, \dots, u_{m})$}. 
	%We call $(\gamma, u)$ a trajectory-control pair if $u \in L^{2}_{\text{loc}}(0,\infty; \R^{m})$ and $\gamma$ is the solution of \eqref{eq:dynamics} associated with $u$. 
For any $s_{0}$, $s_{1} \in \R$ such that $s_{0} < s_{1}$ and $x$, $y \in \R^{d}$ we set
	\begin{align*}\label{eq:controlnotation}
	\begin{split}
	\Gamma_{s_{0}, s_{1}}^{x \to} & =\{(\gamma, u) \in \text{AC}([s_{0}, s_{1}]; \R^{d}) \times L^{2}(s_{0}, s_{1}; \R^{m}): \dot\gamma(t)=f(\gamma(t), u(t)),\,\, \gamma(s_{0})=x\},
	\\
	 \Gamma_{s_{0}, s_{1}}^{\to y} & =\{(\gamma, u) \in \text{AC}([s_{0}, s_{1}]; \R^{d}) \times L^{2}(s_{0}, s_{1}; \R^{m}): \dot\gamma(t)=f(\gamma(t), u(t)),\,\, \gamma(s_{1})=y\},
	 \\
	 \Gamma_{s_{0}, s_{1}}^{x \to y} & = \Gamma_{s_{0}, s_{1}}^{x \to} \cap \Gamma_{s_{0}, s_{1}}^{\to y}.
	 \end{split}
	\end{align*}
	%where $\gamma_{u}$ denotes a solution of \eqref{eq:dynamics} associated with the control $u$. 
%
%Throughout the paper we assume the vector fields $f_{i}$ to satisfy the following:
%\begin{itemize}
%\item[({\bf F0})] there exists an integer $r_{0} \geq 1$ such that $f_{i} \in C^{r_{0}-1}(\R^{d})$, for any $i=1,\dots, m$, and 
%\begin{equation}\label{eq:rnonholonomy}
%\Delta^{r_{0}}(x)=\R^{d}, \quad \forall x \in \R^{d};
%\end{equation}
%\item[({\bf F1})] there exists a constant $c_{f} \geq 1$ such that for any $i=1,\dots, m$
%\begin{equation}\label{eq:fassum}
%|f_{i}(x)| \leq c_{f}(1+|x|), \quad \forall x \in \R^{d}.	
%\end{equation}
%\end{itemize}
%In literature, under assumption {\bf (F0)} the distribution $\{f_{i}\}_{i=1,\dots,m}$ is called regular. 
% that there exists a constant $c_{f} \geq 1$ such that for any $i=1, \dots, m$
% \begin{equation}\label{eq:fassum}
%	|f_{i}(x)| \leq c_{f}(1+|x|), \quad \forall\ x \in \R^{d}. 
%\end{equation}
%	 Set 
%	 \begin{equation*}
%	 	\mathcal{X}=\{f_{1}, \dots, f_{m}\}
%	 \end{equation*}
%and assume this family of vector fields satisfies the Lie Algebra Rank Condition ({\it LARC}), that is
%\begin{equation*}
%\text{Lie}(\X)(x)=\R^{d}, \quad \forall\ x \in \R^{d}. 
%\end{equation*}
%Moreover, we assume that $\X$ is an equiregular distribution on $\R^{d}$, i.e., there exists a uniform degree of nonholonomy $r_{0}$ such that $\Delta^{r_{0}}(x)=\R^{d}$ for any $x \in \R^{d}$. 	

	By {\bf (F0)} and Gronwall's inequality we  get the following estimate for the trajectories of \eqref{eq:dynamics}. 
\begin{lemma}\label{lem:boundedtrajectories}
Let $x \in \R^{d}$, $t \geq 0$ and $(\gamma, u) \in \Gamma_{0,t}^{x \to}$. If $u \in L^{\infty}(0,t; \R^{m})$ then we have that
%then there exists a constant $\la \geq 0$
%$\la(t, |x|, \| u\|_{\infty, [0,t]}) \geq 0$ 
%such that
	\begin{equation*}
	|\gamma(s)| \leq (|x| + c_{f}(1 + \|u\|_{\infty})s)e^{c_{f}(1 + \|u\|_{\infty})s}, \quad \forall\ s \in [0,t].
	\end{equation*}
	\end{lemma}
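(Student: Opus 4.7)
The plan is to apply Gronwall's inequality to the integral form of \eqref{eq:dynamics}. First, since $(\gamma,u)\in \Gamma_{0,t}^{x\to}$ the curve $\gamma$ is absolutely continuous on $[0,t]$ and satisfies
\begin{equation*}
\gamma(s) = x + \int_{0}^{s} f(\gamma(r),u(r))\ dr, \quad s \in [0,t],
\end{equation*}
so, taking moduli, one gets the scalar integral inequality
$|\gamma(s)| \leq |x| + \int_{0}^{s}|f(\gamma(r),u(r))|\ dr$.

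Next, the plan is to invoke the sublinear growth part of {\bf (F0)}, namely $|f(y,v)| \leq c_{f}(1+|v|)(1+|y|)$, together with $|u(r)|\leq \|u\|_{\infty}$ for a.e.\ $r \in [0,t]$, to arrive at
\begin{equation*}
|\gamma(s)| \leq |x| + c_{f}(1+\|u\|_{\infty})\int_{0}^{s}(1+|\gamma(r)|)\ dr = \alpha(s) + \beta\int_{0}^{s}|\gamma(r)|\ dr,
\end{equation*}
where $\alpha(s):=|x|+c_{f}(1+\|u\|_{\infty})s$ is non-decreasing in $s$ and $\beta:=c_{f}(1+\|u\|_{\infty})$. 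The monotone-forcing version of Gronwall's lemma then gives $|\gamma(s)|\leq \alpha(s)e^{\beta s}$, which is precisely the claimed bound.

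There is no substantial obstacle: this is a routine Gronwall argument. The only care needed is in the bookkeeping of constants, since {\bf (F0)} provides the factorised bound $c_{f}(1+|u|)(1+|x|)$ rather than $c_{f}(1+|u|+|x|)$; one must split the resulting integral as the linear-in-$s$ term $c_{f}(1+\|u\|_{\infty})s$ (which enters the monotone forcing term $\alpha$) plus the Gronwall-type term $c_{f}(1+\|u\|_{\infty})\int_{0}^{s}|\gamma(r)|\,dr$, in order to reproduce exactly the right-hand side $(|x|+c_{f}(1+\|u\|_{\infty})s)e^{c_{f}(1+\|u\|_{\infty})s}$ stated in the lemma.
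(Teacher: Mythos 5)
Your argument is correct and matches the paper's approach: the paper proves this lemma exactly by combining {\bf (F0)} with Gronwall's inequality applied to the integral form of \eqref{eq:dynamics}, and your bookkeeping (monotone forcing term $|x|+c_{f}(1+\|u\|_{\infty})s$ plus Gronwall rate $c_{f}(1+\|u\|_{\infty})$) reproduces the stated bound precisely.
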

%Moreover, still from {\bf (F0)} we obtain the following. 

Let the Lagrangian $L: \R^{d} \times \R^{m} \to \R$ be a $C^{2}(\R^{d} \times \R^{m}; \R)$ function satisfying the following. 
\begin{itemize}
%\item[({\bf L0})]  There exists $u^{*} \in \R^{m}$ such that $L(x,u) \geq L(x,u^{*})$ for any $(x,u) \in \R^{d} \times \R^{m}$;
\item[{\bf (L1)}] There exist a non-decreasing function $\beta: [0,\infty) \to \R$ and a constant  $\ell_{1} \geq 0$ such that
\begin{align*}
L(x,u) & \leq\ \beta(|x|)(1+ |u|^{2}), \quad \forall\ (x, u) \in \R^{d} \times \R^{m}
\\
|D_{x}L(x,u)| & \leq\ \ell_{1}(1+|u|^{2}), \quad \forall\ (x, u) \in \R^{d} \times \R^{m}
\\
D^{2}_{u}L(x,u) & \geq\ \frac{1}{\ell_{1}}, \quad \forall\ (x, u) \in \R^{d} \times \R^{m}.
\end{align*}
\item[({\bf L2})] There exists $u^{*} \in \R^{m}$ such that $L(x,u) \geq L(x,u^{*})$ for any $(x,u) \in \R^{d} \times \R^{m}$.
%\begin{equation*}
%L(x,u) \leq \ell_{1}\beta(R)(1+ |u|^{2}), \quad \forall\ (x, u) \in \overline{B}_{R} \times \R^{m}.
%\end{equation*}
\item[({\bf L3})] There exists a compact set $\mathcal{K} \subset \R^{d}$ and a constant $\Theta > 0 $ such that 
	\begin{equation}\label{eq:gap}
		 \inf_{x \in \R^{d} \backslash \mathcal{K}} L(x,u) \geq \Theta  + \min_{x \in \mathcal{K}} L(x,u^{*})
	\end{equation} 
	and there exists $x^{*} \in \argmin_{x \in \K}L(x, u^{*})$ such that
	\begin{equation*}
	f(x^{*}, u^{*})=0.
	\end{equation*}
\end{itemize}

	Note that from the previous assumption on $L$ we obtain 
\begin{equation}\label{eq:L0}
	L(x,u) \geq \frac{1}{2\ell_{1}}|u-u^{*}|^{2} + L(x^{*}, u^{*}), \quad \forall\ (x,u) \in \R^{d} \times \R^{m}.
\end{equation}
%where $x^{*} \in \K_{L}$ is such that
%\begin{equation*}
%L(x^{*},0)=\min_{x \in \K_{L}} L(x,0).	
%\end{equation*}
Let $H: \R^{d} \times \R^{d} \to \R$ be the Hamiltonian associated with $L$, defined by 
\begin{equation}\label{eq:Hamiltonian}
H(x,p)=\sup_{u \in \R^{m}} \big\{\langle p, f(x,u) \rangle - L(x,u) \big\}.
\end{equation}

\begin{definition}
We say that system \eqref{eq:dynamics} is  {\em locally uniformly globally controllable}, a property denoted by {\bf (LUGC)}, if for any $R \geq 0$ there exist $T_{R} \geq 0$ and $C_{R} \geq 0$ such that for any $x$, $y \in \overline{B}_{R}$ there exists $(\gamma_{x \to y}, u_{x \to y}) \in \Gamma_{0,T_{R}}^{x \to y}$ with
\begin{equation*}
\int_{0}^{T_{R}}{|u_{x \to y}(s)|^{2}\ ds} \leq C_{R}. 
\end{equation*}
\end{definition}

Next, we give some examples of control systems satisfying {\bf (L2)} -- {\bf (L3)} and {\bf (LUGC)}.  

\begin{example}[{\bf Linear control case}]\label{ex:exam}\em
Let $\gamma :[0, \infty) \to \R^{d}$,  let $u: [0,\infty) \to \R^{m}$ and let $A$, $B$ be real $n \times n$ and $n \times m$ matrices, respectively. Then, a linear control system has the form 
\begin{equation*}
\dot\gamma(t) = A \gamma(t) + B u(t), \quad t \geq 0. 
\end{equation*}
Assume that the controllability matrix $Q_{T}$, defined by
\begin{equation*}
Q_{T}=\int_{0}^{T}{e^{tA}BB^{\star} e^{tA^{\star}}\ dt}
\end{equation*}
 is positive definite for some (thus, for all) $T > 0$. 
Then, it is well-known that such system is exactly controllable and from \cite[Proposition 1.1]{bib:ZZ} we get that {\bf (LUGC)} is satisfied. The following are specific cases of linear control system satisfying {\bf (LUGC)} for which we can also give an explicit construction of $(x^{*}, u^{*})$ in {\bf (L2)} -- {\bf (L3)}.

{\bf Control of acceleration.} Consider the linear state equation 
\begin{equation*}
\frac{d}{dt} \left(\begin{matrix}
\gamma(t) \\
\dot\gamma(t)
\end{matrix}\right)
=
\left(\begin{matrix}
0 & 1 \\
0 & 0
\end{matrix}\right) 
\left(\begin{matrix}
\gamma(t) \\
\dot\gamma(t)
\end{matrix}\right)
+ \left(\begin{matrix}
0 \\ 
1
\end{matrix}\right)
 \left(\begin{matrix}
0 \\ 
u(t)
\end{matrix}\right), \quad t \geq 0
\end{equation*}
with $\gamma: [0,+\infty) \to \R^{d}$ and $u: [0,+\infty) \to \R^{d}$.
It is well-known that {\bf (LUGC)} is satisfied, see e.g. \cite[Exercise 1.2]{bib:ZZ}  Now, let us consider a Lagrangian $L$ of the form 
\begin{equation*}
L(x,v,u) = \frac{1}{2}|u|^{2} + \frac{1}{2}|v|^{2} + g(x)
\end{equation*}
where $g: \R^{d} \to \R$ is a bounded and continuous function. Then, one can immediately observe that {\bf (L2)} -- {\bf (L3)} holds true for $u^{*}=0$ and $(x^{*}, v^{*}) \in \R^{d} \times \{0\}$ where $g(x^{*})=\displaystyle{\min_{x \in \R^{d}}} g(x)$.

{\bf Controlled harmonic oscillator.} Consider the system 
\begin{equation*}
\frac{d}{dt} \left(\begin{matrix}
\gamma(t) \\
\dot\gamma(t)
\end{matrix}\right)
=
\left(\begin{matrix}
0 & 1 \\
-1 & 0
\end{matrix}\right) 
\left(\begin{matrix}
\gamma(t) \\
\dot\gamma(t)
\end{matrix}\right)
+ \left(\begin{matrix}
0 \\ 
1
\end{matrix}\right)
 \left(\begin{matrix}
0 \\ 
u(t)
\end{matrix}\right), \quad t \geq 0,
\end{equation*}
which is also locally uniformly globally controllable as one can easily check either directly or by Kalman's rank condition. 
Fix $x^{*} \in \R^{d}$ and consider a Lagrangian $L$ of the form 
\begin{equation*}
L(x,y,u)=\frac{1}{2}|u-x^{*}|^{2} + \frac{1}{2}|y|^{2} + g(x)
\end{equation*}
with $g: \R^{d} \to \R$ a continuous function such that $g(x) \geq g(x^{*})$ for any $ x \in \R^{d}$. Then, it is easy to see that {\bf (L2)} -- {\bf (L3)} are satisfied for $(x^{*}, y^{*}) \in \R^{d} \times \{0\}$ and $u^{*}=x^{*}$. 
\end{example}
\begin{example}[{\bf Sub-Riemannian systems}]\em
 For $m \in \N$ and $i=1, \dots, m$, let 
	\begin{equation*}
		f_{i}: \R^{d} \to \R^{d} 
	\end{equation*}
	be smooth vector fields satisfying the Chow's condition. Then,  the sub-Riemannian system
	\begin{align*}
         \dot\gamma(t)= \displaystyle{\sum_{i=1}^{m}{f_{i}(\gamma(t))u_{i}(t)}}, \quad t \geq 0,
	\end{align*}
	where $u_{i} : [0,\infty) \to \R$ are measurable functions, satisfies {\bf (LUGC)} due to the characterization  of the sub-Riemannian distance in \eqref{eq:subriem}. Thus, from the linearity of the state equation w.r.t. $u$ it follows that {\bf (L2)} -- {\bf (L3)} are satisfied for $u^{*}=0$ and any $x^{*} \in \R^{d}$ such that $L(x,0) \geq L(x^{*}, 0)$. 
\end{example}

%Furthermore,  set 
%\begin{equation*}
%	\delta^{*}(x)=d_{\SR}(x,x^{*}), \quad \forall\ x \in \R^{d}
%\end{equation*}
%and  observe that, by \Cref{cor:ballbox}, there exists a nondecreasing function $D: \R_{+} \to \R_{+}$ with
%\begin{equation}\label{eq:ballboxincrease}
%\delta^{*}(x) \leq D(|x|), \quad \forall\ x \in \R^{d}.	
%\end{equation}

\section{Boundedness of optimal trajectories}
\label{sec:compactness}

We consider the following minimization problem: for any $T > 0$ and $x \in \R^{d}$
 \begin{equation}\label{eq:minimization}
 \text{to minimize}\ \int_{0}^{T}{L(\gamma(s), u(s))\ ds}\,\, \text{over all}\ (\gamma, u) \in \Gamma_{0,T}^{x \to}	
 \end{equation}
and we set
\begin{equation}\label{eq:evoValue}
V_{T}(x) = \inf_{(\gamma, u) \in \Gamma_{0,T}^{x \to}} \int_{0}^{T}{L(\gamma(s), u(s))\ ds}, \quad \forall\ x \in \R^{d}.	
\end{equation}

For any $x \in \R^{d}$ we say that a trajectory-control pair $(\gamma, u) \in \Gamma_{0,T}^{x \to}$ is optimal if it solves \eqref{eq:minimization}. For simplicity of notation, we set
\begin{equation*}
\delta^{*}(x)=T_{|x| \vee |x^{*}|}
\end{equation*}
where $T_{|x| \vee |x^{*}|}$ is given by {\bf (LUGC)}.
\begin{remarks}\label{rem:existenceminimizers}\em
\begin{enumerate}
	\item We observe that, by using classical technics from optimal control one can easily obtain the existence of optimal trajectory-control pairs for \eqref{eq:minimization} (see, for instance, \cite[Theorem 7.4.4]{bib:SC}). 
	\item Note that for reversible Lagrangians we have that the function 
	\begin{equation*}
	V_{t}(x) = \inf_{(\gamma, u) \in \Gamma_{0,t}^{x \to}} \int_{0}^{t}{L(\gamma(s), u(s))\ ds}, \quad \forall\ x \in \R^{d}, \, \forall\ t \geq 0	
	\end{equation*}
	solves the Hamilton-Jacobi equation
	\begin{equation*}
	\begin{cases}
	\partial_{t} V_{t}(x) + H(x, D_{x}V_{t}(x))=0, \quad (t, x) \in [0,+\infty) \times \R^{d}
	\\
	V_{0}(x)=0,
	\end{cases}
	\end{equation*}
	in the viscosity sense, where $H$ is defined in \eqref{eq:Hamiltonian}.
	\end{enumerate}
\end{remarks}
In this section, we prove the uniform boundedness of optimal trajectories for \eqref{eq:minimization} starting from a given compact set. We begin by deriving a uniform bound for the Lebsegue measure of all times at which an optimal trajectory may lie outside the compact set $\K$ of assumption {\bf (L3)}.

\begin{proposition}\label{prop:constraint}
	Assume {\bf (F0)}, {\bf (L1)} -- {\bf (L3)} and {\bf (LUGC)}. For any $R \geq 0$ there exists a constant $M_{R} \geq 0$ such that for any $x \in \overline{B}_{R}$, any $T \geq \delta^{*}(x)$, and any optimal pair $(\gamma_{x}, u_{x}) \in \Gamma_{0,T}^{x \to}$ for \eqref{eq:minimization}  we have that 
	\begin{equation}\label{eq:measure1}
		\LL^{1}\left(\{ t \in [0, T] : \gamma_{x}(t) \not\in \K\} \right) \leq M_{R}.
	\end{equation}
	%where $\K_{L}$ is given in assumption {\bf (L3)}.
\end{proposition}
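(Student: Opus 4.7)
The plan is to bracket $V_T(x)$ between two explicit quantities: a lower bound that penalizes time spent outside $\K$, and an upper bound obtained by constructing an explicit competitor that parks at $x^*$.

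First I would derive the lower bound. For any optimal pair $(\gamma_x,u_x)$, assumption {\bf (L2)} gives pointwise $L(\gamma_x(s),u_x(s))\ge L(\gamma_x(s),u^*)$. Then assumption {\bf (L3)} yields $L(\gamma_x(s),u^*)\ge L(x^*,u^*)$ whenever $\gamma_x(s)\in\K$ and $L(\gamma_x(s),u^*)\ge L(x^*,u^*)+\Theta$ whenever $\gamma_x(s)\notin\K$. Splitting the integral $V_T(x)=\int_0^T L\,ds$ according to these two cases produces
\[
V_T(x)\ \ge\ T\,L(x^*,u^*)\ +\ \Theta\cdot \LL^1\bigl(\{t\in[0,T]:\gamma_x(t)\notin\K\}\bigr).
\]

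Next I would build the competitor and compute an upper bound. By {\bf (LUGC)} applied to the pair $x,x^*\in\overline{B}_{|x|\vee|x^*|}$, pick $(\tilde\gamma,\tilde u)\in\Gamma_{0,\delta^*(x)}^{x\to x^*}$ with $\int_0^{\delta^*(x)}|\tilde u|^2\,ds\le C_{|x|\vee|x^*|}$. Concatenate this with the constant pair $(\gamma\equiv x^*,u\equiv u^*)$ on $[\delta^*(x),T]$, which is admissible since $f(x^*,u^*)=0$. The resulting pair belongs to $\Gamma_{0,T}^{x\to}$, so by optimality
\[
V_T(x)\ \le\ \int_0^{\delta^*(x)} L(\tilde\gamma,\tilde u)\,ds\ +\ (T-\delta^*(x))\,L(x^*,u^*).
\]
To bound the first integral uniformly in $x\in\overline{B}_R$, I would first use {\bf (F0)} together with Cauchy--Schwarz (so that $\int_0^{\delta^*(x)}(1+|\tilde u|)\,ds$ is controlled by $\delta^*(x)$ and $\sqrt{C_{|x|\vee|x^*|}}$) and Gronwall's inequality to produce a constant $M'_R$ with $|\tilde\gamma(s)|\le M'_R$ on $[0,\delta^*(x)]$ for every $x\in\overline{B}_R$. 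Then the first bound in {\bf (L1)} gives
\[
\int_0^{\delta^*(x)}L(\tilde\gamma,\tilde u)\,ds\ \le\ \beta(M'_R)\bigl(\delta^*(x)+C_{|x|\vee|x^*|}\bigr)\ =:\ K_R.
\]

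Combining the two bounds I get $\Theta\cdot\LL^1(\{\gamma_x\notin\K\})\le K_R-\delta^*(x)\,L(x^*,u^*)$, which depends only on $R$, and setting $M_R:=\Theta^{-1}\bigl(K_R+\delta^*(x)\,|L(x^*,u^*)|\bigr)$ (using the $R$-uniform bound on $\delta^*(x)$ for $x\in\overline{B}_R$) delivers the claim. The main technical obstacle I anticipate is the estimate of $|\tilde\gamma|$ on $[0,\delta^*(x)]$: Lemma \ref{lem:boundedtrajectories} assumes $\tilde u\in L^\infty$, whereas {\bf (LUGC)} only supplies $L^2$ control, so I must redo the Gronwall argument with sublinear-in-$u$ growth and a Cauchy--Schwarz step to convert the $L^2$ bound on $\tilde u$ into an $L^1$ bound over the (finite, $R$-uniform) interval $[0,\delta^*(x)]$.
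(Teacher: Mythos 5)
Your proposal is correct and follows essentially the same route as the paper: sandwich $V_T(x)$ between the lower bound $T\,L(x^*,u^*)+\Theta\,\LL^1(\{\gamma_x\notin\K\})$ obtained from {\bf (L2)}--{\bf (L3)} and the upper bound produced by the {\bf (LUGC)} competitor that reaches $x^*$ in time $\delta^*(x)$ and then sits there with $u\equiv u^*$. Your extra care on the Gronwall step (redoing it with Cauchy--Schwarz because {\bf (LUGC)} only gives an $L^2$ bound on the control, whereas \Cref{lem:boundedtrajectories} is stated for $L^\infty$ controls) is a sound refinement of a point the paper glosses over by citing that lemma directly.
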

\noindent {\it Proof.} Fix $R \geq 0$ and let $x \in \overline{B}_{R}$. Let $(\bar\gamma_{x}, \bar{u}_{x}) \in \Gamma_{0,\delta^{*}(x)}^{x \to x^{*}}$ be given by {\bf (LUGC)}. Define the control 
\begin{align*}
	\widehat{u}_{x}(t)=
	\begin{cases}
		\bar{u}_{x}(t), & \quad t \in [0, \delta^{*}(x)]
		\\
		u^{*}, & \quad t \in (\delta^{*}(x), T].
	\end{cases}
\end{align*}
Then,  $(\widehat\gamma_{x}, \widehat{u}_{x}) \in \Gamma_{0,T}^{x \to x^{*}}$ and we obtain
\begin{align}\label{eq:ineq1}
\begin{split}
V_{T}(x) \leq & \int_{0}^{\delta^{*}(x)}{L(\bar\gamma_{x}(t), \bar{u}_{x}(t))\ dt} + 	(T-\delta^{*}(x))L(x^{*}, u^{*})
\\
=:\ & c_{1}(x,\delta^{*}(x)) + (T-\delta^{*}(x))L(x^{*},u^{*}).
\end{split}
\end{align}
Let us estimate $c_{1}(x, \delta^{*}(x))$. Recall that from {\bf (LUGC)} we have that $\|\bar{u}_{x} \|^{2}_{2, [0,\delta^{*}(x)]} \leq C_{R}$. So, \Cref{lem:boundedtrajectories} ensures that 
\begin{equation}\label{eq:control}
|\bar\gamma_{x}(t)| \leq (R + c_{f}(1+C_{R})\delta^{*}(R))e^{c_{f}(1+C_{R})\delta^{*}(R)} =: \la(R), \quad \forall\ t \in [0,\delta^{*}(x)].
\end{equation}
Therefore, by assumption {\bf (L2)} we deduce that
\begin{equation*}
	\int_{0}^{\delta^{*}(x)}{L(\bar\gamma_{x}(t), \bar{u}_{x}(t))\ dt} \leq \beta(\la(R))(1+C_{R})%\ell_{2} + \ell_{1}C_{R} + \ell_{3}\la(R)^{\beta}.
\end{equation*}
Now, let $(\gamma_{x}, u_{x}) \in \Gamma_{0,T}^{x \to}$ be optimal for \eqref{eq:minimization}. Then, we have that
\begin{align}\label{eq:ineq2}
\begin{split}
 \quad \quad & V_{T}(x) = \int_{0}^{T}{L(\gamma_{x}(t), u_{x}(t))\ dt} % \geq \int_{0}^{T}{L(\gamma_{x}(t), 0)\ dt} 
 \\
 \geq\ & \  \int_{0}^{T}{L(\gamma_{x}(t), u_{x}(t)){\bf 1}_{\K}(\gamma_{x}(t))\ dt}  
+  \int_{0}^{T}{L(\gamma_{x}(t), u_{x}(t)){\bf 1}_{\K^{c}}(\gamma_{x}(t))\ dt}
 \\
 \geq\ &  L(x^{*}, u^{*}) \mathcal{L}^{1}\big(\{t \in [0,T]:\ \gamma_{x}(t) \in \K\}\big) 
 \\
 +\ & \left(\inf_{x \in \R^{d} \backslash \K} L(x,u^{*})\right)\mathcal{L}^{1}\big( \{t \in [0,T]:\ \gamma_{x}(t) \not\in \K\}\big) 
 \\
=\ &  L(x^{*}, u^{*}) \mathcal{L}^{1}\big(\{t \in [\delta^{*}(x),T]:\ \gamma_{x}(t) \in \K\}\big) 
 \\
 +\ & \left(\inf_{x \in \R^{d} \backslash \K} L(x,u^{*})\right)\left(T -\mathcal{L}^{1}\big( \{t \in [\delta^{*}(x),T]:\ \gamma_{x}(t) \in \K\}\big)\right).
 \end{split}
\end{align}
%From \eqref{eq:ineq2} it also follows that 
%\begin{equation*}
% V_{T}(x) \geq c_{2}(x,\delta^{*}(x)) + L(x^{*}, u^{*})(T-\delta^{*}(x)),
%\end{equation*}
%which, together with \eqref{eq:ineq1}, yields 
%\begin{equation}\label{eq:delta}
%c_{2}(x,\delta^{*}(x)) \leq c_{1}(x,\delta^{*}(x)) \leq C_{1}(R).	
%\end{equation}
%Hence, we have that 
%\begin{equation*}
%c_{3}(\delta^{*}(x)) := c_{1}(\delta^{*}(x)) - c_{2}(\delta^{*}(x)) \geq 0, \quad c_{3}(\delta^{*}(x)) \leq C_{1}(R).
%\end{equation*}
So, combining \eqref{eq:ineq1} and \eqref{eq:ineq2}  and recalling {\bf (L3)}, we deduce that 
\begin{align*}
& \beta(\la(R))(1+C_{R})\
\\
\geq\ & \Big(\inf_{x \in \R^{d} \backslash \K} L(x,u^{*}) - L(x^{*}, u^{*})\Big)\ \left(T - \mathcal{L}^{1}\big( \{t \in [0,T] :\ \gamma_{{u}}(t) \in \K \} \big) \right) + \delta^{*}(x)L(x^{*}, u^{*})
 \\
 \geq\ & \Theta \mathcal{L}^{1}\big( \{t \in [0,T] :\ \gamma_{x}(t) \not\in \K \}\big) + \delta^{*}(x)L(x^{*}, u^{*}).
\end{align*}
Therefore, 
\begin{equation*}
\mathcal{L}^{1}\big(\{ t \in [0,T]:\ \gamma_{x}(t) \not\in \K \} \big) \leq \frac{\beta(\la(R))(1+C_{R}) - \delta^{*}(x)L(x^{*}, u^{*})}{\Theta} . 
\end{equation*}
%where, in view of \eqref{eq:delta} we have that
%\begin{equation*}
%c_{3}(\delta^{*}(x)) := c_{1}(\delta^{*}(x)) - c_{2}(\delta^{*}(x)) \geq 0, \quad c_{3}(\delta^{*}(x)) \leq C_{1}(R).	
%\end{equation*}
Setting 
\begin{equation*}
	M_{R}:=\frac{\beta(\la(R))(1+C_{R})-\delta^{*}(R) L(x^{*}, u^{*})}{\Theta} 
\end{equation*}
we obtain the conclusion. \qed

\begin{theorem}\label{thm:boundednesstraj}
	Assume {\bf (F0)}, {\bf (L1)} -- {\bf (L3)} and {\bf (LUGC)}. For any $R \geq 0$ there exist two constants $P_{R}$, $Q_{R} \geq 0$ such that for any $x \in \overline{B}_{R}$, any $T \geq \delta^{*}(x)$, and any optimal pair $(\gamma_{x}, u_{x}) \in \Gamma_{0,T}^{x \to}$ for \eqref{eq:minimization} we have that
	\begin{equation}\label{eq:l2bounds}
	\int_{0}^{T}{|u_{x}(t)-u^{*}|^{2}\ dt} \leq P_{R}
	\end{equation}
	and
	\begin{equation}\label{eq:compactnesstrajectory}
	|\gamma_{x}(t)| \leq Q_{R}, \quad \forall\ t \in [0,T].	
	\end{equation}	
\end{theorem}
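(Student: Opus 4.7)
The plan is to derive \eqref{eq:l2bounds} first by a comparison argument and then bootstrap it, together with \Cref{prop:constraint}, into the pointwise trajectory bound \eqref{eq:compactnesstrajectory}.

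\textbf{Step 1 (The $L^2$ bound on $u_x - u^*$).} I would reuse the test competitor constructed in the proof of \Cref{prop:constraint}: the concatenation of the connector $(\bar\gamma_x,\bar u_x)\in\Gamma_{0,\delta^*(x)}^{x\to x^*}$ provided by {\bf (LUGC)} with the equilibrium control $u^*$ on $[\delta^*(x),T]$, which stays at $x^*$ since $f(x^*,u^*)=0$. As computed in \eqref{eq:ineq1}, this yields
\[
V_T(x)\ \le\ c_1(x,\delta^*(x))+(T-\delta^*(x))L(x^*,u^*),
\]
with $c_1(x,\delta^*(x))\le \beta(\lambda(R))(1+C_R)$ uniformly in $x\in\overline B_R$. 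On the other hand, the pointwise lower bound \eqref{eq:L0} applied to any optimal pair gives
\[
V_T(x)\ =\ \int_0^T L(\gamma_x(t),u_x(t))\,dt\ \ge\ \frac{1}{2\ell_1}\int_0^T|u_x(t)-u^*|^2\,dt+T\,L(x^*,u^*).
\]
Subtracting the two inequalities, the $T L(x^*,u^*)$ terms cancel and one is left with
\[
\frac{1}{2\ell_1}\int_0^T|u_x(t)-u^*|^2\,dt\ \le\ c_1(x,\delta^*(x))-\delta^*(x)L(x^*,u^*),
\]
whose right-hand side is bounded by a quantity $P_R$ depending only on $R$ (through $\lambda(R)$, $C_R$, $\delta^*(R)$, and $u^*$). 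This is \eqref{eq:l2bounds}.

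\textbf{Step 2 (The pointwise trajectory bound).} Fix $t\in[0,T]$. By \Cref{prop:constraint}, the set $E:=\{s\in[0,T]:\gamma_x(s)\notin\K\}$ has Lebesgue measure at most $M_R$; since $\gamma_x$ is continuous, $\R^d\setminus\K$ being open means $E$ is an at most countable disjoint union of open intervals. If $\gamma_x(t)\in\K$ then $|\gamma_x(t)|\le \max_{y\in\K}|y|=:R_\K$. Otherwise $t$ lies in one such component $(a,b)\subset E$, with $b-a\le M_R$, and either $a>0$ (so $\gamma_x(a)\in\K$, thus $|\gamma_x(a)|\le R_\K$) or $a=0$ (so $|\gamma_x(a)|\le R$). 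Set $\phi(s):=1+|\gamma_x(s)|$. Using {\bf (F0)} and Cauchy--Schwarz together with \eqref{eq:l2bounds},
\[
\int_a^t(1+|u_x(s)|)\,ds\ \le\ (b-a)(1+|u^*|)+\sqrt{(b-a)}\Big(\int_a^b|u_x-u^*|^2\,ds\Big)^{1/2}\ \le\ M_R(1+|u^*|)+\sqrt{M_R P_R}=:\Lambda_R.
\]
Hence for $s\in[a,t]$,
\[
\phi(s)\ \le\ \phi(a)+c_f\int_a^s(1+|u_x(r)|)\,\phi(r)\,dr,
\]
and Gronwall's inequality gives $\phi(t)\le (1+\max\{R,R_\K\})\exp(c_f\Lambda_R)$. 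Defining $Q_R$ as this bound minus $1$ yields \eqref{eq:compactnesstrajectory}.

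\textbf{Main obstacle.} The delicate point is that $T$ is arbitrarily large, so one cannot afford a Gronwall argument on the full interval $[0,T]$: the $L^1$-norm of $u_x$ there grows like $\sqrt{T}$. The key observation that unlocks the proof is that \eqref{eq:measure1} forces every excursion of $\gamma_x$ outside $\K$ to have length at most $M_R$, so Gronwall is only applied on very short intervals whose endpoints (except possibly the initial one) lie in $\K$. It is precisely the uniformity in $T$ of the excursion-time bound from \Cref{prop:constraint} that makes the constant $Q_R$ independent of $T$.
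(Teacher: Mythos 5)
Your proposal is correct and follows essentially the same route as the paper: the $L^2$ bound comes from comparing the optimal cost with the {\bf (LUGC)}-connector-plus-equilibrium competitor via \eqref{eq:L0}, and the pointwise bound comes from applying Gronwall only on excursion intervals outside $\K$, whose length is controlled by \Cref{prop:constraint}. Your bookkeeping of the Gronwall step (working with $\phi=1+|\gamma_x|$ and the $L^1$ kernel $c_f(1+|u_x|)$, bounded via Cauchy--Schwarz and \eqref{eq:l2bounds}) is in fact a cleaner rendering of the paper's own estimate.
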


\proof
We begin with the proof of \eqref{eq:l2bounds}. 
 Since $(\gamma_{x}, u_{x}) \in \Gamma_{0,T}^{x \to}$ is optimal for \eqref{eq:minimization} we have that
\begin{align}\label{eq:ee1}
\begin{split}
V_{T}(x) = & \int_{0}^{T}{L(\gamma_{x}(t), u_{x}(t))\ dt} 	
\geq \frac{1}{2\ell_{1}}\int_{0}^{T}{|u_{x}(t)-u^{*}|^{2}\ dt} +TL(x^{*},u^{*}).
\end{split}
\end{align}
On the other hand, let $(\bar\gamma_{x}, \bar{u}_{x}) \in \Gamma_{0,\delta^{*}(x)}^{x \to x^{*}}$ be given by {\bf (LUGC)} and define the control 
\begin{align*}
\widehat{u}_{x}(t)=
\begin{cases}
	\bar{u}_{x}(t), & \quad t \in [0,\delta^{*}(x)]
	\\
	u^{*}, & \quad t \in (\delta^{*}(x), T],
\end{cases}	
\end{align*}
that is, $(\widehat\gamma_{x}, \widehat{u}_{x}) \in \Gamma_{0,T}^{x \to x^{*}}$. 
%We recall that by definition $\| \bar{u}_{x} \|_{\infty, [0,\delta^{*}(x)]}\leq 1$.
By the definition of $V_{T}$ we deduce that
\begin{align}\label{eq:ee2}
V_{T}(x) \leq \int_{0}^{\delta^{*}(x)}{L(\bar\gamma_{x}(t), \bar{u}_{x}(t))\ dt} + (T-\delta^{*}(x))L(x^{*},u^{*}).	
\end{align}
Combining \eqref{eq:ee1} and \eqref{eq:ee2} we obtain 
\begin{align*}
\frac{1}{2\ell_{1}}\int_{0}^{T}{|u_{x}(t)-u^{*}|^{2}\ dt} \leq \int_{0}^{\delta^{*}(x)}{L(\bar\gamma_{x}(t), \bar{u}_{x}(t))\ dt} - \delta^{*}(x) L(x^{*},u^{*}).	
\end{align*}
In order to prove \eqref{eq:l2bounds}, we need an upper bound for the term 
\begin{equation*}
	\int_{0}^{\delta^{*}(x)}{L(\bar\gamma_{x}(t), \bar{u}_{x}(t))\ dt}.
\end{equation*}
Recall that from {\bf (LUGC)} we have that $\|\bar{u}_{x} \|^{2}_{2, [0,\delta^{*}(x)]} \leq C_{R}$. So, \Cref{lem:boundedtrajectories} ensures that 
\begin{equation}\label{eq:control}
|\bar\gamma_{x}(t)| \leq (R + c_{f}(1+C_{R})\delta^{*}(R))e^{c_{f}(1+C_{R})\delta^{*}(R)} =: \la(R), \quad \forall\ t \in [0,\delta^{*}(x)].
\end{equation}
Therefore, by assumption {\bf (L2)} we deduce that
\begin{equation*}
	\int_{0}^{\delta^{*}(x)}{L(\bar\gamma_{x}(t), \bar{u}_{x}(t))\ dt} \leq \beta(\la(R))(1+C_{R})%\ell_{2} + \ell_{1}C_{R} + \ell_{3}\la(R)^{\beta}.
\end{equation*}
Hence, \eqref{eq:l2bounds} follows taking
\begin{equation*}
P_{R} = 2(\beta(\la(R))(1+C_{R}) + \delta^{*}(R) L(x^{*}, u^{*})). 
\end{equation*}

We now proceed to prove \eqref{eq:compactnesstrajectory}. Let $(\gamma_{x}, u_{x}) \in \Gamma_{0,T}^{x \to}$ be a solution of \eqref{eq:minimization}. 
%Then, in view of \Cref{prop:constraint} we know that
%\begin{equation*}
%	\LL^{1}\left(\{ t \in [0, T] : \gamma_{x}(t) \in \K_{L}\} \right) \geq T-M_{R}.
%\end{equation*}
Clearly, we just need to estimate $|\gamma_{x}(t)|$ for all times at which the optimal trajectory lies outside the compact set $\K$. Let $\tau \in [0, T]$ be such that $\gamma_{x}(\tau) \not\in \K$ and set
\begin{equation*}
t_{0} = 
\begin{cases}
	\sup\{t \in [0, \tau\ ]: \gamma_{x}(t) \in \K\}, & \text{if}\,\, \{t \in [0, \tau\ ]: \gamma_{x}(t) \in \K\}\not= \emptyset
	\\
	0, & \text{if}\,\, \{t \in [0, \tau\ ]: \gamma_{x}(t) \in \K\}= \emptyset
\end{cases}
\end{equation*}
%Then, by point ($i$) we obtain that
% \begin{equation*}
 %\int_{t_{0}}^{\bar{t}}{|u_{x}(t)|^{2}\ dt} \leq P_{R}.	
 %\end{equation*}
Let us first consider the case $t_{0} \not= 0$. Since $\gamma_{x}$ is a solution of \eqref{eq:dynamics}, recalling that $|t-t_{0}| \leq M_{R}$ we deduce that for any $t \in [t_{0},\tau]$
\begin{align*}
|\gamma_{x}(t)| \leq\ & |\gamma_{x}(t_{0})| + c_{f}\int_{t_{0}}^{t}{(1+|u_{x}(s)|)(1+|\gamma_{x}(s)|)\ ds}	
\\
\leq & |\gamma_{x}(t_{0})| +  c_{f}(M_{R}(1 + |u^{*}|) + \| u_{x}-u^{*} \|_{2, [t_{0}, \bar{t}]}) \left(\int_{t_{0}}^{t}{\big(1+|\gamma_{x}(s)|^{2}\big)\ ds} \right)^{\frac{1}{2}}.
\end{align*}
Hence, appealing to \eqref{eq:measure1} and \eqref{eq:l2bounds} we deduce that 
\begin{align*}
	|\gamma_{x}(t)|^{2} \leq\ & \bar{C}\Big(|\gamma_{x}(t_{0})|^{2} + M_{R}^{2}(1 + |u^{*}|^{2}) + \| u_{x} - u^{*}\|_{2,[t_{0},\bar{t}]}^{2}M_{R} 
	\\
	+\ & (M_{R} + \| u_{x}-u^{*}\|_{2,[t_{0},\bar{t}]}^{2}) \int_{t_{0}}^{t}{|\gamma_{x}(s)|^{2}\ ds}  \Big)
	\\
	\leq\ & \bar{C}\left(|\gamma_{x}(t_{0})|^{2} + M_{R}^{2} + |u^{*}|^{2} + P_{R}M_{R} + (M_{R} + P_{R}) \int_{t_{0}}^{t}{|\gamma_{x}(s)|^{2}\ ds}\right)
\end{align*}
for some constant $\bar{C} \geq 0$. Thus, Gronwall's inequality yields 
%the existence of a constant $Q_{R} \geq 0$ such that
\begin{equation*}
	|\gamma_{x}(t)| \leq \bar{C}|\gamma_{x}(t_{0})|^{2}(M_{R}^{2}(1 + |u^{*}|^{2}) + P_{R}M_{R})e^{M_{R}^{2} + P_{R}M_{R}},\quad  \forall\ t \in [t_{0}, \bar{t}]
\end{equation*}
and we set $Q_{R}:=\bar{C}|\gamma_{x}(t_{0})|^{2}(M_{R}^{2}(1 + |u^{*}|^{2})+ P_{R}M_{R})e^{M_{R}^{2} + P_{R}M_{R}}$. Since $|\gamma_{x}(t_{0})| \in \K$ we have that $|\gamma_{x}(t_{0})| \leq \max\{|y|: y \in \K\}$ and, moreover, $|\gamma_{x}(t)| \leq \max\{|y|: y \in \K\}$ for all times $t$ at which $\gamma_{x}(t) \in \K$, we get the conclusion.

If $t_{0} = 0$, recalling that $\gamma(t_{0})=\gamma(0)=x \in \overline{B}_{R}$ from the above reasoning we get the result. 
\qed

\section{Long-time average and ergodic constant}
\label{sec:ergodicconstant}

 In  this section, we investigate the  existence of the limit
\begin{equation*}
\lim_{T \to \infty} \frac{1}{T}V_{T}(x)	 \quad (x \in \R^{d}),
\end{equation*}
where $V_{T}(x)$ is defined in \eqref{eq:evoValue}. We also address the related problem of the existence of solutions to the ergodic Hamilton-Jacobi equation 
\begin{equation}\label{eq:critical}
c+H(x, D\chi(x)) = 0 \quad (x \in \R^{d})	
\end{equation}
for some $c \in \R$, where we recall that $H: \R^{d} \times \R^{d} \to \R$ is given by  
\begin{equation*}
H(x,p)=\sup_{u \in \R^{m}} \left\{ \langle p, f(x,u) \rangle  - L(x,u) \right\}, \quad \forall\ (x,p) \in \R^{d} \times \R^{d}.
	\end{equation*}
	
 %{\color{blue} \begin{remarks}\label{rem:Hgrowth}\em
%	In view of the assumptions on $\{f_{i}\}_{i=1,\dots, m}$ and $L$ we obtain the following estimates for the Hamiltonian $H$:
%	\begin{itemize}
%	\item[($i$)] there exists a constant $\widetilde{C} \geq 0$ such that
%	\begin{equation}\label{eq:Hcondition}
%	|H(x,p) - H(y,p)| \leq \widetilde{C}(1+|x|^{\beta} + |y|^{\beta} + |p|^{2})|x-y|, \quad \forall\ x, y \in \R^{d};	
%	\end{equation}
%\item[($ii$)] there exists a constant $\widetilde{C}^{\prime} \geq 0$ such that
%\begin{equation*}
%	|H(x,p)| \leq \widetilde{C}^{\prime}(1+|x|^{\frac{\beta}{2}+1} + |p|^{2}), \quad \forall\ (x,p) \in \R^{d} \times \R^{d}
%\end{equation*}	
%and, for any $x \in \R^{d}$, we have that 
%\begin{equation}\label{eq:Hcondition1}
%|H(x,p)-H(x,q)| \leq c_{f}(1+|x|^{\frac{\beta}{2}}+ |p|+|q|)|p-q|, \quad \forall\ p, q \in \R^{d} 
%\end{equation}
%\end{itemize} 
%	\end{remarks}}

In order to prove the main result of this section, \Cref{thm:existencelimit} below, we need to show, first, that the value function $V_{T}$ has locally bounded oscillation uniformly in $T$.

\begin{lemma}\label{lem:boundedoscillation}
	Assume {\bf (F0)}, {\bf (L1)} -- {\bf (L3)} and {\bf (LUGC)}.  For any $R \geq 0$ there exists $K(R) \geq 0$ such that
	\begin{equation*}
		|V_{T}(x) - V_{T}(y)| \leq K(R), \quad \forall\ T \geq T_{R} \quad \forall\ x, y \in \overline{B}_{R},
	\end{equation*}
	where $T_{R}$ is given by {\bf (LUGC)}.
	%where $r \geq 0$ is the degree of nonholonomy of the compact set $\overline{B}_{R}$. 
\end{lemma}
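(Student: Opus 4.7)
The plan is to derive a one-sided bound $V_T(y)-V_T(x)\le K(R)$ by constructing a competitor control starting from $y$, and then to obtain the two-sided inequality by symmetry. The competitor will be built by prepending a controllability bridge from $y$ to $x$ of length $T_R$ (provided by \textbf{(LUGC)}), and then following a time-shifted copy of an optimal pair for $V_T(x)$ on $[T_R,T]$. The main issue to handle is the resulting time mismatch: the shifted optimal pair only fills the interval of length $T-T_R$, so one only recovers $\int_0^{T-T_R}L(\gamma_x,u_x)\,ds$ instead of $V_T(x)$. This tail defect must be controlled from below, and this is precisely where the structure assumption \textbf{(L2)}--\textbf{(L3)} comes in.

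First I would fix $R\ge 0$, $x,y\in\overline{B}_R$ and $T\ge T_R$. By \textbf{(LUGC)}, pick $(\bar\gamma,\bar u)\in\Gamma_{0,T_R}^{y\to x}$ with $\|\bar u\|_{2,[0,T_R]}^2\le C_R$, and by Remark~\ref{rem:existenceminimizers}(1) pick an optimal pair $(\gamma_x,u_x)\in\Gamma_{0,T}^{x\to}$ for $V_T(x)$. Define the competitor control on $[0,T]$ by setting $u_y(t)=\bar u(t)$ on $[0,T_R]$ and $u_y(t)=u_x(t-T_R)$ on $(T_R,T]$; the associated trajectory $\gamma_y$ with $\gamma_y(0)=y$ then coincides with $\bar\gamma$ on $[0,T_R]$ (so $\gamma_y(T_R)=x$) and with $\gamma_x(\cdot-T_R)$ on $[T_R,T]$, giving a valid element of $\Gamma_{0,T}^{y\to}$.

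Second, I would estimate the bridge cost. Since $\|\bar u\|_{2,[0,T_R]}^2\le C_R$, Lemma~\ref{lem:boundedtrajectories} (applied via a standard reduction to $L^\infty$, or used in the form already employed in the proof of Proposition~\ref{prop:constraint}) yields a bound $|\bar\gamma(t)|\le\lambda(R)$ on $[0,T_R]$ for a constant $\lambda(R)$ depending only on $R$, $C_R$, $T_R$, $c_f$. Assumption \textbf{(L1)} then gives
\[
\int_0^{T_R} L(\bar\gamma(t),\bar u(t))\,dt\ \le\ \beta(\lambda(R))\bigl(T_R+C_R\bigr)\ =:\ K_1(R).
\]

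Third, for the shifted piece I would write
\[
\int_{T_R}^{T} L(\gamma_y(t),u_y(t))\,dt=\int_0^{T-T_R} L(\gamma_x(s),u_x(s))\,ds=V_T(x)-\int_{T-T_R}^{T} L(\gamma_x(s),u_x(s))\,ds.
\]
The key pointwise inequality $L(x,u)\ge L(x^*,u^*)$ for every $(x,u)\in\R^d\times\R^m$ follows from \textbf{(L2)} (which forces $L(x,u)\ge L(x,u^*)$) combined with \textbf{(L3)} (which gives $L(x,u^*)\ge L(x^*,u^*)$ on $\K$, trivially from the definition of $x^*$, and on $\R^d\setminus\K$ via the gap $\Theta$). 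Hence the truncated tail satisfies
\[
\int_{T-T_R}^{T} L(\gamma_x(s),u_x(s))\,ds\ \ge\ T_R\,L(x^*,u^*),
\]
and adding the bridge cost yields
\[
V_T(y)\ \le\ K_1(R)+V_T(x)-T_R\,L(x^*,u^*).
\]
Setting $K(R):=K_1(R)+T_R|L(x^*,u^*)|$, this is the desired one-sided bound $V_T(y)-V_T(x)\le K(R)$. Exchanging the roles of $x$ and $y$ (the construction is symmetric, \textbf{(LUGC)} provides a bridge in either direction) gives the two-sided estimate.

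The main obstacle is indeed the time mismatch just mentioned: without the lower bound $L\ge L(x^*,u^*)$ coming from \textbf{(L2)}--\textbf{(L3)}, the difference between $V_T(x)$ and the truncated integral $\int_0^{T-T_R}L(\gamma_x,u_x)\,ds$ could a priori grow with $T$, which would destroy the $T$-uniform oscillation bound. The pointwise ground-state inequality is what keeps this error a constant depending only on $R$.
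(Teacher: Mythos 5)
Your proposal is correct and follows essentially the same route as the paper's proof: prepend the \textbf{(LUGC)} bridge from $y$ to $x$ on $[0,T_R]$, concatenate with the time-shifted optimal control for $V_T(x)$, bound the bridge cost via the Gronwall-type trajectory estimate and \textbf{(L1)}, and control the tail mismatch from below by $T_R L(x^*,u^*)$ using the ground-state inequality $L\ge L(x^*,u^*)$ from \textbf{(L2)}--\textbf{(L3)}, then symmetrize in $x$ and $y$. The only differences are cosmetic (your bridge constant $\beta(\lambda(R))(T_R+C_R)$ versus the paper's $T_R\beta(\lambda(R))(1+C_R)$, and your explicit derivation of the pointwise lower bound instead of citing \eqref{eq:L0}).
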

\begin{proof}
 Let $R \geq 0$ and let $x$, $y \in \overline{B}_{R}$.
%  and set $\delta = d_{\SR}(x,y)$. Then by \Cref{cor:ballbox} we know that $$\delta \leq \widetilde{c}_{2}|x-y|^{\frac{1}{r_{0}}} \leq c(R)$$ for some positive constant $c(R) \geq 0$. Set 
% \begin{equation*}
% T_{R}= c(R)	
% \end{equation*}
% and take $T \geq T_{R}$. 
 Let $(\overline\gamma_{y}, \overline{u}_{y}) \in \Gamma_{0, T_{R}}^{y \to x}$ where $T_{R}$ is given by {\bf (LUGC)} and let $(\gamma_{x}, u_{x}) \in \Gamma_{0,T}^{x \to}$ be a solution of \eqref{eq:minimization}. Define the control 
\begin{align*}
\widehat{u}_{y}(t)=
\begin{cases}
	\overline{u}_{y}(t), & 	\quad t \in [0, T_{R}]
	\\
	u_{x}(t-T_{R}), & \quad t \in (T_{R}, T]
\end{cases}	
\end{align*}
and let $\widehat\gamma_{y}$ be the associated trajectory. 
%that is, $(\widehat\gamma_{y}, \widehat{u}_{y}) \in \Gamma_{0,T}^{y \to}$. 
Then, we have that
\begin{align}\label{eq:eq1}
\begin{split}
	& V_{T}(y)-V_{T}(x)
	\\
	 \leq\ & \int_{0}^{T}{L(\widehat\gamma_{y}(t), \widehat{u}_{y}(t))\ dt} - \int_{0}^{T}{L(\gamma_{x}(t), u_{x}(t))\ dt}
\\
\leq\ & \int_{0}^{T_{R}}{L(\overline\gamma_{y}(t), \overline{u}_{y}(t))\ dt}	
+ \int_{T_{R}}^{T}{L(\gamma_{x}(t-T_{R}), u_{x}(t-T_{R}))\ dt} -  \int_{0}^{T}{L(\gamma_{x}(t), u_{x}(t))\ dt}
\\
=\ &  \int_{0}^{T_{R}}{L(\overline\gamma_{y}(t), \overline{u}_{y}(t))\ dt} + \int_{0}^{T-T_{R}}{L(\gamma_{x}(s), u_{x}(s))\ ds} - \int_{0}^{T}{L(\gamma_{x}(s), u_{x}(s))\ ds}  
\\
=\ &  \int_{0}^{T_{R}}{L(\overline\gamma_{y}(t), \overline{u}_{y}(t))\ dt} 
 - \int_{T-T_{R}}^{T}{L(\gamma_{x}(s), u_{x}(s))\ ds}.
 \end{split}
\end{align}
First, by \eqref{eq:L0} we get
\begin{equation*}
\int_{T-T_{R}}^{T}{L(\gamma_{x}(s), u_{x}(s))\ ds} \geq T_{R} L(x^{*},u^{*}).	
\end{equation*}
Then, since $\| \overline{u}_{y} \|_{2, [0,T_{R}]} \leq C_{R}$ by {\bf (LUGC)}, from \Cref{lem:boundedtrajectories} we deduce that 
\begin{equation}\label{eq:pp}
	|\overline\gamma_{y}(t)| \leq (R + c_{f}(1+C_{R})T_{R})e^{c_{f}(1+C_{R})T_{R}}=: \la(R), \quad \forall\ t \in [0, T_{R}].
\end{equation}
Thus, by {\bf (L1)} we obtain
\begin{equation*}
\int_{0}^{T_{R}}{L(\overline\gamma_{y}(s), \overline{u}_{y}(s))\ ds} \leq T_{R}\beta(\la(R))(1+C_{R})=:T_{R} K_{0}(R).	
\end{equation*}
Hence, going back to \eqref{eq:eq1}, we have that
\begin{align*}
& V_{T}(y)-V_{T}(x)
\\
\leq & \int_{0}^{T_{R}}{L(\overline\gamma_{y}(s), \overline{u}_{y}(s))\ ds}- \int_{T-T_{R}}^{T}{L(\gamma_{x}(s), u_{x}(s))\ ds}
\\
\leq\ & T_{R} (K_{0}(R) - L(x^{*},u^{*})):=K(R).
\end{align*}
Switching $x$ and $y$ in the above reasoning completes the proof.\end{proof}

\begin{lemma}\label{lem:periodic}
Assume {\bf (F0)}, {\bf (L1)} -- {\bf (L3)} and {\bf (LUGC)}. For any $R \geq 0$ there exists a constant $N_{R} \geq 0$	such that for any $x \in \overline{B}_{R}$, any $T > T_{R'}$ with $R^{'}=Q_{R} \vee R$, where $Q_{R}$ is given in \Cref{thm:boundednesstraj}, and any optimal pair $(\gamma_{x}, u_{x}) \in \Gamma_{0,T}^{x \to}$ of \eqref{eq:minimization} there exists a pair $(\gamma_{T}, u_{T}) \in \Gamma_{0,T}^{x \to x}$  such that
\begin{equation*}
\int_{0}^{T}{L(\gamma_{T}(t), u_{T}(t))\ dt} \leq \int_{0}^{T}{L(\gamma_{x}(t), u_{x}(t))\ dt} + N_{R}.
\end{equation*}
\end{lemma}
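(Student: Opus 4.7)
The plan is to construct the closed trajectory by concatenating a long initial segment of the optimal trajectory with a short controllability bridge back to $x$. Since $T > T_{R'}$, we may truncate the optimal pair at time $T - T_{R'}$ and then steer the resulting endpoint back to $x$ in the remaining time.

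First, I would invoke \Cref{thm:boundednesstraj} to obtain $|\gamma_x(t)| \leq Q_R$ for all $t \in [0,T]$; in particular $\gamma_x(T - T_{R'}) \in \overline{B}_{R'}$ where $R' = Q_R \vee R$. Since also $x \in \overline{B}_R \subset \overline{B}_{R'}$, assumption {\bf (LUGC)} (applied at radius $R'$) yields a pair $(\bar\gamma, \bar u) \in \Gamma_{0, T_{R'}}^{\gamma_x(T-T_{R'}) \to x}$ with $\|\bar u\|_{2,[0,T_{R'}]}^{2} \leq C_{R'}$. I then define
\begin{equation*}
u_{T}(t)=
\begin{cases}
u_{x}(t), & t \in [0, T-T_{R'}], \\
\bar u(t - (T-T_{R'})), & t \in (T-T_{R'}, T],
\end{cases}
\end{equation*}
and let $\gamma_{T}$ be the associated trajectory starting at $x$. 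By construction, $\gamma_{T}(T) = x$, so $(\gamma_{T}, u_{T}) \in \Gamma_{0,T}^{x \to x}$.

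Next, I would estimate the cost of the new pair against the optimal one. The two trajectories agree on $[0, T-T_{R'}]$, so the cost difference reduces to the contributions on $[T-T_{R'}, T]$:
\begin{equation*}
\int_{0}^{T}{L(\gamma_{T}, u_{T})\, dt} - \int_{0}^{T}{L(\gamma_{x}, u_{x})\, dt} = \int_{0}^{T_{R'}}{L(\bar\gamma(s), \bar u(s))\, ds} - \int_{T-T_{R'}}^{T}{L(\gamma_{x}(t), u_{x}(t))\, dt}.
\end{equation*}
For the first term, \Cref{lem:boundedtrajectories} together with the bound $\|\bar u\|_{2,[0,T_{R'}]}^{2} \leq C_{R'}$ gives a uniform bound $|\bar\gamma(s)| \leq \la(R')$ (with $\la(R')$ as in \eqref{eq:control} applied at radius $R'$). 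Hence by {\bf (L1)},
\begin{equation*}
\int_{0}^{T_{R'}}{L(\bar\gamma(s), \bar u(s))\, ds} \leq \beta(\la(R'))\bigl(T_{R'} + C_{R'}\bigr).
\end{equation*}
For the second term, applying the pointwise lower bound \eqref{eq:L0} yields $L(\gamma_x, u_x) \geq L(x^*, u^*)$, so
\begin{equation*}
\int_{T-T_{R'}}^{T}{L(\gamma_{x}(t), u_{x}(t))\, dt} \geq T_{R'} L(x^{*}, u^{*}).
\end{equation*}
Combining these two inequalities gives the desired bound with
\begin{equation*}
N_{R} := \beta(\la(R'))(T_{R'} + C_{R'}) - T_{R'} L(x^{*}, u^{*}),
\end{equation*}
which depends only on $R$ through $R' = Q_{R} \vee R$. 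I do not anticipate a serious obstacle: the uniform spatial bound of \Cref{thm:boundednesstraj} is what makes {\bf (LUGC)} applicable at the truncation time, and all remaining estimates are routine.
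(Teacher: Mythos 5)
Your proposal is correct and follows essentially the same route as the paper: truncate the optimal pair at time $T-T_{R'}$, use {\bf (LUGC)} at radius $R'=Q_R\vee R$ (justified by \Cref{thm:boundednesstraj}) to steer $\gamma_x(T-T_{R'})$ back to $x$, bound the bridge cost via \Cref{lem:boundedtrajectories} and {\bf (L1)}, and bound the discarded tail from below via \eqref{eq:L0}. The only differences are cosmetic (your constant $\beta(\la(R'))(T_{R'}+C_{R'})$ versus the paper's packaging of the same estimate), so no further comment is needed.
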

\proof
Fix $R \geq 0$, $x \in \overline{B}_{R}$, and take an optimal pair $(\gamma_{x}, u_{x}) \in \Gamma_{0,T}^{x \to}$. If $\gamma_{x}(T)=x$ then it is enough to take $C_{R}=0$ and $(\gamma_{T}, u_{T})=(\gamma_{x}, u_{x})$. If this is not the case, by \Cref{thm:boundednesstraj} we have that $|\gamma_{x}(t)| \leq Q_{R}$ for any $t \in [0,T]$. So, set
\begin{equation*}
R^{'}=Q_{R} \vee R. 
\end{equation*}
%
%let $\delta_{0} \in (0,T)$ be such that $$\delta_{0} = d_{\SR}(\gamma_{x}(T-\delta_{0}), x).$$  
%Note that such a number $\delta_{0}$ exists since $g(\delta):=d_{\SR}(\gamma_{x}(T-\delta), x) - \delta$, for $\delta \in [0,T]$, is a continuous function satisfying
%\begin{align*}
%g(T)= & -T < 0
%\\
%g(0) =\ & d_{\SR}(\gamma_{x}(T), x) > 0.
%\end{align*}
%%thus there exists at least a zero of $g$ and we take $\delta$ as the first one. Observe that it is immediate that $\delta < T$ and thus, what follows is well-posed. 
For simplicity of notation set $y= \gamma_{x}(T-T_{R'})$. Let $(\bar\gamma_{y}, \bar{u}_{y}) \in \Gamma_{0,T_{R'}}^{y \to x}$ and define the control
\begin{align*}
u_{T}(t)=
\begin{cases}
	u_{x}(t), & \quad t \in [0,T-T_{R'}]
	\\
	\bar{u}_{y}(t+T_{R'}-T), & \quad t \in (T-T_{R'}, T].
\end{cases}	
\end{align*}
Then
\begin{align*}
& \int_{0}^{T}{L(\gamma_{T}(t), u_{T}(t))\ dt}	=  \int_{0}^{T-T_{R'}}{L(\gamma_{x}(t), u_{x}(t))\ dt} + \int_{0}^{T_{R'}}{L(\bar\gamma_{y}(t), \bar{u}_{y}(t))\ dt}
\\
= & \int_{0}^{T}{L(\gamma_{x}(t), u_{x}(t))\ dt} - \int_{T-T_{R'}}^{T}{L(\gamma_{x}(t), u_{x}(t))\ dt} + \int_{0}^{T_{R'}}{L(\bar\gamma_{y}(t), \bar{u}_{y}(t))\ dt}.
\end{align*}
By \eqref{eq:L0} we obtain
\begin{equation*}
	\int_{T-T_{R'}}^{T}{L(\gamma_{x}(t), u_{x}(t))\ dt} \geq T_{R'} L(x^{*},u^{*}).
\end{equation*}
Since by {\bf (LUGC)} we have that $\| \bar{u}_{y} \|^{2}_{2, [0,T_{R'}]} \leq C_{R}$ and $|y| \leq Q_{R}$, by \Cref{lem:boundedtrajectories} we also have that 
\[
|\bar\gamma_{y}(t)| \leq (Q_{R}+ c_{f}(1+C_{R})T_{R'})e^{c_{f}(1+C_{R})T_{R'}}=: \la(R)
\]
for any $t \in [0,T_{R'}]$. So, we obtain
\begin{multline*}
 - \int_{T-T_{R'}}^{T}{L(\gamma_{x}(t), u_{x}(t))\ dt} + \int_{0}^{T_{R'}}{L(\bar\gamma_{y}(t), \bar{u}_{y}(t))\ dt}
 \\
 \leq - T_{R'} L(x^{*},0) + \int_{0}^{T_{R'}}{L(\bar\gamma_{y}(t), \bar{u}_{y}(t))\ dt}  \leq\ T_{R'}\big(\beta(\la(R))(1+C_{R}) - L(x^{*},u^{*})\big)
\end{multline*}
where the last inequality holds by {\bf (L1)}. 
The conclusion follows taking
\begin{equation*}
N_{R} = T_{R'}\big(\ell_{1}\beta(\la(R))(1+C_{R}) - L(x^{*},u^{*})\big).	 \eqno{\square}
\end{equation*}

\begin{theorem}[{\bf Existence of the critical constant}]\label{thm:existencelimit}
Assume {\bf (F0)}, {\bf (L1)} -- {\bf (L3)} and {\bf (LUGC)}. There exists a constant $\mane \in \R$, called the critical constant (or Ma\~n\'e's critical value), such that
\begin{equation}\label{eq:existencemane}
\lim_{T \to \infty} \sup_{x \in \overline{B}_{R}} \left|\frac{1}{T}V_{T}(x) - \mane\right| = 0, \quad \forall\ R > 0.
\end{equation}
\end{theorem}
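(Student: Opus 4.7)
The plan is to exploit the equilibrium structure at $x^*$ guaranteed by assumption {\bf (L3)}, together with the oscillation estimate of \Cref{lem:boundedoscillation}, in order to identify the critical constant explicitly and obtain the uniform convergence simultaneously.

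First, I claim that
\[
V_T(x^*) = T\, L(x^*, u^*), \qquad \forall\ T > 0.
\]
For the upper bound, since $f(x^*, u^*) = 0$ by {\bf (L3)}, the stationary pair $(\gamma, u) \equiv (x^*, u^*)$ belongs to $\Gamma_{0,T}^{x^* \to}$ and attains the cost $T\,L(x^*, u^*)$. For the lower bound, inequality \eqref{eq:L0} implies $L(y,v) \geq L(x^*, u^*)$ for every $(y,v) \in \R^{d} \times \R^{m}$, so every admissible trajectory-control pair starting at $x^*$ has action at least $T\, L(x^*, u^*)$. This identifies the critical value, and I set
\[
\mane := L(x^*, u^*).
\]

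Next, to pass from $x^*$ to an arbitrary initial datum $x \in \overline{B}_R$, I apply \Cref{lem:boundedoscillation} with $R' := R \vee |x^*|$. Since both $x$ and $x^*$ lie in $\overline{B}_{R'}$, for every $T \geq T_{R'}$ the oscillation estimate gives $|V_T(x) - V_T(x^*)| \leq K(R')$. Dividing by $T$, using the identity $V_T(x^*)/T \equiv \mane$, and taking the supremum over $x \in \overline{B}_R$ yields
\[
\sup_{x \in \overline{B}_R} \left|\frac{V_T(x)}{T} - \mane\right| \leq \frac{K(R')}{T},
\]
which tends to zero as $T \to \infty$, establishing \eqref{eq:existencemane}.

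Because both the explicit evaluation at $x^*$ and the oscillation estimate are already in hand, no substantial obstacle remains. The key observation driving the proof is that assumption {\bf (L3)}, together with the strong convexity provided by {\bf (L1)}, forces $x^*$ to be a global minimizer of $L$ on $\R^{d} \times \R^{m}$, which reduces the determination of $\mane$ to an elementary calculation rather than a limiting procedure. A more conventional Fekete-type approach, based on the near-subadditivity $V_{T_1+T_2}(x^*) \leq V_{T_1}(x^*) + V_{T_2}(x^*) + N_{|x^*|}$ extracted from \Cref{lem:periodic}, would also produce the limit of $V_T(x^*)/T$ but without identifying it; the sharper identity above bypasses that step and directly gives the value $\mane = L(x^*, u^*)$.
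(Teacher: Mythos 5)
Your proof is correct, but it follows a genuinely different route from the paper. You exploit the fact that, under {\bf (L1)}--{\bf (L3)}, the pair $(x^{*},u^{*})$ is a global minimizer of $L$ with $f(x^{*},u^{*})=0$, so that \eqref{eq:L0} gives the exact identity $V_{T}(x^{*})=T\,L(x^{*},u^{*})$ for every $T$; combined with \Cref{lem:boundedoscillation} applied on $\overline{B}_{R\vee|x^{*}|}$, this yields the uniform limit with the explicit value $\mane=L(x^{*},u^{*})$ and even a rate $K(R')/T$. The paper instead uses \Cref{lem:boundedoscillation} only to reduce to the single point $x=0$ and then runs a $\liminf$/$\limsup$ argument: along a sequence $T_{n}$ realizing the $\liminf$, near-optimal arcs are closed into loops at bounded extra cost via \Cref{lem:periodic} and extended periodically to dominate the $\limsup$; the value of $\mane$ is only identified afterwards, in \Cref{cor:minmane}, by essentially the same two ingredients you use (the pointwise bound $L\geq L(x^{*},u^{*})$ and the stationary control at $x^{*}$). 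What your shortcut buys is brevity and a sharper statement (it absorbs \Cref{cor:minmane} and gives a convergence rate); what the paper's dynamical argument buys is robustness: it does not rest on the coincidence $\mane=\min L$ forced by the equilibrium condition in {\bf (L3)}, and so it is the template that survives in settings where the critical value is not attained by a stationary trajectory. Under the stated hypotheses, though, your argument is complete: the only external facts you invoke, \eqref{eq:L0} and \Cref{lem:boundedoscillation}, are both established in the paper before the theorem.
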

\proof

Let $R \geq 0$. By \Cref{lem:boundedoscillation}, for all $x \in \overline{B}_{R}$ we deduce that
\begin{equation}\label{eq:boundR}
|V_{T}(x) - V_{T}(0)| \leq K(R). 	
\end{equation}
Hence, to obtain the conclusion it suffices to prove the existence of the limit 
\begin{equation}\label{eq:limitmane}
\lim_{T \to \infty} \frac{1}{T} V_{T}(0)=: \mane. 	
\end{equation}
%Indeed, we have that
%\begin{equation*}
%\frac{1}{T}V_{T}(x)=\frac{1}{T}\big(V_{T}(x)- V_{T}(0)\big) + \frac{1}{T}V_{T}(0)	
%\end{equation*}
%and by \eqref{eq:boundR} for $T \to \infty$ the first term on the right-hand side converges to $0$. 
For this purpose let $\{ T_{n}\}_{n \in \N}$ and $\{(\gamma_{n}, u_{n})\}_{n \in \N} \subset \Gamma_{0,T_{n}}^{0 \to}$ be such that
\begin{align}\label{eq:limit}
\begin{split}
\liminf_{T \to \infty} \frac{1}{T}V_{T}(0) = & \lim_{n \to \infty} \frac{1}{T_{n}} \inf_{(\gamma, u) \in \Gamma_{0,T_{n}}^{0 \to}} \int_{0}^{T_{n}}{L(\gamma(t), u(t))\ dt}
\\
= & \lim_{n \to \infty} \frac{1}{T_{n}}\int_{0}^{T_{n}}{L(\gamma_{n}(t), u_{n}(t))\ dt}	.
\end{split}
\end{align}
By \Cref{lem:periodic} there exists a sequence $(\gamma_{n}^{0}, u_{n}^{0}) \in \Gamma_{0,T_{n}}^{0 \to 0}$ and a constant $N_{0} \geq 0$ such that
\begin{equation}\label{eq:periodicine}
\int_{0}^{T_{n}}{L(\gamma^{0}_{n}(t), u^{0}_{n}(t))\ dt} \leq \int_{0}^{T_{n}}{L(\gamma_{n}(t), u_{n}(t))\ dt} + N_{0}.
\end{equation}
Next, for any $n \in \N$ let $\widehat{u}^{0}_{n}$ be the periodic extension of $u^{0}_{n}$, i.e., $\widehat{u}^{0}_{n}$ is $T_{n}$-periodic and $\widehat{u}^{0}_{n}(t)=u^{0}_{n}(t)$ for any $t \in [0,T_{n}]$. Then, we have that
\begin{align}\label{eq:limsup}
 & \limsup_{T \to \infty} \frac{1}{T}V_{T}(0) \leq \limsup_{T \to \infty} \frac{1}{T}\int_{0}^{T}{L(\widehat\gamma^{0}_{n}(t), \widehat{u}^{0}_{n}(t))\ dt}, \quad \forall\ n \in \N
\end{align}
by using $\widehat{u}^{0}_{n}$ to estimate from above \eqref{eq:evoValue}. Then, by periodicity and \eqref{eq:periodicine} we obtain 
\begin{multline*}
 \limsup_{T \to \infty} \frac{1}{T}\int_{0}^{T}{L(\widehat\gamma^{0}_{n}(t), \widehat{u}^{0}_{n}(t))\ dt}
 \\ =\ \frac{1}{T_{n}}\int_{0}^{T_{n}}{L(\gamma^{0}_{n}(t), u^{0}_{n}(t))\ dt} 
 \leq \frac{1}{T_{n}}\int_{0}^{T_{n}}{L(\gamma_{n}(t), u_{n}(t))\ dt} + \frac{N_{0}}{T_{n}}.	
\end{multline*}
Therefore, recalling \eqref{eq:limsup} and \eqref{eq:limit} we conclude that
\begin{align*}
\limsup_{T \to \infty} \frac{1}{T}V_{T}(0) \leq\ & \lim_{n \to \infty} \left(\frac{1}{T_{n}}\int_{0}^{T_{n}}{L(\gamma_{n}(t), u_{n}(t))\ dt} + \frac{N_{0}}{T_{n}}\right)
\\
=\ & \liminf_{T \to \infty} \frac{1}{T}V_{T}(0).	
\end{align*}
This yields \eqref{eq:limitmane}, thus completing the proof. \qed

\begin{corollary}\label{cor:minmane}
	Assume {\bf (F0)}, {\bf (L1)} -- {\bf (L3)} and {\bf (LUGC)}. Then, we have that 
	\begin{equation*}
	\mane=\min_{(x, u) \in \R^{d} \times \R^{m}} L(x,u).	
	\end{equation*}
\end{corollary}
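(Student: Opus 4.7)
The approach is to identify $\mane$ directly with $L(x^*,u^*)$, where $(x^*,u^*)$ is the equilibrium pair furnished by hypothesis {\bf (L3)}. First I would verify that this value is in fact the global minimum of $L$: assumption {\bf (L2)} gives $L(x,u)\geq L(x,u^*)$ for every $(x,u)$, so
\[
\min_{(x,u)\in\R^{d}\times\R^{m}} L(x,u)=\min_{x\in\R^{d}} L(x,u^*).
\]
Hypothesis {\bf (L3)} says that on $\R^{d}\setminus\K$ the function $L(\cdot,u^*)$ stays strictly above $\min_{\K}L(\cdot,u^*)=L(x^*,u^*)$ by at least $\Theta>0$, so the minimum on the whole of $\R^{d}$ is attained at $x^*$. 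Thus the identity to prove is $\mane=L(x^*,u^*)$.

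For the upper bound $\mane\leq L(x^*,u^*)$, the key observation is that the constant curve $\gamma(t)\equiv x^*$ with constant control $u(t)\equiv u^*$ is admissible from $x^*$: indeed, by {\bf (L3)} we have $f(x^*,u^*)=0$, so $(\gamma,u)\in\Gamma_{0,T}^{x^*\to}$ for every $T>0$. Plugging this pair into the definition of $V_{T}$ gives
\[
V_{T}(x^*)\leq \int_{0}^{T} L(x^*,u^*)\,dt =T\,L(x^*,u^*).
\]
Applying \Cref{thm:existencelimit} with any $R\geq|x^*|$ yields $\mane=\lim_{T\to\infty}V_{T}(x^*)/T\leq L(x^*,u^*)$.

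For the lower bound $\mane\geq L(x^*,u^*)$, I would simply use the pointwise inequality $L(\gamma(t),u(t))\geq L(x^*,u^*)$, valid for every admissible pair $(\gamma,u)$ by the first part of the argument. Integrating over $[0,T]$ and taking the infimum over $\Gamma_{0,T}^{0\to}$ gives
\[
V_{T}(0)\geq T\,L(x^*,u^*),
\]
and dividing by $T$ and letting $T\to\infty$ yields $\mane\geq L(x^*,u^*)$ via \Cref{thm:existencelimit}.

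Combining the two inequalities proves $\mane=L(x^*,u^*)=\min_{(x,u)}L(x,u)$. There is no real obstacle here: the entire content sits in \textbf{(L2)}--\textbf{(L3)}, which simultaneously give us a pointwise lower bound on $L$ and a stationary admissible trajectory realizing that bound; the heavy machinery of Sections \ref{sec:compactness}--\ref{sec:ergodicconstant} enters only through the locally uniform convergence supplied by \Cref{thm:existencelimit}, which is what lets us evaluate the limit at the specific base point $x^*$.
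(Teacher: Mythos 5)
Your proposal is correct and follows essentially the same argument as the paper: the lower bound comes from the pointwise inequality $L(x,u)\geq L(x^{*},u^{*})$ (a consequence of \textbf{(L2)}--\textbf{(L3)}, i.e.\ \eqref{eq:L0}), and the upper bound from the stationary admissible pair $(x^{*},u^{*})$ with $f(x^{*},u^{*})=0$, using the locally uniform convergence in \Cref{thm:existencelimit} to evaluate the limit at the base point $x^{*}$. No discrepancies worth noting.
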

\proof
First, we recall that 
\begin{equation*}
	\mane=\lim_{T \to \infty} \frac{1}{T}V_{T}(0)=\lim_{T \to \infty} \frac{1}{T} \inf_{(\gamma,u) \in \Gamma_{0,T}^{0 \to}} \int_{0}^{T}{L(\gamma(s), u(s))\ ds}.
\end{equation*}
So, taking $(\gamma_{x}, u_{x}) \in \Gamma_{0,T}^{0\to}$ optimal for $V_{T}(0)$ we obtain 
\begin{align*}
\mane=\ & \lim_{T \to \infty} \frac{1}{T} \int_{0}^{T}{L(\gamma_{x}(s), u_{x}(s))\ ds} \geq\ \frac{1}{T}\lim_{T \to \infty} \int_{0}^{T}{L(x^{*},u^{*})\ ds}
\\
=\ & L(x^{*},u^{*})=\min_{(x,u) \in \R^{d} \times \R^{m}} L(x,u). 	
\end{align*}
since, by assumption {\bf (L2)} and {\bf (L3)}, we have that $L(x,u) \geq L(x^{*},u^{*})$ for any $(x,u) \in \R^{d} \times \R^{m}$. 

On the other hand, we observe that, owing to \Cref{thm:existencelimit}, the value of $\mane$ could be computed replacing $0$ in \eqref{eq:limitmane} with any other point of $\R^d$.
So,
\begin{equation*}
\mane=\lim_{T\to\infty}\frac{1}{T}V_{T}(x^{*}).
\end{equation*}
Now, taking the control $u(t)=u^{*}$ which makes $x^{*}$ stationary yields 
\begin{equation*}
\mane = \lim_{T \to \infty} \frac{1}{T} \inf_{(\gamma,u) \in \Gamma_{0,T}^{x^{*} \to}} \int_{0}^{T}{L(\gamma(s), u(s))\ ds} \leq \lim_{T\to\infty}\frac{1}{T}\int_{0}^{T}{L(x^{*},u^{*})\ ds}=L(x^{*},u^{*})
\end{equation*}
which in turn implies the conclusion. \qed

\begin{remarks}\label{rem:min0}\em
Note that in view of \Cref{thm:existencelimit} we have that 
\begin{equation*}
\lim_{T \to \infty} \sup_{x \in \overline{B}_{R}} \left| \frac{1}{T}\inf_{(\gamma, u) \in \Gamma_{0,T}^{x \to}} \int_{0}^{T}{\big(L(\gamma(s), u(s)) - \mane \big)\ ds} \right| = 0, \quad \forall\ R \geq 0.
\end{equation*}
Moreover, from \Cref{cor:minmane} we deduce that  
\begin{equation*}
\min_{(x, u) \in \R^{d} \times \R^{m}} L(x,u) - \mane = 0.
\end{equation*}
Therefore, by replacing $L$ with $\widehat{L}(x,u):=L(x,u) - \mane$ one can reduce the analysis to the case of $\alpha(\widehat{L})=\min_{(x, u) \in \R^{d} \times \R^{m}} \widehat{L}(x,u)=0$.\end{remarks}

\section{Sub-Riemannian control systems: application to Abel means}
\label{sec:Abel}
	In this section, we concentrate our analysis to the sub-Riemannian control systems for which we know that {\bf (LUGC)} holds, \Cref{ex:exam}. Now, having at our disposal the existence of the critical constant $\mane$ for such systems, we will construct a continuous viscosity solution to the ergodic equation 
	\begin{equation*}
	\mane + H(x, D\chi(x))=0, \quad (x \in \R^{d}).
	\end{equation*}

	 For $m \in \N$ and $i=1, \dots, m$, let 
	\begin{equation*}
		f_{i}: \R^{d} \to \R^{d} 
	\end{equation*}
	and
	\begin{equation*}
		u_{i}:[0,\infty) \to \R
	\end{equation*}
be smooth vector fields and measurable controls, respectively, and consider the following controlled dynamics of sub-Riemannian type
	\begin{align}\label{eq:dynamics1}
         \dot\gamma(t)= \displaystyle{\sum_{i=1}^{m}{f_{i}(\gamma(t))u_{i}(t)}}= F(\gamma(t))u(t), \quad t \in [0,+\infty)
	\end{align}
	where $F(x)=[f_{1}(x)| \dots| f_{m}(x)]$ is an $d \times m$ real matrix and $u(t)=(u_{1}(t), \dots, u_{m}(t))^{\star}$\footnote{$(u_{1}, \dots, u_{m})^{\star}$ denotes the transpose of $(u_{1}, \dots, u_{m})$}. 
We assume the vector fields $f_{i}$ to satisfy the following.
\begin{itemize}
%\item[({\bf F1})] There exists an integer $r_{0} \geq 1$ such that $f_{i} \in C^{r_{0}-1}(\R^{d})$, for any $i=1,\dots, m$, and 
%\begin{equation}\label{eq:rnonholonomy}
%\Delta^{r_{0}}(x)=\R^{d}, \quad \forall x \in \R^{d}
%\end{equation}
%where $\Delta^{r_{0}}(x)$ is inductively defined in \eqref{eq:delta1} and \eqref{eq:delta2}. 
\item[({\bf F1})] There exists a constant $c_{f} \geq 1$ such that for any $i=1,\dots, m$
\begin{equation}\label{eq:fassum}
|f_{i}(x)| \leq c_{f}(1+|x|), \quad \forall x \in \R^{d}.	
\end{equation}
\end{itemize}
%In literature, under assumption {\bf (F0)} the distribution $\{f_{i}\}_{i=1,\dots,m}$ is called regular. 
Note that, by {\bf (F1)} system \eqref{eq:dynamics1} satisfies {\bf (F0)}.

Moreover, we assume the following.
\begin{itemize}
\item[{\bf (L3')}] There exists a compact set $\mathcal{K} \subset \R^{d}$ such that 
\begin{equation*}
\min_{x \in \mathcal{K}} L(x,0) = 0, \quad \text{and} \quad  \inf_{x \in \R^{d} \backslash \mathcal{K}} L(x,0) > 0.
\end{equation*} 
\end{itemize}
We recall that in view of \Cref{rem:min0} assumption {\bf (L3')} is not restrictive and, moreover, by \Cref{cor:minmane} we have that $\mane = 0$. Furthermore, {\bf (L3')} subsumes {\bf (L2)} and {\bf (L3)} given so far. 

Let $x^{*} \in \K$ be such that 
\begin{equation*}
L(x^{*},0)=\min_{x \in \K} L(x,0)
\end{equation*}
and set
\begin{equation*}
	\delta^{*}(x)=d_{\SR}(x,x^{*}), \quad \forall\ x \in \R^{d}.
\end{equation*}
Observe that, by \Cref{cor:ballbox}, there exists a nondecreasing function $D: \R_{+} \to \R_{+}$ with
\begin{equation}\label{eq:ballboxincrease}
\delta^{*}(x) \leq D(|x|), \quad \forall\ x \in \R^{d}.	
\end{equation}

Next, we provide estimates on trajectories satisfying \eqref{eq:dynamics1}. 
\begin{lemma}\label{lem:L2bounds}
	Let $x \in \R^{d}$, $t \geq 0$ and $(\gamma, u) \in \Gamma_{0,t}^{x \to}$. Then there exists a constant $\kappa(\|u\|_{2}, t) \geq 0$ such that
	\begin{equation}\label{eq:00}
		|\gamma(s)|\leq \kappa(\|u\|_{2},t)(1+|x|), \quad \forall\ s \in [0,t]
	\end{equation}
and 
\begin{equation}\label{eq:000}
|\gamma(t_{2}) - \gamma(t_{1})| \leq c_{f}\kappa(\|u\|_{2},t)(1+|x|)\|u\|_{2} |t_{2}-t_{1}|^{\frac{1}{2}}, \quad 0 \leq t_{1} \leq t_{2} \leq t. 	
\end{equation}
\end{lemma}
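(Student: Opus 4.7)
The plan is to derive both estimates from the integral form $\gamma(s) = x + \int_0^s F(\gamma(r))u(r)\,dr$ via Gronwall's inequality and Cauchy--Schwarz. The starting point is the pointwise bound
\[
|F(y)u| \,=\, \Big|\sum_{i=1}^m f_i(y)u_i\Big| \,\leq\, \Big(\sum_{i=1}^m |f_i(y)|^2\Big)^{1/2}|u| \,\leq\, c_f\sqrt{m}\,(1+|y|)|u|,
\]
which follows from assumption \textbf{(F1)} combined with Cauchy--Schwarz in $\R^m$.

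First I would set $\varphi(s):=1+|\gamma(s)|$ and insert the above pointwise bound into the integral equation for $\gamma$ to obtain the Volterra-type inequality
\[
\varphi(s) \,\leq\, (1+|x|) + c_f\sqrt{m}\int_0^s \varphi(r)|u(r)|\,dr, \qquad s \in [0,t].
\]
Since $u \in L^2(0,t;\R^m) \subset L^1(0,t;\R^m)$, Gronwall's lemma yields
\[
\varphi(s) \,\leq\, (1+|x|)\exp\!\Big(c_f\sqrt{m}\int_0^s|u(r)|\,dr\Big) \,\leq\, (1+|x|)\exp\!\big(c_f\sqrt{m}\sqrt{t}\,\|u\|_2\big),
\]
where the final step applies Cauchy--Schwarz in time. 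Since $|\gamma(s)| \leq \varphi(s)$, this proves \eqref{eq:00} with, for instance, $\kappa(\|u\|_2,t) := \sqrt{m}\exp\!\big(c_f\sqrt{m}\sqrt{t}\,\|u\|_2\big)$; the extra factor $\sqrt{m}$ is harmless and convenient for the second estimate.

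For the H\"older-type estimate \eqref{eq:000}, I would integrate \eqref{eq:dynamics1} from $t_1$ to $t_2$ and apply the same pointwise bound to get
\[
|\gamma(t_2)-\gamma(t_1)| \,\leq\, c_f\sqrt{m}\int_{t_1}^{t_2}(1+|\gamma(r)|)|u(r)|\,dr.
\]
By Cauchy--Schwarz in time, this is bounded by $c_f\sqrt{m}\,\bigl(\int_{t_1}^{t_2}(1+|\gamma(r)|)^2\,dr\bigr)^{1/2}\|u\|_{2,[t_1,t_2]}$; inserting the already-proved bound $1+|\gamma(r)| \leq \kappa(\|u\|_2,t)(1+|x|)$ and using the trivial inequality $\|u\|_{2,[t_1,t_2]} \leq \|u\|_2$ then yields \eqref{eq:000}, up to absorbing $\sqrt{m}$ into $\kappa$.

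I do not foresee any substantive obstacle: the whole argument is a direct Gronwall estimate adapted to the sublinear growth condition \textbf{(F1)}. The only mild subtlety is that the constant $\kappa$ is required to depend on $t$ and $\|u\|_2$ only, which is guaranteed by estimating $\int_0^s|u(r)|\,dr$ crudely by $\sqrt{t}\,\|u\|_2$ (rather than by the partial $L^2$-norm on $[0,s]$) throughout.
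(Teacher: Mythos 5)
Your proof is correct and follows essentially the same route as the paper: write $\gamma$ in integral form, use the sublinear bound from \textbf{(F1)} on $F(\gamma)u$, apply Gronwall to get \eqref{eq:00}, and then combine this sup bound with H\"older/Cauchy--Schwarz in time for \eqref{eq:000}. The only cosmetic difference is that you run Gronwall with the $L^1$ kernel $|u(\cdot)|$ and then estimate $\int_0^t|u|\leq \sqrt{t}\,\|u\|_2$, whereas the paper squares the inequality and uses Gronwall with the constant kernel $c_f^2\|u\|_2^2$; both yield a constant depending only on $\|u\|_2$ and $t$, as required.
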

\proof
We begin by proving \eqref{eq:00}. For any $s \in [0,t]$ we have that
\begin{align*}
|\gamma(s)| \leq\ & |x| + \int_{0}^{t}{|F(\gamma(s))||u(s)|\ ds}
\\
\leq\ & |x| + \int_{0}^{t}{c_{f}(1+ |\gamma(s)|)|u(s)|\ ds}
\\
\leq\ & |x| + c_{f}\left(\int_{0}^{t}{\big(1+|\gamma(s)|\big)^{2}\ ds}\right)^{\frac{1}{2}}\| u\|_{2}.
\end{align*}
Thus, we get
\begin{align*}
|\gamma(s)|^{2} \leq C\left(|x|^{2} + c_{f}^{2}t\|u\|_{2}^{2} + c_{f}^{2}\|u\|_{2}^{2}\int_{0}^{t}{|\gamma(s)|^{2}\ ds}  \right)	
\end{align*}
which implies the \eqref{eq:00} by Gronwall's inequality.

We now proceed to show \eqref{eq:000}. Fix $t_{1}$, $t_{2}$ such that $0 \leq t_{1} \leq t_{2} \leq t$. Then, we have that 
\begin{align*}
	|\gamma(t_{2})-\gamma(t_{1})| \leq\ & \int_{t_{1}}^{t_{2}}{|F(\gamma(s))||u(s)|\ ds}
	\\
	\leq\ & c_{f}\int_{t_{1}}^{t_{2}}{(1+|\gamma(s)|)|u(s)|\ ds}
	\\
	\leq\ & c_{f}\kappa(\|u\|_{2},t)(1+|x|)\int_{t_{1}}^{t_{2}}{|u(s)|\ ds}
\end{align*}
where the last inequality holds by \eqref{eq:00}. Hence, by H\"older's inequality we obtain 
\begin{align*}
	|\gamma(t_{2})-\gamma(t_{1})| \leq c_{f}\kappa(\|u\|_{2},t)(1+|x|)\|u\|_{2}|t_{2}-t_{1}|^{\frac{1}{2}}. 
\end{align*}
This completes the proof of the lemma. \qed

Now, we move to the analysis of the ergodic equation 
\begin{equation*}
\mane+H(x, D\chi)=0, \quad x \in \R^{d}.	
\end{equation*}
We will show the existence of viscosity solutions to such an equation by studying the limit behavior of solutions to the discounted problem
\begin{equation}\label{eq:discounted}
\lambda v_{\lambda}(x) + H(x, Dv_{\lambda}(x))=0, \quad x \in \R^{d}	
\end{equation}
as $\lambda\downarrow 0$. 
To do so, define the function
	\begin{equation}\label{eq:discountedvalue}
	v_{\lambda}(x)=\inf_{(\gamma, u) \in \Gamma_{0,\infty}^{x \to}(e^{-\lambda t}dt)}\left\{\int_{0}^{+\infty}{e^{-\lambda t}L(\gamma(t), u(t))\ dt}\right\},	
	\end{equation}
where
\begin{align*}
	\Gamma_{0,\infty}^{x \to}(e^{-\lambda t}dt)  := \Big\{ & (\gamma, u) \in L^{\infty}_{\text{loc}}(0,\infty; \R^{d}) \times L^{2}_{\text{loc}}(0,\infty; \R^{m}):
	\\
	& (\gamma, u) \in \Gamma_{0, T}^{x \to} \,\,\,\, \forall\ T > 0,\,\, \text{and}\,\, \int_{0}^{\infty}{e^{-\lambda t}|u(t)|^{2}\ dt} < \infty\Big\}.
\end{align*}

	Note that, by {\bf (L0)} and {\bf (L3')} we have that $v_{\lambda}(x) \geq 0$ for any $x \in \R^{d}$. Moreover, $v_{\lambda}$ is the continuous viscosity solution of \eqref{eq:discounted}. 
%Note that it is easy to see that $\Gamma_{0,+\infty}^{x \to}(e^{-\lambda t}dt)$ is the natural space of controls in which there could be a minimizer if it exists. 
 \begin{proposition}\label{prop:uniform}
Assume {\bf (F1)}, and  {\bf (L1)} -- {\bf (L3')}. Then, for any $R \geq 0$ we have that:
\begin{itemize}
\item[($i$)] $\{ \lambda v_{\lambda}\}_{\lambda >0}$ is equibounded on $\overline{B}_{R}$;
 \item[($ii$)] there exists a constant $C_{R} \geq 0$ such that
\begin{equation}\label{eq:appholder}
|v_{\lambda}(x) - v_{\lambda}(y)| \leq C_{R}d_{\SR}(x,y), \quad \forall\ x, y \in \overline{B}_{R}.
\end{equation}
%In addition, if $\beta=0$ in {\bf (L2)} then $v_{\lambda}$ is globally Lipschitz w.r.t. $d_{\SR}$, that is, 
%\begin{equation}\label{eq:GlobalLip}
%|v_{\lambda}(x)-v_{\lambda}(y)| \leq C_{0}d_{\SR}(x,y), \quad \forall\ x,y \in \R^{d}.	
%\end{equation}
\end{itemize}
%Consequently, $\{ v_{\lambda}\}_{\lambda >0}$ is equicontinuous on $\overline{B}_{R}$.
\end{proposition}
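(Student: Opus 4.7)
The plan is to establish both bounds via explicit admissible competitors, exploiting Chow's theorem, the representation \eqref{eq:subriem} of $d_{\SR}$, and the vanishing-cost equilibrium $(x^{*},0)$ provided by \textbf{(L3$'$)}.

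For (i), I first observe that $L\ge 0$ under \textbf{(L3$'$)} (combine \eqref{eq:L0} with $u^{*}=0$ and $L(x^{*},0)=0$), so $v_{\lambda}\ge 0$ and it is enough to upper-bound $v_{\lambda}$ uniformly in $\lambda$ on $\overline{B}_{R}$. Fix $x\in\overline{B}_{R}$ and, using \eqref{eq:subriem}, pick $(\bar\gamma,\bar u)\in\Gamma_{0,\delta^{*}(x)}^{x\to x^{*}}$ with $|\bar u(t)|\le 1$ a.e. Extend $\bar u$ by $0$ on $(\delta^{*}(x),\infty)$; since $F(x^{*})\cdot 0=0$ and $L(x^{*},0)=0$, this extended pair lies in $\Gamma_{0,\infty}^{x\to}(e^{-\lambda t}dt)$ and costs exactly $\int_{0}^{\delta^{*}(x)} e^{-\lambda t} L(\bar\gamma,\bar u)\,dt$. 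By \textbf{(L1)} we have $L(\bar\gamma(t),\bar u(t))\le 2\beta(|\bar\gamma(t)|)$; Lemma~\ref{lem:L2bounds} (applied with $\|\bar u\|_{2}^{2}\le \delta^{*}(x)$) together with \eqref{eq:ballboxincrease} bounds $|\bar\gamma(s)|$ and $\delta^{*}(x)$ solely in terms of $R$. Hence $v_{\lambda}(x)\le M_{R}$ uniformly in $\lambda$, so $\lambda v_{\lambda}$ is equibounded (in fact $v_{\lambda}$ itself is).

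For (ii), fix $\varepsilon\in(0,1]$ and $x,y\in\overline{B}_{R}$. Let $(\gamma_{y},u_{y})$ be $\varepsilon$-optimal for $v_{\lambda}(y)$, and via \eqref{eq:subriem} choose $(\bar\gamma_{xy},\bar u_{xy})\in\Gamma_{0,T_{xy}}^{x\to y}$ with $|\bar u_{xy}|\le 1$ and $T_{xy}\le d_{\SR}(x,y)+\varepsilon$. Concatenate $\tilde u=\bar u_{xy}$ on $[0,T_{xy}]$ and $\tilde u(t)=u_{y}(t-T_{xy})$ for $t>T_{xy}$: the resulting trajectory starts at $x$, so
\begin{equation*}
v_{\lambda}(x) \le \int_{0}^{T_{xy}} e^{-\lambda t} L(\bar\gamma_{xy},\bar u_{xy})\,dt + e^{-\lambda T_{xy}}\bigl(v_{\lambda}(y)+\varepsilon\bigr).
\end{equation*}
Since $v_{\lambda}(y)\ge 0$ and $e^{-\lambda T_{xy}}\le 1$, this rearranges to
\begin{equation*}
v_{\lambda}(x) - v_{\lambda}(y) \le \int_{0}^{T_{xy}} e^{-\lambda t} L(\bar\gamma_{xy},\bar u_{xy})\,dt + \varepsilon.
\end{equation*}
By the triangle inequality, $d_{\SR}(x,y)\le 2D(R)$, hence $T_{xy}\le 2D(R)+1$; Lemma~\ref{lem:L2bounds} then bounds $|\bar\gamma_{xy}(s)|$ by a constant $C(R)$, and \textbf{(L1)} with $|\bar u_{xy}|\le 1$ gives $L(\bar\gamma_{xy},\bar u_{xy})\le 2\beta(C(R))$. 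Thus the integral is at most $2\beta(C(R))\,T_{xy}\le C_{R}(d_{\SR}(x,y)+\varepsilon)$. Sending $\varepsilon\to 0$ and swapping $x\leftrightarrow y$ (legitimate because $d_{\SR}$ is symmetric for drift-less systems via $u\mapsto -u$) delivers \eqref{eq:appholder}.

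The whole proof reduces to bookkeeping once the competitors are chosen; the only point requiring attention is that every estimate depends solely on $R$, which follows from combining Lemma~\ref{lem:L2bounds} with \eqref{eq:ballboxincrease}, together with the clean absorption of the discount factor $e^{-\lambda T_{xy}}$ into the bound via $v_{\lambda}\ge 0$. No serious obstacle is expected.
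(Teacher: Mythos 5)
Your argument for part ($ii$) is essentially the paper's: concatenate a time-near-optimal control with $|u|\le 1$ realizing $d_{\SR}$ (via \eqref{eq:subriem}) with a near-optimal control for the discounted value at the other point, bound the connecting cost by {\bf (L1)} together with the a priori bound on the connecting trajectory, and use that everything ($T_{xy}\le 2D(R)+1$, the trajectory bound from \Cref{lem:L2bounds}) depends only on $R$. Your handling of the discount shift is in fact a bit cleaner than the paper's: instead of expanding $e^{-\lambda\delta}-1=-\lambda\delta+o(\lambda\delta)$ and absorbing the remainder using the bound on $\lambda v_\lambda$, you simply drop the nonpositive term $(e^{-\lambda T_{xy}}-1)(v_\lambda(y)+\eps)$ using $v_\lambda\ge 0$; you are also more careful than the paper in not assuming the infimum in \eqref{eq:subriem} is attained.

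For part ($i$) you use a different competitor (steer to $x^{*}$ and park there), which yields the stronger statement that $v_\lambda$ itself is equibounded on $\overline{B}_R$, whereas the paper takes the constant pair $(\bar\gamma,\bar u)\equiv(x,0)$ and gets $\lambda v_\lambda(x)\le\beta(R)$ directly. One small slip: from $0\le v_\lambda\le M_R$ you infer ``so $\lambda v_\lambda$ is equibounded,'' but this only gives $\lambda v_\lambda\le \lambda M_R$, which is uniform only on bounded ranges of $\lambda$ (e.g.\ $\lambda\in(0,1]$), not for all $\lambda>0$ as the statement requires. The fix is already inside your own estimate: keep the discount factor, so that $\int_0^{\delta^{*}(x)}e^{-\lambda t}L(\bar\gamma,\bar u)\,dt\le 2\beta(C(R))\int_0^\infty e^{-\lambda t}\,dt=2\beta(C(R))/\lambda$, hence $\lambda v_\lambda(x)\le 2\beta(C(R))$ for every $\lambda>0$ (or simply use the paper's stationary competitor at $x$). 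With that one-line correction the proof is complete.
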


\begin{remarks}\em
Recalling that  $r_{R}$ is the uniform degree of nonholonomy of the distribution $\{f_{i}\}_{i=1,\dots,m}$ associated with the compact set $\overline{B}_{R}$, \Cref{cor:ballbox} and \eqref{eq:appholder} yield
\begin{equation*}
|v_{\lambda}(x)-v_{\lambda}(y)| \leq C_{R}\widetilde{c}_{2}|x-y|^{\frac{1}{r_{R}}} \quad \forall\ x,y \in \overline{B}_{R}.
\end{equation*}
\end{remarks}

\noindent{\it Proof of \Cref{prop:uniform}:}
	Let $R \geq 0$ and let $x \in \overline{B}_{R}$. Taking $(\bar\gamma, \bar{u}) \in \Gamma_{0,\infty}^{x \to}(e^{-\lambda t}dt)$ such that $(\bar\gamma(t), \bar{u}(t))  \equiv (x, 0)$, by {\bf (L1)} we get
	\begin{align*}
	 \lambda v_{\lambda}(x) \leq\ & \lambda \int_{0}^{+\infty}{e^{-\lambda t}L(x, 0)\ dt}
	\\
	 \leq\ &   \beta(R)\int_{0}^{+\infty}{\lambda e^{-\lambda t}\ dt}=\beta(R).	\end{align*}
On the other hand, by {\bf (L3')} we have that
\begin{align*}
& \lambda v_{\lambda}(x) \geq 0.
\end{align*}
Thus, for any $\lambda > 0$ we conclude that  
\begin{equation*}
\lambda |v_{\lambda}(x)| \leq \beta(R), \quad \forall\ x \in \overline{B}_{R}. 	
\end{equation*}

	In order to prove ($ii$), for any fixed $x$, $y \in \overline{B}_{R}$ set $\delta=d_{\SR}(x,y)$. Let $(\bar\gamma_{y}, \bar{u}_{y}) \in \Gamma_{0, \delta}^{y \to x}$ be a solution of \eqref{eq:subriem}. Let $(\gamma_{x}, u_{x}) \in \Gamma_{0,+\infty}^{x \to}(e^{-\lambda t}dt)$ be such that 
	\begin{equation*}
	\int_{0}^{\infty}{e^{-\lambda t}L(\gamma_{x}(t), u_{x}(t))\ dt} \leq v_{\lambda}(x) + \lambda.	
	\end{equation*}
Define a new control 
\begin{align*}
\widehat{u}_{y}(t)=
\begin{cases}
	\bar{u}_{y}(t), & 	\quad t \in [0, \delta]
	\\
	u_{x}(t-\delta), & \quad t \in (\delta, +\infty),
\end{cases}	
\end{align*}
and so $(\widehat\gamma_{y}, \widehat{u}_{y}) \in \Gamma_{0,\infty}^{y \to}(e^{-\lambda t}dt)$. Then, we have that
\begin{align*}
& v_{\lambda}(y)-v_{\lambda}(x) 
\\
\leq & \int_{0}^{\delta}{e^{-\lambda t}L(\bar\gamma_{y}(t), \bar{u}_{y}(t))\ dt}	
+ \int_{\delta}^{+\infty}{e^{-\lambda t}L(\gamma_{x}(t-\delta), u_{x}(t-\delta))\ dt} 
\\
-\ &  \int_{0}^{+\infty}{e^{-\lambda t}L(\gamma_{x}(t), u_{x}(t))\ dt} + \lambda
\\
=\ &  \int_{0}^{\delta}{e^{-\lambda t}L(\bar\gamma_{y}(t), \bar{u}_{y}(t))\ dt} 
+ (e^{-\lambda \delta}-1)\int_{0}^{+\infty}{e^{-\lambda s}L(\gamma_{x}(s), u_{x}(s))\ ds} + \lambda
\\
 =\ & \int_{0}^{\delta}{e^{-\lambda t}L(\bar\gamma_{y}(t), \bar{u}_{y}(t))\ dt} + \big(-\delta\lambda  + \text{o}(\delta\lambda)\big)(v_{\lambda}(x) + \lambda) + \lambda
\end{align*}
where 
\begin{equation*}\lim_{q \to 0} \frac{\text{o}(q)}{q}= 0.	
\end{equation*}
By point ($i$) we have that $\delta\lambda v_{\lambda}(x) \leq \delta \beta(R)$ and for $\lambda \leq 1$ we obtain $o(\delta\lambda) \leq \delta$. Moreover, by \Cref{lem:boundedtrajectories} we know that $$|\bar\gamma_{y}(t)|\leq (|y| + c_{f}\delta)e^{c_{f}\delta}=:\la(R), \quad \forall\ t \in [0,\delta]$$ 
since $\|\bar{u}_{y}\|_{\infty, [0,\delta]} \leq 1$. 
%and $$\delta \leq \widetilde{c}_{2}|x-y|^{\frac{1}{r_{0}}} \leq c(R)$$ by \Cref{cor:ballbox}. 
Thus, by {\bf (L2)} we deduce that
\begin{align*} 
\int_{0}^{\delta}{e^{-\lambda t}L(\bar\gamma_{y}(t), \bar{u}_{y}(t))\ dt} \leq \int_{0}^{\delta}{\beta(|\bar\gamma_{y}(t)|)(1+|\bar{u}_{y}(t)|^{2})\ dt}\leq  2 \delta \beta(\la(R)).
\end{align*} 
Therefore, setting $C_{R}=2 \beta(\la(R))$ we obtain \eqref{eq:appholder} recalling that $\delta=d_{\SR}(x,y).$ \qed
%Finally, from the previous estimates it easily follows that if $\beta=0$ the constant $C_{R}$ turns out to be independent of $R \geq 0$ and the Lipschitz inequality holds for any $x$, $y \in \R^{d}$. 

 Note that, the above proof fails for general control systems, i.e., of the form \eqref{eq:dynamics}, under the assumption {\bf (LUGC)} since, a priori, $T_{R}$ might not be of the order of $|x-y|$.

%Let us introduce the set of functions in which we will find solutions to the critical equation \eqref{eq:criticaleq} below: for $r \geq 1$ and $\theta \geq 0$ we define
%\begin{equation*}
%P_{\theta, r}(\R^{d})=\left\{u \in C(\R^{d}): \sup_{ x,y \in \R^{d}}\frac{|u(x)-u(y)|}{|x-y|^{\frac{1}{r}}(1+|x|^{\theta}+|y|^{\theta})} < \infty \right\}	
%\end{equation*}
%which we endow with the norm
%\begin{equation*}
%\|u\|_{\theta, r}= 	[u]_{\theta, r} + |u|_{\theta, \infty} 
%\end{equation*}
%where
%\begin{equation*}
%	[u]_{\theta, r}:= \sup_{x,y \in \R^{d}}\frac{|u(x)-u(y)|}{|x-y|^{\frac{1}{r}}(1+|x|^{\theta}+|y|^{\theta})}
%\end{equation*}
%and 
%\begin{equation*}
%|u|_{\theta, \infty} :=\sup_{x \in \R^{d}}\frac{|u(x)|}{1+|x|^{\theta}}.	
%\end{equation*}

\begin{theorem}[{\bf Existence of correctors}]\label{thm:convergence}
		Assume {\bf (F1)}, {\bf (F2)} and  {\bf (L0)} -- {\bf (L3')}. Then there exists a continuous function $\chi:\R^d\to\R$ and a sequence $\lambda_{n} \downarrow 0$ such that, for any $R \geq 0$, 
\begin{equation*}
\lim_{n \to \infty} v_{\lambda_{n}}(x)= \chi(x), \quad \text{uniformly on}\ \overline{B}_{R}.	
\end{equation*}
 Moreover, we have that:
 \begin{itemize}
   \item[($i$)] $\chi(x) \geq 0$, $\chi(x^{*})=0$, and $\chi$ is locally Lipschitz continuous w.r.t. $d_{\SR}$, that is, for any $R \geq 0$ there exists a constant $\ell_{R} \geq 0$ such that 
    \begin{equation}\label{eq:SRlip}
    	|\chi(x)-\chi(y)| \leq \ell_{R}d_{\SR}(x,y), \quad \forall\ x,y \in \overline{B}_{R}.
    \end{equation}
%    In addition, if $\beta=0$ in {\bf (L2)}, then $\chi$ is globally Lipschitz w.r.t. $d_{\SR}$, that is, 
%\begin{equation}\label{eq:LipLip}
%|\chi(x)-\chi(y)| \leq C_{0}d_{\SR}(x,y), \quad \forall\ x,y \in \R^{d}.	
%\end{equation} 
	\item[($ii$)] $\chi$ is a viscosity solution of the ergodic Hamilton-Jacobi equation
\begin{equation}\label{eq:HJ}
H(x, D \chi(x))=0 \quad (x \in \R^{d}).
\end{equation}
 \end{itemize}
 \end{theorem}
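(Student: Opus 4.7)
The plan is to deduce the existence of $\chi$ from the uniform bounds in \Cref{prop:uniform} combined with a natural normalization of $v_\lambda$ at $x^{*}$, and to obtain the PDE via standard stability of viscosity solutions. Under {\bf (L1)}, {\bf (L2)} (with $u^{*}=0$) and {\bf (L3')} the Lagrangian is nonnegative everywhere: $L(x,u)\geq L(x,0)\geq 0$, so $v_{\lambda}\geq 0$ on $\R^{d}$. Since $f(x,0)=\sum_{i=1}^{m}0\cdot f_{i}(x)=0$, the constant pair $(\gamma,u)\equiv(x^{*},0)$ lies in $\Gamma_{0,\infty}^{x^{*}\to}(e^{-\lambda t}dt)$ and has cost $\int_{0}^{\infty}e^{-\lambda t}L(x^{*},0)\,dt=0$, so $v_{\lambda}(x^{*})=0$ for every $\lambda>0$. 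Combining this with the sub-Riemannian Lipschitz estimate \eqref{eq:appholder} gives, for each $R\geq|x^{*}|$ and every $x\in\overline{B}_{R}$,
\begin{equation*}
0\leq v_{\lambda}(x)=v_{\lambda}(x)-v_{\lambda}(x^{*})\leq C_{R}\,d_{\SR}(x,x^{*}),
\end{equation*}
so $\{v_{\lambda}\}_{\lambda>0}$ is locally equibounded and locally equi-Lipschitz w.r.t.\ $d_{\SR}$. By \Cref{cor:ballbox} it is then also locally equi-H\"older w.r.t.\ the Euclidean distance, so Ascoli--Arzel\`a applies on every ball $\overline{B}_{N}$, and a standard diagonal extraction produces $\lambda_{n}\downarrow 0$ and a continuous function $\chi\colon\R^{d}\to\R$ such that $v_{\lambda_{n}}\to\chi$ uniformly on each $\overline{B}_{R}$.

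Passing to the limit in the above inequality immediately yields $\chi\geq 0$, $\chi(x^{*})=0$, and \eqref{eq:SRlip} with $\ell_{R}=C_{R}$, establishing (i). For (ii), recall that $v_{\lambda_{n}}$ is the continuous viscosity solution of the discounted equation \eqref{eq:discounted} with $\lambda=\lambda_{n}$, and since $v_{\lambda_{n}}$ is locally uniformly bounded while $\lambda_{n}\downarrow 0$, one has $\lambda_{n}v_{\lambda_{n}}\to 0$ locally uniformly on $\R^{d}$. The classical stability theorem for viscosity solutions under locally uniform convergence---valid because $H$ is continuous on $\R^{d}\times\R^{d}$ thanks to {\bf (L1)} and {\bf (F1)}---then implies that $\chi$ is a viscosity solution of $H(x,D\chi)=0$, i.e.\ of \eqref{eq:HJ} (with critical constant $\mane=0$ furnished by \Cref{cor:minmane} under {\bf (L3')}). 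The main technical work has already been absorbed into \Cref{prop:uniform}; the only mild care point in what remains is the diagonal argument needed to obtain a locally uniformly convergent subsequence on the non-compact state space $\R^{d}$.
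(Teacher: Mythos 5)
Your proof is correct and follows the paper's strategy almost verbatim: normalize by observing $v_{\lambda}\geq 0$ and $v_{\lambda}(x^{*})=0$, use \Cref{prop:uniform} together with \Cref{cor:ballbox} to get local equiboundedness and equicontinuity, extract a locally uniformly convergent subsequence by Ascoli--Arzel\`a plus a diagonal argument, and conclude (i) by passing to the limit and (ii) by stability of viscosity solutions. The one genuine (if small) deviation is how you dispose of the zeroth-order term: the paper invokes the Abelian--Tauberian result (\Cref{thm:newabelian} in \Cref{sec:Appendix1}) to identify $\lim_{\lambda\to 0}\lambda v_{\lambda}$ with $\lim_{T\to\infty}V_{T}/T=\mane=0$, whereas you note that the $\lambda$-independent bound $0\leq v_{\lambda}(x)\leq C_{R}\,d_{\SR}(x,x^{*})$ on $\overline{B}_{R}$ already forces $\lambda_{n}v_{\lambda_{n}}\to 0$ locally uniformly, so no Tauberian argument is needed to write the limit equation as $H(x,D\chi)=0$. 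Your shortcut is legitimate and more elementary here, because under {\bf (L3')} the critical constant is $0$ (\Cref{cor:minmane}); what the Tauberian route buys in exchange is the identification of the limit of the Abel means with the ergodic constant in general, which is the structural fact the paper wants on record (and uses to phrase \eqref{eq:abelian}) rather than merely the vanishing of $\lambda v_{\lambda}$ in the normalized case.
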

\proof
First, we observe that, by an adaptation of \cite[Theorem 5]{bib:MA} (see \Cref{thm:newabelian} in 	\Cref{sec:Appendix1}), we have that
\begin{equation}\label{eq:abelian}
0= \lim_{T \to +\infty} \frac{1}{T}V^{T}(x) = \lim_{\lambda \to 0} \lambda v_{\lambda}(x)
\end{equation}
locally uniformly in space.
We recall that $v_{\lambda}(x)$  is a continuous viscosity solution of
\begin{equation*}
	\lambda v_{\lambda}(x) + H(x, Dv_{\lambda}(x))=0 \quad (x \in \R^{d})
\end{equation*}
Since $v_{\lambda}(x^{*})=0$, by \Cref{prop:uniform} we deduce that $\{v_{\lambda}\}_{\lambda > 0}$ is equibounded and equicontinuous. So, applying the Ascoli-Arzel\'a Theorem and a diagonal argument  we deduce that there exists a sequence  $\lambda_{n} \downarrow 0$ such that $\{ v_{\lambda_{n}}(x)\}_{n \in \N}$ is locally uniformly convergent, i.e., for any $R \geq 0$
\begin{equation*}
\lim_{n \to \infty} v_{\lambda_{n}}(x) =: \chi(x)\quad\mbox{uniformly on}\quad \overline{B}_{R}. 	
\end{equation*}
Hence,  from {\bf (L3')} we immediately deduce that $\chi(x) \geq 0$ and, again, since $v_{\lambda}(x^{*})=0$ we get $\chi(x^{*})=0$. Furthermore, from  \eqref{eq:appholder} we get \eqref{eq:SRlip}. %Similarly, if $\beta=0$ from \eqref{eq:GlobalLip} we obtain \eqref{eq:LipLip}.
Finally,  the stability of viscosity solutions ensures that $\chi$ is a  solution of \eqref{eq:HJ}, which proves ($ii$). \qed

\begin{definition}[{\bf Critical equation and critical solutions}]
The equation
	\begin{equation}\label{eq:criticaleq}
H(x, D\chi(x))= 0 \quad (x \in \R^{d})	
\end{equation}
is called the {\em critical} (or, {\em ergodic}) Hamilton-Jacobi equation. A continuous function $\chi$ is called a {\em critical subsolution} (resp. {\em supersolution}) if it is a viscosity subsolution (resp. supersolution) of \eqref{eq:criticaleq} and a {\em critical solution} if it is both a subsolution and a supersolution. 
\end{definition}

\section{Representation formula}
\label{sec:LaxOleinik}

In this last section, we construct a critical solution that can be represented as the value function of a sub-Riemannian optimal control problem. Such a  solution, which is useful to develop the Aubry-Mather theory in the sub-Riemannian case, will be obtained as the asymptotic limit as $t\to\infty$ of the Lax-Oleinik semigroup, applied to $\chi$  given by \Cref{thm:convergence}. 
%Hereafter, we assume that $\beta=0$ in {\bf (L2)}.

We begin by giving the definition of dominated functions. 
\begin{definition}[{\bf Dominated functions}]\label{def:domcal}
			Let $a$, $b \in \R$ such that $a < b$ and let $x$, $y \in \R^{d}$.  Let $\phi$ be a continuous function on $\R^{d}$.		
			We say that $\phi$ is dominated by $L-c$, and we denote this by $\phi \prec L -c$, if for any trajectory-control pair $ (\gamma,u) \in \Gamma_{a,b}^{x \to y}$ we have that
			\begin{equation*}
			\phi(y) - \phi(x) \leq \int_{a}^{b}{L(\gamma(s), u(s))\ ds} - c\ (b-a).
			\end{equation*}
\end{definition}

Let us introduce, now, the  following class of functions 
 \begin{equation*}
\mathcal{S}=\bigg\{\varphi \in C(\R^{d}): \varphi(x) \geq 0\,\,\, \forall\ x \in \R^{d},\,\, \varphi \prec L \bigg\}
\end{equation*}
endowed with the topology induced by the uniform convergence on compact sets. Then, for any $x \in \R^{d}$, any $t \geq 0$, and any $\varphi \in \mathcal{S}$  define the functional 
\begin{equation*}
	\mathcal{F}_{\varphi}: \Gamma_{0,t}^{\to x} \to \R
\end{equation*}
as
\begin{equation*}
\mathcal{F}_{\varphi}(\gamma, u)=	 \varphi(\gamma(0)) + \int_{0}^{t}{L(\gamma(s), u(s))\ ds}
\end{equation*}
and set
\begin{align}\label{eq:LAsemi}
T_{t}\varphi(x) = \inf_{(\gamma,u) \in \Gamma_{0,t}^{\to x}} \left\{\varphi(\gamma(0)) + \int_{0}^{t}{L(\gamma(s), u(s))\ ds} \right\}.
\end{align}
%for any $\varphi \in \mathcal{S}$. 
% let $\chi$ be any critical solution such that $\chi(0)=0$ and satisfying \eqref{eq:SRlip}, i.e., for any $R \geq 0$ there exists a constant $C_{R} \geq 0$ such that 
%\begin{equation*}
%|\chi(x)-\chi(y)| \leq C_{R}d_{\SR}(x,y), \quad \forall\ x,y \in \overline{B}_{R}.	
%\end{equation*}
%Then, we define the following function 
%\begin{equation*}
%T_{t}\chi(x)=\inf_{(\gamma,u) \in \Gamma_{0,t}^{\to x}} \left\{\chi(\gamma(0)) + \int_{0}^{t}{L(\gamma(s), u(s))\ ds} \right\}
% \quad (t \geq 0,\,\,x \in \R^{d}).
%\end{equation*}
Before proceeding  to derive several properties of $T_{t}\varphi$, including the fact that $T_{t}\varphi(x) \geq 0$, we first show that the class $\mathcal{S}$ is non-empty. 

\begin{lemma}
	Assume {\bf (F1)}, {\bf (F2)} and  {\bf (L0)} -- {\bf (L3')}. Then, the function $\chi$ constructed in \Cref{thm:convergence} belongs to $\mathcal{S}$.  
\end{lemma}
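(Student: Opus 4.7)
The required membership in $\mathcal{S}$ amounts to three properties: continuity of $\chi$, the inequality $\chi(x) \geq 0$ for every $x \in \R^{d}$, and the domination $\chi \prec L$. The first two are immediate from \Cref{thm:convergence}, since $\chi$ is the locally uniform limit of the continuous, non-negative functions $v_{\lambda_n}$. Thus the task reduces to showing that, for every pair $(\gamma, u) \in \Gamma_{a,b}^{x \to y}$,
\[
\chi(y) - \chi(x) \leq \int_{a}^{b} L(\gamma(s), u(s))\, ds.
\]

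The plan is to first establish the analogous inequality for each discounted value function $v_\lambda$, and then pass to the limit along the subsequence $\lambda_n \downarrow 0$ furnished by \Cref{thm:convergence}. Since $v_\lambda \geq 0$ by assumption {\bf (L3')}, the discounted equation $\lambda v_\lambda + H(x, D v_\lambda) = 0$ forces $v_\lambda$ to be a viscosity subsolution of $H(x, D v_\lambda) \leq 0$. Because $H(x, \cdot)$ is convex, as the supremum of the affine maps $p \mapsto \langle p, f(x,u) \rangle - L(x,u)$, the classical equivalence between viscosity subsolutions of a convex Hamilton-Jacobi inequality and $L$-domination yields
\[
v_\lambda(y) - v_\lambda(x) \leq \int_{a}^{b} L(\gamma(s), u(s))\, ds, \qquad \forall\ (\gamma, u) \in \Gamma_{a,b}^{x \to y}.
\]
Since $\gamma([a,b])$ is compact and $v_{\lambda_n} \to \chi$ uniformly on compact sets, this inequality passes to the limit and delivers $\chi \prec L$.

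The delicate step is the subsolution-to-domination implication for our \emph{non-Tonelli} but convex Hamiltonian, and this is where I expect the main difficulty. A direct route is mollification: set $v_\lambda^{\eps} := v_\lambda * \xi_{\eps}$; by the local Lipschitz continuity of $v_\lambda$ from \Cref{prop:uniform}, $D v_\lambda$ exists a.e. and $D v_\lambda^{\eps}(x)$ is the $\xi_\eps$-average of $D v_\lambda$ in an $\eps$-neighborhood of $x$. Applying Jensen's inequality to the convex function $p \mapsto H(x,p)$, and using the local uniform continuity of $H$ in $x$, one obtains $H(x, D v_\lambda^{\eps}(x)) \leq \omega_R(\eps)$ on each ball $\overline{B}_R$, with $\omega_R(\eps) \downarrow 0$ as $\eps \downarrow 0$. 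Integrating the chain-rule inequality
\[
\tfrac{d}{ds} v_\lambda^{\eps}(\gamma(s)) = \langle D v_\lambda^{\eps}(\gamma), f(\gamma, u) \rangle \leq L(\gamma(s), u(s)) + H(\gamma(s), D v_\lambda^{\eps}(\gamma(s))) \leq L(\gamma(s), u(s)) + \omega_R(\eps)
\]
over $[a,b]$ and sending $\eps \downarrow 0$ produces the $L$-domination for $v_\lambda$. The care needed lies in controlling $\omega_R(\eps)$ uniformly along the trajectory; this is afforded by the fact that $\gamma([a,b])$ sits inside a fixed compact set, on which the assumed smoothness of $f$ and {\bf (L1)} force $H$ to be uniformly continuous.
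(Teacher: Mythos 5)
Your reduction to showing $\chi \prec L$, and the idea of first proving a domination-type inequality at the level of $v_\lambda$ and then letting $\lambda_n \downarrow 0$, is in the spirit of the paper's argument, but the way you establish the inequality for $v_\lambda$ has a genuine gap. You invoke ``local Lipschitz continuity of $v_\lambda$ from \Cref{prop:uniform}'' to get a.e.\ differentiability and then mollify. However, \Cref{prop:uniform} only gives $|v_\lambda(x)-v_\lambda(y)| \le C_R\, d_{\SR}(x,y)$, i.e.\ Lipschitz continuity with respect to the \emph{sub-Riemannian} distance; by \Cref{cor:ballbox} this translates into Euclidean H\"older continuity of exponent $1/r$ only. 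Since $H(x,p)=\sup_u\{\langle p, F(x)u\rangle - L(x,u)\}$ is non-coercive in $p$, being a viscosity subsolution of $H\le 0$ controls only the horizontal quantity $F(x)^{\star}Dv_\lambda$, not the full Euclidean gradient: Rademacher's theorem is not available, and $Dv_\lambda$ need neither exist a.e.\ nor be locally bounded. This breaks the mollification step twice: (i) Jensen's inequality is applied to averages of a gradient whose a.e.\ existence is unjustified; and (ii) even formally, the error $\omega_R(\eps)$ in $H(x, Dv_\lambda^{\eps}(x)) \le \omega_R(\eps)$ requires a modulus of continuity of $H(\cdot,p)$ \emph{uniform over the range of} $Dv_\lambda$, while here $|H(x,p)-H(y,p)|$ grows quadratically in $|p|$; without a local $L^\infty$ bound on $Dv_\lambda$ --- precisely what is missing --- the modulus cannot be controlled. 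The ``classical equivalence'' between convex viscosity subsolutions and $L$-domination is classical for coercive (Tonelli) Hamiltonians; in this degenerate setting it is exactly the kind of statement that requires proof rather than citation.

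The paper avoids the PDE altogether: it proves the inequality for $v_\lambda$ directly from its variational definition, concatenating the given arc $(\gamma,u)\in\Gamma_{a,b}^{x\to y}$ with a $\lambda$-optimal pair for $v_\lambda(y)$. This yields $v_\lambda(x)-v_\lambda(y)\le \int_a^b L(\gamma(t),u(t))\,dt -(b-a)\lambda v_\lambda(y)(1+o(1)) + \lambda$, and the error terms vanish along $\lambda_n\downarrow 0$ because $\lambda v_{\lambda}\to \mane =0$ (by {\bf (L3')} together with the Abelian--Tauberian theorem), giving $\chi \prec L$ in the limit. If you wish to keep your PDE route, you would need to prove the subsolution-implies-domination statement for this non-coercive $H$ from scratch (for instance by smoothing only in the horizontal directions, or by an argument along admissible trajectories), which is substantially heavier than the concatenation argument above.
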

\proof
Let $\chi$ be the critical solution given in \Cref{thm:convergence}, i.e., 
\begin{equation*}
\chi(x)=\lim_{n \to \infty} v_{\lambda_{n}}(x)	
\end{equation*}
where the limit is uniform on all compact subsets of $\R^{d}$. Recall that for any $\lambda > 0$
\begin{equation*}
v_{\lambda}(x)=\inf_{(\gamma,u) \in \Gamma_{0,\infty}^{x \to}(e^{\lambda t}\ dt)}\int_{0}^{\infty}{e^{-\lambda t}L(\gamma(t), u(t))\ dt}, \quad (x \in \R^{d}). 
\end{equation*}
From \Cref{thm:convergence} we know that $\chi$ is continuous and $\chi(x) \geq 0$ for any $x \in \R^{d}$. 
Hence, we only need to prove that $\chi \prec L.$ To do so, let $R \geq 0$ and let $x$, $y \in \overline{B}_{R}$. Fix $a$, $b \in \R$ and let $(\gamma,u) \in \Gamma_{a,b}^{x \to y}$. Let $(\gamma_{y}, u_{y}) \in \Gamma_{0,\infty}^{y \to}(e^{-\lambda t}\ dt)$ be $\lambda$-optimal for $v_{\lambda}(y)$, that is, 
\begin{equation*}
	\int_{0}^{\infty}{e^{-\lambda t}L(\gamma_{y}(t), u_{y}(t))\ dt} \leq v_{\lambda}(y) + \lambda.	
	\end{equation*}
and define the control 
\begin{equation*}
	\widetilde{u}(t)=
\begin{cases}
u(t+a), & t \in [0,b-a]
\\
u_{y}(t+a-b), & t \in (b-a, \infty).
\end{cases}	
\end{equation*}
Then, $(\widetilde\gamma, \widetilde{u}) \in \Gamma_{0,\infty}^{x \to}$ and $\widetilde\gamma(t)=\gamma_{y}(t+a-b)$ for all $t \geq b-a$. Therefore,
\begin{align*}
v_{\lambda}(x)-v_{\lambda}(y) \leq\ & \int_{0}^{\infty}{e^{-\lambda t}L(\widetilde\gamma(t), \widetilde{u}(t))\ dt} - \int_{0}^{\infty}{e^{-\lambda t}L(\gamma_{y}(t), u_{y}(t))\ dt} + \lambda
\\
=\ & \int_{0}^{b-a}{e^{-\lambda t}L(\gamma(t+a), u(t+a))\ dt} 
\\
+\ & \int_{b-a}^{\infty}{e^{-\lambda t}L(\widetilde\gamma(t), \widetilde{u}(t))\ dt}	- \int_{0}^{\infty}{e^{-\lambda t}L(\gamma_{y}(t), u_{y}(t))\ dt} + \lambda
\\
\leq\ & \int_{a}^{b}{L(\gamma(t), u(t))\ dt} + \left(e^{-\lambda(b-a)}-1 \right)\int_{0}^{\infty}{e^{-\lambda t}L(\gamma_{y}(t), u_{y}(t))\ dt} + \lambda
\\
=\ & \int_{a}^{b}{L(\gamma(t), u(t))\ dt} -(b-a)\lambda v_{\lambda}(y)(1+o(1)) + \lambda.
\end{align*}
So, taking $\lambda=\lambda_{n}$ in the previous estimate and passing to the limit we conclude that 
\begin{equation*}
\chi(x)-\chi(y) \leq \int_{a}^{b}{L(\gamma(t), u(t))\ dt}, 
\end{equation*}
which completes the proof. \qed

\begin{theorem}[{\bf Lax-Oleinik semigroup}]\label{thm:SRbounds}
	Assume {\bf (F1)}, {\bf (F2)} and  {\bf (L0)} -- {\bf (L3')}. Then, the following holds.
	\begin{enumerate}
	    \item For any $\varphi \in \mathcal{S}$ there exists a function $N_{\varphi}: \R^{d} \to \R$, bounded on all compact sets, such that for any $(t,x) \in [0, \infty) \times \R^{d}$ there exists $(\gamma_{x}, u_{x}) \in \Gamma_{0,t}^{\to x}$ satsfying
	    \begin{equation}\label{eq:sublevel}
	    \mathcal{F}_{\varphi}(\gamma_{x},u_{x}) \leq N_{\varphi}(x).
	     \end{equation}
	    %where we recall that $D(|x|) \geq 0$ is defined in \eqref{eq:ballboxincrease}. 
		\item For any $\varphi \in \mathcal{S}$ there exists a nondecreasing function $C_{\varphi}: [0,\infty) \to [0,\infty)$ such that for any $R \geq 0$, any $(t,x) \in [0, \infty) \times \overline{B}_{R}$, and any $(\gamma,u) \in \Gamma_{0,t}^{\to x}$ satisfying \eqref{eq:sublevel} we have that
	\begin{equation}\label{eq:SRdist}
	d_{\SR}(x, \gamma(0)) \leq C_{\varphi}(R).
	\end{equation}
	Moreover, one can take
	\begin{equation*}
	C_{\varphi}(R):= \beta(R)D(R) + \max_{x \in \overline{B}_{R}} \varphi(x).
	\end{equation*}
	\item For any $(t,x) \in [0,\infty) \times \R^{d}$ and any $\varphi \in \mathcal{S}$ the infimum in \eqref{eq:LAsemi} is attained\footnote{Notice that the existence of minimizing pairs $(\gamma, u) \in \Gamma_{0,t}^{\to x}$ follows by classical results in optimal control theory (see, for instance, \cite[Theorem 7.4.4]{bib:SC}).} and we have that $T_{t}\varphi(x) \geq 0$. 
	\item For any $\varphi \in \mathcal{S}$ and any $c\in\R$ we have that $T_{t}(\varphi + c)=T_{t}\varphi+c$  for all $t\geq 0$. 
	\item $T_{t}$ is a semigroup on $\mathcal{S}$, i.e., $T_{t}: \mathcal{S} \to \mathcal{S}$ and for any  $s$, $t \geq 0$ and any $\varphi \in \mathcal{S}$ 
	\begin{align*}
	T_{0}\varphi= \varphi, \quad 
	T_{s}(T_{t}\varphi)=\ T_{s+t}\varphi.
	\end{align*} 
	\item $T_{t}$ is continuous on $\mathcal{S}$ w.r.t. the topology induced by the uniform convergence on all compact subsets.
	\end{enumerate}
\end{theorem}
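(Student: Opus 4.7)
The plan is to treat the six items in sequence, with items (1) and (2) supplying the compactness needed for all the others. For item (1), I would build the competitor by case-splitting on the size of $t$ relative to $\delta^*(x)=d_{\SR}(x,x^*)$. When $t\ge\delta^*(x)$, take a sub-Riemannian geodesic $(\eta,v)\in\Gamma_{0,\delta^*(x)}^{x^*\to x}$ with $|v|\le 1$ given by \eqref{eq:subriem}, and concatenate the stationary control at $x^*$ on $[0,t-\delta^*(x)]$ (which costs $0$ since $L(x^*,0)=0$ by {\bf (L3')} and $f(x^*,0)=0$) with the shifted $(\eta,v)$ on $[t-\delta^*(x),t]$. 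When $t<\delta^*(x)$, take the stationary competitor $\gamma\equiv x$, $u\equiv 0$, admissible because $f(\cdot,0)\equiv 0$ for drift-less sub-Riemannian systems. In both cases {\bf (L1)} together with the bound $\delta^*(x)\le D(|x|)$ from \eqref{eq:ballboxincrease} controls the total cost by a locally bounded function $N_\varphi(x)$.

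For item (2), the main obstacle is to bound $d_{\SR}(\gamma(0),x)$ uniformly in $t$, since the naive length estimate $\int_0^t|u|\,ds\le\sqrt{t}\,\|u\|_2$ blows up. My plan is to combine an excursion-time bound outside $\K$ with controllability inside $\K$. From $\mathcal{F}_{\varphi}(\gamma,u)\le N_\varphi(x)$ and $\varphi\ge 0$, I get $\int_0^t L\,ds\le N_\varphi(x)$, hence by \eqref{eq:L0} (with $u^*=0$) also $\|u\|_2^2\le 2\ell_1 N_\varphi(x)$. Assumptions {\bf (L2)} and {\bf (L3')} give a constant $\Theta>0$ with $L(\gamma,u)\ge\Theta$ on $\{\gamma\notin\K\}$, so the excursion time $\tau_{\mathrm{out}}:=\LL^1(\{s:\gamma(s)\notin\K\})$ is at most $N_\varphi(x)/\Theta$. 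If $\gamma$ never enters $\K$, then $t\le\tau_{\mathrm{out}}$ is bounded and so is $\int_0^t|u|\,ds$ by Cauchy--Schwarz. Otherwise, picking $s_0\in[0,t]$ with $\gamma(s_0)\in\K$, I apply the triangle inequality $d_{\SR}(\gamma(0),x)\le d_{\SR}(\gamma(0),\gamma(s_0))+\mathrm{diam}_{\SR}(\K)+d_{\SR}(x^*,x)$: the last term is at most $D(R)$, the middle one is finite by \Cref{cor:ballbox}, and the first is bounded by the length of $\gamma|_{[0,s_0]}$, which by Cauchy--Schwarz and the excursion-time bound is at most $\sqrt{2\ell_1/\Theta}\,N_\varphi(x)$.

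Items (3)--(5) then follow by standard arguments. For (3), attainment of the infimum is via the direct method in the calculus of variations, once item (2) confines initial points to a fixed sub-Riemannian ball and $L$ is strictly convex in $u$ by {\bf (L1)}; non-negativity $T_t\varphi\ge\varphi\ge 0$ is immediate from $\varphi\prec L$. Item (4) is algebraic in the definition \eqref{eq:LAsemi}. In (5), $T_0\varphi=\varphi$ is immediate, while the semigroup law $T_s(T_t\varphi)=T_{s+t}\varphi$ is the dynamic programming principle, obtained by splitting admissible pairs at the intermediate time. To verify $T_t\varphi\in\mathcal{S}$, non-negativity is item (3); the domination $T_t\varphi\prec L$ is obtained, for $(\eta,v)\in\Gamma_{a,b}^{x\to y}$ with $b-a\le t$, by splicing an $\varepsilon$-minimizer $(\zeta,w)\in\Gamma_{0,t}^{\to x}$ with the shifted $\eta$ on $[t-(b-a),t]$ and bounding $\varphi(\zeta(b-a))-\varphi(\zeta(0))$ by $\int_0^{b-a}L(\zeta,w)\,ds$ via $\varphi\prec L$, with larger intervals handled by subdivision; continuity of $T_t\varphi$ in $x$ then follows from item (2) and the $d_{\SR}$-Lipschitz estimates analogous to \Cref{prop:uniform}.

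Finally, for item (6), suppose $\varphi_n\to\varphi$ uniformly on compact sets and fix $R\ge 0$; for $x\in\overline{B}_R$, an $\varepsilon$-minimizer $(\gamma,u)$ of $T_t\varphi(x)$ provided by (1)--(3) has $\gamma(0)$ inside the sub-Riemannian ball of radius $C_\varphi(R)$ around $x$, hence inside a fixed Euclidean ball $\overline{B}_{R'}$ by \Cref{cor:ballbox}, so
\[
T_t\varphi_n(x)-T_t\varphi(x)\le\|\varphi_n-\varphi\|_{\infty,\overline{B}_{R'}}+\varepsilon,
\]
and the symmetric estimate yields uniform convergence on $\overline{B}_R$. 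The entire argument hinges on item (2); once that uniform sub-Riemannian compactness is in place, the rest is a fairly standard Lax--Oleinik package.
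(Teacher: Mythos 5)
Your treatment of items (1) and (3)--(6) is essentially the paper's: the same two competitors in (1) (idle at $x^*$ then run a unit-speed sub-Riemannian geodesic when $t$ is large, stay put at $x$ when $t$ is small), the same use of $\varphi\prec L$ for non-negativity and of the standard splicing/DPP arguments for (4)--(5) (which the paper in fact delegates to \cite{bib:FM, bib:FA}), and the same two-sided comparison via confinement of initial points for (6); there, just make explicit, as the paper does, that locally uniform convergence of $\varphi_n$ makes the constants $C_{\varphi_n}(R)$ bounded uniformly in $n$, so the symmetric estimate runs on one fixed ball.

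The genuinely different step is item (2). The paper obtains \eqref{eq:SRdist} in one line: by \eqref{eq:L0}, $\mathcal{F}_\varphi(\gamma,u)\ge\frac{1}{2\ell_1}\int_0^t|u(s)|^2\,ds$, and this integral is bounded below by the sub-Riemannian energy $e_{\SR}(\gamma(0),x)=d_{\SR}(\gamma(0),x)^2$ (invoking the remark after \eqref{eq:energy} that the energy infimum may be computed over any time horizon), whence $\frac{1}{2\ell_1}d_{\SR}(x,\gamma(0))^2\le N_\varphi(x)$; this is precisely how the displayed formula for $C_\varphi(R)$ arises. You instead argue via excursions: {\bf (L3')} gives $L\ge\Theta>0$ off $\K$, so $\LL^{1}(\{s:\gamma(s)\notin\K\})\le N_\varphi(x)/\Theta$, and Cauchy--Schwarz on that set controls the sub-Riemannian length accrued outside $\K$, after which the triangle inequality through $\K$ and $x^*$ concludes. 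This is correct and arguably more robust, since it does not lean on the time-independence of the energy infimum; but two caveats. First, in the case where $\gamma$ meets $\K$ you must take $s_0$ to be the \emph{first} entry time into $\K$, so that $[0,s_0)$ lies in the excursion set and $s_0\le N_\varphi(x)/\Theta$; for an arbitrary $s_0$ with $\gamma(s_0)\in\K$ the estimate $\int_0^{s_0}|u|\,ds\le\sqrt{s_0}\,\|u\|_2$ is useless because $s_0$ may be of order $t$. Second, your route yields a bound of the form $\sqrt{2\ell_1/\Theta}\,\sup_{\overline{B}_R}N_\varphi+\mathrm{diam}_{\SR}(\K)+D(R)$ rather than the specific expression $C_\varphi(R)=\beta(R)D(R)+\max_{\overline{B}_R}\varphi$ asserted in the ``Moreover'' clause; this is harmless for every later use of item (2) (only the existence of a nondecreasing, locally finite $C_\varphi$ is needed), but that particular formula is not literally recovered by your argument.
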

\begin{remarks}\em
	 We recall that, a set $\mathcal{K}$ is compact in $(\R^{d}, d_{\SR})$ if and only if $\mathcal{K}$ is compact in $\R^{d}$ w.r.t. the Euclidean distance (see, e.g., \cite[Theorem 3.31]{bib:ABB}).
	 % and this will not create ambiguity in estimating the excursion of minimizers w.r.t. $d_{\SR}$ or the classical Euclidean distance. 
\end{remarks}
\proof
We begin by proving $(1)$.  To do so, we consider two cases: first, we take $(t, x) \in [D(|x|),\infty) \times \R^{d}$ and, then, $(t, x) \in [0,D(|x|)) \times \R^{d}$. Recall that $D(\cdot)$ is defined in \eqref{eq:ballboxincrease} and satisfies $\delta^{*}(x) \leq D(|x|)$. 

Define the function $N_{\varphi}: \R^d \to \R$ as 
\begin{equation*}
N_{\varphi}(x)=	
\begin{cases}
	\varphi(x^{*}) + D(|x|)\beta(\la(|x|)), & (t,x) \in [D(|x|), \infty) \times \R^{d}
	\\
	\varphi(x) + D(|x|) \beta(|x|), & (t,x) \in [0,D(|x|)) \times \R^{d}
\end{cases}
\end{equation*}
where $\la(|x|):=(|x^{*}| + c_{f}\delta^{*}(x))e^{c_{f}\delta^{*}(x)}$.
Note that, since $\varphi \in \mathcal{S}$ we deduce that $N_{\varphi}$ is bounded on any compact subset of $\R^{d}$. 

%and observe that, in both cases, $N_{\chi}(\cdot)$ is nondecreasing since $D(\cdot)$ is nondecreasing as observed in \eqref{eq:ballboxincrease}.
We now proceed with the first part of the proof, i.e., we show that for any $(t, x) \in [D(|x|),\infty) \times \R^{d}$ there exists $(\gamma_{x}, u_{x}) \in \Gamma_{0,t}^{\to x}$ such that 
\begin{equation*}
\mathcal{F}_{\varphi}(\gamma_{x},u_{x}) \leq N_{\varphi}(x).	
\end{equation*}
Let $(\gamma_{0},u_{0}) \in \Gamma_{0,\delta^{*}(x)}^{x^{*}\to x}$ be optimal for \eqref{eq:subriem} and define the control 
\begin{equation*}
u_{x}(s)=
\begin{cases}
0, & s \in [0,t-\delta^{*}(x))
\\
u_{0}(s-t+\delta^{*}(x)), & s \in [t-\delta^{*}(x),t]
\end{cases}	
\end{equation*}
so that $(\gamma_{x}, u_{x}) \in \Gamma_{0,t}^{x^{*} \to x}$. Then
\begin{align*}
 \mathcal{F}_{\varphi}(\gamma_{x},u_{x}) =\ & \varphi(x^{*})  + \int_{t-\delta^{*}(x)}^{t}{L(\gamma_{0}(s-t+\delta^{*}(x)), u_{0}(s-t+\delta^{*}(x)))\ ds} 
	 \\ 
	 =\  & \varphi(x^{*}) + \int_{0}^{\delta^{*}(x)}{L(\gamma_{0}(s), u_{0}(s))\ ds} \leq\  \varphi(x^{*}) + \int_{0}^{\delta^{*}(x)}{L(\gamma_{0}(s), u_{0}(s))\ ds}
\end{align*}
Let us estimate the rightmost term above. Recalling that $|u_{0}(s)| \leq 1$ for any $s \in [0,\delta^{*}(x)]$ we have that 
\begin{equation*}
|\gamma_{0}(t)|\leq (|x^{*}| + c_{f}\delta^{*}(x))e^{c_{f}\delta^{*}(x)}=\la(|x|), \quad \forall\ t \in [0,\delta^{*}].
\end{equation*}
Thus, we get
\begin{align*}
	\int_{0}^{\delta^{*}(x)}{L(\gamma_{0}(s), u_{0}(s))\ ds} \leq \delta^{*}(x)\beta(\la(|x|)) \leq D(|x|)\beta(\la(|x|)).
\end{align*}
Hence, we obtain 
 \begin{equation}\label{eq:LAabove1}
 \mathcal{F}_{\varphi}(\gamma,u) \leq \varphi(x^{*}) + D(|x|)\beta(\la(|x|))
 \end{equation}
 which completes the proof of (1) for $(t,x) \in [D(|x|), \infty) \times \R^{d}$. 
 
 We now consider the case $(t,x) \in [0,D(|x|)) \times \R^{d}$. Let $(\gamma_{x}, u_{x}) \in \Gamma_{0,t}^{\to x}$ be defined as
 \begin{equation*}
 u_{x}(s)=0, \quad \gamma_{x}(s) \equiv x, \quad s \in [0,D(|x|)).	
 \end{equation*}
 Then
 \begin{align}\label{eq:LAabove2}
 \mathcal{F}_{\varphi}(x,0) \leq\ \varphi(x) + tL(x,0) \leq \varphi(x) + D(|x|) \beta(|x|).
 \end{align} 
This completes the proof of (1). 
%Therefore, combining \eqref{eq:LAbelow} with \eqref{eq:LAabove} we have that 
%\begin{equation*}
%\frac{1}{2\ell_{1}}d_{\SR}(x,\gamma_{x}(0)) \leq 	D(R)(\ell_{1} + \Lambda(R)^{\beta} + \ell_{2}) + 1 + \varphi(x^{*}) - \varphi(\gamma_{x}(0)).
%\end{equation*}

 %and from below of the function $T_{t}\varphi(x)-\mane t$ for any $x \in \overline{B}_{R}$ and any $t \geq 0$ for a given function $\varphi \in \mathcal{S}$.  

	We now proceed with the proof of (2). We begin by deriving a lower bound for $\mathcal{F}_{\varphi}(\gamma,u)$, for any $(\gamma,u) \in \Gamma_{0,t}^{\to x}$ satisfying \eqref{eq:sublevel}.
	Similarly to the proof of (1) we analyze two cases: first, we show that the conclusion holds for any $(t,x) \in [D(|x|), \infty) \times \R^{d}$ and then we do the same for  $(t,x) \in [0,D(|x|)) \times \R^{d}$. Let $(t,x) \in [D(|x|), \infty) \times \R^{d}$ and let $(\gamma, u) \in \Gamma_{0,t}^{\to x}$ satisfy \eqref{eq:sublevel}. Then, by \eqref{eq:L0} we have that 
	\begin{align}\label{eq:LAbelow}
	\begin{split}
	 \mathcal{F}_{\varphi}(\gamma,u) \geq\ & \varphi(\gamma(0)) + \frac{1}{2\ell_{1}}\int_{0}^{t}{|u(s)|^{2}\ ds} \geq\ \frac{1}{2\ell_{1}}d_{\SR}(x,\gamma(0))^{2} .
	 \end{split}
	\end{align}
Therefore, combining \eqref{eq:LAbelow} with \eqref{eq:LAabove1} we have that 
\begin{align}\label{eq:cphi1}
\frac{1}{2\ell_{1}}d_{\SR}(x,\gamma(0))^{2} \leq\ &	D(|x|)\beta(|x|) + \varphi(x^{*}) 
 \end{align}
which implies \eqref{eq:SRdist} for $(t,x) \in [D(|x|), \infty) \times \R^{d}$ by the continuity of $\varphi$. 

Now, let $(t,x) \in [0,D(|x|)) \times \R^{d}$ and observe that inequality  \eqref{eq:LAbelow} still holds true. So, we combine such an estimate with \eqref{eq:LAabove2} to obtain
\begin{align}\label{eq:cphi2}
\frac{1}{2\ell_{1}}d_{\SR}(x,\gamma(0))^{2} \leq\ & D(|x|)\beta(|x|) + \varphi(x)
\end{align}
which implies \eqref{eq:SRdist} for any $(t,x) \in [0,D(|x|)) \times \R^{d}$, again, by the continuity of $\varphi$. Hence, from \eqref{eq:cphi1} and \eqref{eq:cphi2} we get, for any $R \geq 0$, any $(t, x) \in [0,\infty) \times \overline{B}_{R}$ and any $(\gamma, u) \in \Gamma_{0,t}^{\to x}$ satisfying \eqref{eq:sublevel},
\begin{equation*}
d_{\SR}(x, \gamma(0)) \leq C_{\varphi}(R):= \beta(R)D(R) + \max_{x \in \overline{B}_{R}} \varphi(x).
\end{equation*}

Now, given $(t,x) \in [0,\infty) \times \R^{d}$,  let $\varphi \in \mathcal{S}$ and let $(\gamma, u) \in \Gamma_{0,t}^{\to x}$ be optimal for the minimization problem in \eqref{eq:LAsemi}. Then, we have that 
\begin{align*}
T_{t}\varphi(x) \geq\ & \varphi(\gamma(0)) + \frac{1}{2\ell_{1}}\int_{0}^{t}{|u(s)|^{2}\ ds} \geq\ 0.
\end{align*}
%Notice that the existence of minimizing pairs $(\gamma, u) \in \Gamma_{0,t}^{\to x}$ follows by classical results in optimal control theory (see, for instance, \cite[Theorem 7.4.4]{bib:SC}). 
This completes the proof of ($3$). Then, (4) is a direct consequence of the definition of $T_{t}\varphi$.

As for (5), we already know that for any $(t,x) \in [0,\infty) \times \R^{d}$ and any $\varphi \in \mathcal{S}$ we have that $T_{t}\varphi(x) \geq 0$. The proof of the fact that  $T_{t}\varphi(x) \prec L$ and the semigroup property is similar to the proof of \cite[(1) of Proposition 3.3]{bib:FM} and \cite[Proposition 4.6.2.]{bib:FA} and will be omitted here.

We finally show (6). Let $R \geq 0$, let $x \in \overline{B}_{R}$ and let $t \geq 0$. Let $\{\varphi_{n}\}_{n \in \N} \in \mathcal{S}$ and let $\varphi 	\in \mathcal{S}$ be such that $\varphi_{n} \to \varphi$ locally uniformly. Then, on the one hand, taking $(\gamma^{\varphi}_{x}, u^{\varphi}_{x}) \in \Gamma_{0,t}^{\to x}$ optimal for the infimum defining $T_{t}\varphi(x)$ we the  obtain 
\begin{equation*}
T_{t}\varphi_{n}(x)- T_{t}\varphi(x) \leq \varphi_{n}(\gamma^{\varphi}_{x}(0)) - \varphi(\gamma^{\varphi}_{x}(0)).	
\end{equation*}
Hence, from (2) we deduce that there exists a constant $C_{\varphi}(R) \geq 0$ such that 
\begin{equation}\label{eq:cont1}
	T_{t}\varphi_{n}(x)- T_{t}\varphi(x) \leq \|\varphi_{n}(\cdot) - \varphi(\cdot)\|_{\infty, \overline{B}_{C_{\varphi}(R)}}.
\end{equation}
On the other hand, let $(\gamma^{n}_{x}, u^{n}_{x}) \in \Gamma_{0,t}^{\to x}$ optimal for $T_{t}\varphi_{n}(x)$. Then, from (2) there exists a constant $C_{\varphi_{n}}(R) \geq 0$ such that 
\begin{equation}\label{eq:cont2}
	T_{t}\varphi(x)- T_{t}\varphi_{n}(x) \leq \varphi(\gamma^{n}_{x}(0)) - \varphi_{n}(\gamma^{n}_{x}(0)) \leq \|\varphi_{n}(\cdot) - \varphi(\cdot)\|_{\infty, \overline{B}_{C_{\varphi_{n}}(R)}}.
\end{equation}
Moreover, owing to the locally uniform convergence of the sequence $\varphi_{n}$ the constant $C_{\varphi_{n}}(R)$ is independent of $n \in \N$. Therefore,  the proof of (6) is completed by combining \eqref{eq:cont1} and \eqref{eq:cont2}. \qed

We call $T_{t}$ the Lax-Oleinik semigroup, adapted to sub-Riemannian systems. In order to find a critical solution $\overline\chi$ such that
 \[\overline\chi(x)=T_{t}\overline\chi(x), \quad \forall\ t \geq 0,\,\, \forall\ x \in \R^{d}\] 
we will take  the critical solution $\chi$ in $\mathcal{S}$ given by \Cref{thm:convergence} and show that  $T_{t}\chi(x)$ converges as the $t \to \infty$  to a function $\overline\chi$. This will be accomplished in two steps, which are the object of our next proposition and of the following theorem. 

%Before doing so, we recall the definition of dominated functions.

\begin{proposition}\label{prop:LAequi}
	Assume {\bf (F1)}, {\bf (F2)} and  {\bf (L0)} -- {\bf (L3')}. Then, for any $R \geq 0$ we have that
	\begin{itemize}
		\item[($i$)] $\{T_{t}\chi\}_{t \geq 0}$ is equibounded on $\overline{B}_{R}$;
		\item[($ii$)] $\{T_{t}\chi\}_{t \geq 1}$ is equicontinuous on $\overline{B}_{R}$.	
		\end{itemize}
\end{proposition}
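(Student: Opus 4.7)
Part $(i)$ is essentially a direct corollary of \Cref{thm:SRbounds}. By statement $(1)$ applied to $\varphi=\chi\in\mathcal{S}$, for every $(t,x)\in[0,\infty)\times\R^d$ there is a competitor $(\gamma,u)\in\Gamma_{0,t}^{\to x}$ with $\mathcal{F}_\chi(\gamma,u)\le N_\chi(x)$, and hence $T_t\chi(x)\le N_\chi(x)$. Since $N_\chi$ is bounded on compact sets, $\sup_{t\ge0,x\in\overline{B}_R}T_t\chi(x)<\infty$. The lower bound $T_t\chi(x)\ge 0$ is statement $(3)$ of the same theorem.

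For $(ii)$ the plan is to build a competitor for $T_t\chi(y)$ out of an optimizer for $T_t\chi(x)$ by shifting it in time by $\delta:=d_{\SR}(x,y)$ and pasting a short sub-Riemannian geodesic at the end. Restricting to $x,y\in\overline{B}_R$ with $d_{\SR}(x,y)\le 1$, and using $t\ge 1\ge\delta$, let $(\gamma_x,u_x)\in\Gamma_{0,t}^{\to x}$ be optimal for $T_t\chi(x)$ (existing by $(3)$ of \Cref{thm:SRbounds}) and let $(\beta,w)\in\Gamma_{0,\delta}^{x\to y}$ be given by the characterization \eqref{eq:subriem} of $d_{\SR}$, so that $|w(s)|\le 1$ a.e. Define $(\tilde\gamma,\tilde u)\in\Gamma_{0,t}^{\to y}$ by
\begin{equation*}
(\tilde\gamma,\tilde u)(s)=
\begin{cases}
(\gamma_x,u_x)(s+\delta), & s\in[0,t-\delta],\\
(\beta,w)(s-(t-\delta)), & s\in[t-\delta,t],
\end{cases}
\end{equation*}
which is continuous at $s=t-\delta$ since $\gamma_x(t)=x=\beta(0)$.

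The cost comparison, after a change of variables, gives
\begin{equation*}
T_t\chi(y)-T_t\chi(x)\le \bigl[\chi(\gamma_x(\delta))-\chi(\gamma_x(0))\bigr]-\int_0^\delta L(\gamma_x,u_x)\,ds+\int_0^\delta L(\beta,w)\,ds.
\end{equation*}
Since $\chi\in\mathcal{S}$ satisfies $\chi\prec L$ (by the preceding lemma), the bracketed term is at most $\int_0^\delta L(\gamma_x,u_x)\,ds$, and the two terms involving $\gamma_x$ cancel. Hence $T_t\chi(y)-T_t\chi(x)\le\int_0^\delta L(\beta,w)\,ds$. Since $|w|\le 1$ and $\beta(0)=x\in\overline{B}_R$, \Cref{lem:boundedtrajectories} gives a bound $|\beta(s)|\le\lambda(R)$ independent of $\delta\in[0,1]$, so $(\mathbf{L1})$ yields $L(\beta,w)\le 2\beta(\lambda(R))$ and $T_t\chi(y)-T_t\chi(x)\le 2\beta(\lambda(R))\,d_{\SR}(x,y)$. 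Interchanging $x$ and $y$ gives the same bound, and \Cref{cor:ballbox} converts this into $|T_t\chi(x)-T_t\chi(y)|\le C_R|x-y|^{1/r(\overline{B}_R)}$ uniformly in $t\ge 1$, which is the desired equicontinuity.

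The only subtlety is the cancellation step: were it not for $\chi\prec L$, one would need precise control of $\gamma_x$ on the entire interval $[0,t]$, which cannot be made uniform in $t$. The domination property reduces the estimate to the short sub-Riemannian segment $(\beta,w)$, where controls are bounded in $L^\infty$ and the trajectory lies in a compact set depending only on $R$, making the whole argument independent of $t$.
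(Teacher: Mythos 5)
Your proof is correct. Part ($i$) is essentially the paper's own argument (citing the bounds $T_t\chi\le N_\chi$ and $T_t\chi\ge 0$ from \Cref{thm:SRbounds} rather than re-deriving them), and part ($ii$) uses the same basic construction as the paper --- shift the optimizer by $\delta=d_{\SR}(x,y)$ and append a sub-Riemannian geodesic, which is exactly why the restriction $t\ge 1\ge\delta$ appears --- but you handle the key leftover term differently. The paper keeps the oscillation $\chi(\gamma_y(\delta))-\chi(\gamma_y(0))$ and estimates it directly: it bounds $\|u_y\|_2$ via \eqref{eq:L0} and the equiboundedness from ($i$), bounds $|\gamma_y(0)|,|\gamma_y(\delta)|$ via \Cref{thm:SRbounds}(2) and \Cref{lem:L2bounds}, and then uses the $d_{\SR}$-Lipschitz continuity of $\chi$ from \Cref{thm:convergence} together with \Cref{cor:ballbox} and the H\"older-in-time estimate of \Cref{lem:L2bounds}, ending with a modulus of order $d_{\SR}(x,y)^{1/(2r_y)}$. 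You instead invoke the domination $\chi\prec L$ (available from the lemma showing $\chi\in\mathcal{S}$, so no circularity) to cancel that oscillation against $-\int_0^\delta L(\gamma_x,u_x)\,ds$, reducing everything to the short geodesic leg where controls are bounded in $L^\infty$ and the trajectory stays in a compact set depending only on $R$. This buys a shorter proof, avoids all quantitative estimates on the optimal trajectory, yields the sharper modulus $C\,d_{\SR}(x,y)$ (hence $1/r(\overline{B}_R)$-H\"older in the Euclidean distance), and in fact works verbatim for any $\varphi\in\mathcal{S}$, not just the particular corrector $\chi$; the paper's route, by contrast, leans on the specific $d_{\SR}$-Lipschitz regularity of $\chi$ rather than on domination. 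Your restriction to $d_{\SR}(x,y)\le 1$ is harmless for equicontinuity, since by \Cref{cor:ballbox} small Euclidean distance on $\overline{B}_R$ forces $d_{\SR}(x,y)\le 1$.
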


%Before proving \Cref{prop:LAequi} we need the following preliminary result on the excursion of minimizers of the function $T_{t}\chi(x)-\mane t$, for any $t \geq 0$ and any $x$ contained in a compact set.

\proof 

In order to prove ($i$) we argue as in \Cref{thm:SRbounds}. Let $R \geq 0$, let $t \geq 0$ let $x \in \overline{B}_{R}$. Let $(\gamma_{x}, u_{x}) \in \Gamma_{0,t}^{\to x}$ be optimal for $T_{t}\chi(x)$. Since $\chi$ is Lipschitz continuous w.r.t. $d_{\SR}$ the following holds
\begin{equation}\label{eq:Chibounds1}
\chi(x) \leq \ell_{R} d_{\SR}(x,0), \quad \forall\ x \in \overline{B}_{R}.
\end{equation}
 Then, since $\chi \geq 0$ from \eqref{eq:L0} we obtain 
\begin{align}\label{eq:bound1}
\begin{split}
 T_{t}\chi(x) \geq\   \chi(\gamma(0)) + \frac{1}{2\ell_{1}}\int_{0}^{t}{|u_{x}(s)|^{2}\ ds}  =\ \frac{1}{2\ell_{1}}d_{\SR}(x,\gamma_{x}(0))^{2}
 \end{split}
\end{align}
which is bounded by (2) in \Cref{thm:SRbounds}.
Thus, if $t \in [D(R), \infty)$, then by \eqref{eq:LAabove1} and \eqref{eq:Chibounds1} we obtain 
\begin{align}\label{eq:bound2}
\begin{split}
 T_{t}\chi(x) \leq \chi(x^{*}) + D(R)\beta(R) \leq\ \ell_{R_{\K}}d_{\SR}(x^{*},0)  + D(R)\beta(R)	
\end{split}
\end{align}
where $R_{\K}$ stands for the diameter of $\K$. 
On the other hand, if $t \in [0,D(R))$, by \eqref{eq:LAabove2} and \eqref{eq:Chibounds1} we get
\begin{align}\label{eq:bound3}
\begin{split}
 T_{t}\chi(x)  \leq \chi(x) + D(R)\beta(R) \leq\ \ell_{R}d_{\SR}(x,0)  + D(R)\beta(R).	
\end{split}
\end{align}
Hence, combining \eqref{eq:bound1} with \eqref{eq:bound2} and, also, \eqref{eq:bound1} with \eqref{eq:bound3} the proof of ($i$) is complete. 

We now proceed to show ($ii$), that is, the equicontinuity of $T_{t}\chi(x) $ for $t \geq 1$. Let $R \geq 0$, let $x$, $y \in \overline{B}_{R}$ and let $t \geq 1$. To begin with, assume that $d_{\SR}(x,y) > 1$. Then, we have that 
\begin{align*}
|T_{t}\chi(x)-T_{t}\chi(y)| \leq 2 \| T_{t}\chi\|_{\infty, \overline{B}_{R}} 	\leq 2\| T_{t}\chi\|_{\infty, \overline{B}_{R}} d_{\SR}(x,y).
\end{align*}

Next, suppose $d_{\SR}(x,y) \leq 1$ so that $d_{\SR}(x,y) \leq t$. Let $(\gamma_{0}, u_{0}) \in \Gamma_{0, d_{\SR}(x,y)}^{y \to x}$ be optimal for \eqref{eq:subriem} and let $(\gamma_{y}, u_{y}) \in \Gamma_{0,t}^{\to y}$ be optimal for $T_{t}\chi(y)$. Define the control 
\begin{equation*}
\widetilde{u}(s)=
\begin{cases}
	u_{y}(s+d_{\SR}(x,y)), & s \in [0,t-d_{\SR}(x,y)]
	\\
	u_{0}(s-t+d_{\SR}(x,y)), & s \in (t-d_{\SR}(x,y), t]
\end{cases}	
\end{equation*}
and call $\widetilde\gamma$ the corresponding trajectory, that is, $(\widetilde\gamma, \widetilde{u}) \in \Gamma_{0,t}^{\to x}$. Note that $\widetilde{u}$ can be used to estimate $T_{t}\chi(x)$ from above. We have that 
\begin{align}\label{eq:equicont}
\begin{split}
& T_{t}\chi(x)-T_{t}\chi(y) 
\\
\leq\ & \chi(\widetilde\gamma(0)) - \chi(\gamma_{y}(0)) + \int_{0}^{t}{L(\widetilde\gamma(s), \widetilde{u}(s))\ ds} - \int_{0}^{t}{L(\gamma_{y}(s), u_{y}(s))\ ds}
\\
=\ & \chi(\gamma_{y}(d_{\SR}(x,y))) - \chi(\gamma_{y}(0))
+ \int_{0}^{t}{L(\gamma_{y}(s), u_{y}(s))\ ds} 
\\
-\ & \int_{0}^{d_{\SR}(x,y)}{L(\gamma_{y}(s), u_{y}(s))\ ds} + \int_{0}^{d_{\SR}(x,y)}{L(\gamma_{0}(s), u_{0}(s))\ ds} - \int_{0}^{t}{L(\gamma_{y}(s), u_{y}(s))\ ds}
\\
=\ & \chi(\gamma_{y}(d_{\SR}(x,y))) - \chi(\gamma_{y}(0)) - \int_{0}^{d_{\SR}(x,y)}{L(\gamma_{y}(s), u_{y}(s))\ ds} + \int_{0}^{d_{\SR}(x,y)}{L(\gamma_{0}(s), u_{0}(s))\ ds}.
\end{split}
\end{align}
We estimate first the integral terms. By \eqref{eq:L0} we immediately obtain 
\begin{equation*}
\int_{0}^{d_{\SR}(x,y)}{L(\gamma_{y}(s), u_{y}(s))\ ds} \geq 0.
\end{equation*}
Moreover, since $\|u_{0}\|_{\infty, [0, d_{\SR}(x,y)]} \leq 1$ we have that 
\begin{equation*}
|\gamma_{0}(t)|\leq (|x| + c_{f}d_{\SR}(x,y))e^{c_{f}d_{\SR}(x,y)}, \quad \forall\ t \in [0,d_{\SR}(x,y)].
\end{equation*}
%from \Cref{lem:boundedtrajectories} we deduce that 
%\begin{equation*}
%|\gamma_{y}(s)| \leq (|y| + c_{f}d_{\SR}(x,y))e^{c_{f}d_{\SR}(x,y)} =:%\Lambda(R), \quad \forall\ s \in [0,d_{\SR}(x,y)]	
%\end{equation*}
So, we get
\begin{equation*}
\int_{0}^{d_{\SR}(x,y)}{L(\gamma_{0}(s), u_{0}(s))\ ds} \leq d_{\SR}(x,y)\big(|x| + c_{f}d_{\SR}(x,y)\big)e^{c_{f}d_{\SR}(x,y)}.
\end{equation*}
Combining both inequalities we have that 
\begin{align}\label{eq:equi1}
\begin{split}
	 -\ & \int_{0}^{d_{\SR}(x,y)}{L(\gamma_{y}(s), u_{y}(s))\  ds} +\  \int_{0}^{d_{\SR}(x,y)}{L(\gamma_{0}(s), u_{0}(s))\ ds} 
	 \\
	 \leq\ &  d_{\SR}(x,y)\big(|x| + c_{f}d_{\SR}(x,y)\big)e^{c_{f}d_{\SR}(x,y)}.
	\end{split}
\end{align}
Therefore, in order to complete the proof we need to estimate 
\begin{equation}\label{eq:oscillation}
\chi(\gamma_{y}(d_{\SR}(x,y))) - \chi(\gamma_{y}(0)). 
\end{equation}
First, we claim that $|\gamma_{y}(0)|$ and $|\gamma_{y}(d_{\SR}(x,y))|$ are bounded. Indeed, observe that from (2) in \Cref{thm:SRbounds} and the equivalence of the sub-Riemannian topology with the Euclidean one we deduce that 
\begin{equation*}
|\gamma_{y}(0)| \leq 2\max\{R, C_{\chi}(R)\}. 	
\end{equation*}
Moreover, by \Cref{lem:L2bounds} we know that 
\begin{equation*}
|\gamma_{y}(s)| \leq \kappa(\|u_{y}\|_{2},1)(1+|\gamma_{y}(0)|), \quad \forall\ s \in [0,d_{\SR}(x,y)].	
\end{equation*}
So, in particular, 
\begin{equation*}
	|\gamma_{y}(d_{\SR}(x,y))| \leq\ \kappa(\|u_{y}\|_{2},1)(1+|\gamma_{y}(0)|).
\end{equation*}
We claim that $\|u_{y}\|_{2, [0,d_{\SR}(x,y)]}$ is bounded by a constant that only depends on $R$. Indeed, from ($i$) we know that $T_{t}\chi(y)$ is locally uniformly bounded and by \eqref{eq:L0} we know that 
\begin{align*}
T_{t}\chi(y)  \geq\  \chi(\gamma_{y}(0)) + \frac{1}{2\ell_{1}}\int_{0}^{t}{|u_{y}(s)|^{2}\ ds} \geq\  \frac{1}{2\ell_{1}}\int_{0}^{t}{|u_{y}(s)|^{2}\ ds}. 
\end{align*}
Thus, we obtain
\begin{equation*}
	\frac{1}{2\ell_{1}}\int_{0}^{t}{|u_{y}(s)|^{2}\ ds} \leq \|T_{t}\chi(y)\|_{\infty, \overline{B}_{R}}
\end{equation*}
and this proves the claim since $T_{t}\chi$ is locally equibounded by ($i$). For simplicity of notation, let $R_{y} \geq 0$ be such that
\begin{equation*}
|\gamma_{y}(d_{\SR}(x,y))| \leq R_{y}, \quad |\gamma_{y}(0)|\leq R_{y},
\end{equation*}
and denote $r_{y} \geq 1$ the degree of nonholonomy associated with the compact set $\overline{B}_{R_{y}}$. 

Since $\chi$ is Lipschitz continuous w.r.t. $d_{\SR}$ we get
\begin{equation*}
	\chi(\gamma_{y}(d_{\SR}(x,y))) - \chi(\gamma_{y}(0)) \leq \ell_{R_{y}}d_{\SR}(\gamma_{y}(d_{\SR}(x,y)), \gamma_{y}(0)).
\end{equation*}
Then, by \Cref{cor:ballbox} we have that 
\begin{align}\label{eq:oscillation1}
	\chi(\gamma_{y}(d_{\SR}(x,y))) - \chi(\gamma_{y}(0)) \leq \widetilde{c}_{2}|\gamma_{y}(d_{\SR}(x,y))- \gamma_{y}(0)|^{\frac{1}{r_{y}}}
\end{align}
where $\widetilde{c}_{2}$ depends only on $R_{y}$. Moreover, from \Cref{lem:L2bounds}  we have that 
\begin{equation*}
|\gamma_{y}(d_{\SR}(x,y))-\gamma_{y}(0)| \leq \kappa\big(\|u_{y}\|_{2, [0,d_{\SR}(x,y)]}, d_{\SR}(x,y)\big)(1+|\gamma_{y}(0)|)d_{\SR}(x,y)^{\frac{1}{2}}.
\end{equation*}
Hence, we conclude that there exists a constant $C^{\prime}_{R} \geq 0$ such that 
\begin{equation}\label{eq:oscillation}
\chi(\gamma_{y}(d_{\SR}(x,y))-\chi(\gamma_{y}(0)) \leq C^{\prime}_{R}d_{\SR}(x,y)^{\frac{1}{2r_{y}}}.	
\end{equation}
Therefore, combining \eqref{eq:equicont}, \eqref{eq:equi1} and \eqref{eq:oscillation} we obtain
\begin{equation*}
T_{t}\chi(x)-T_{t}\chi(y) \leq\ d_{\SR}(x,y)\big(|x| + c_{f}d_{\SR}(x,y)\big)e^{c_{f}d_{\SR}(x,y)} +  C^{\prime}_{R}d_{\SR}(x,y)^{\frac{1}{2r_{y}}}.
\end{equation*}
The proof can be completed by exchanging the role of $x$ and $y$.\qed

\begin{theorem}\label{thm:Lax}
	Assume {\bf (F1)}, {\bf (F2)} and  {\bf (L0)} -- {\bf (L3')}. Then, there exists a continuous function $\overline\chi$ such that 
	\begin{equation}\label{eq:barlimit}
	\lim_{t \to \infty} T_{t}\chi(x) = \overline\chi(x)
	\end{equation}
uniformly on $\overline{B}_{R}$ for any $R \geq 0$. Moreover, we have that 
\begin{equation*}
\overline\chi(x)=T_{t}\overline\chi(x) \quad (t \geq 0,\,\ x \in \R^{d})
\end{equation*}
and $\overline\chi$ satisfies 
\begin{equation}\label{eq:HJcri}
H(x, D\overline\chi(x))=0 \quad (x \in \R^{d})	
\end{equation}
in the viscosity sense.
\end{theorem}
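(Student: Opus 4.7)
The plan is to use monotonicity plus the estimates of \Cref{prop:LAequi} to extract a locally uniform limit of $T_t\chi$ as $t\to\infty$, then transfer the fixed point property via the continuity of the semigroup, and finally invoke dynamic programming to obtain the viscosity equation.

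First, I would show that $t\mapsto T_t\chi(x)$ is nondecreasing for every $x\in\R^d$. The key is that $\chi\in\mathcal{S}$, so $\chi\prec L$: by \Cref{def:domcal}, for every $(\gamma,u)\in\Gamma_{0,t}^{\to x}$ one has
\[
\chi(x)\;\leq\; \chi(\gamma(0))+\int_0^t L(\gamma(s),u(s))\,ds,
\]
hence $\chi\leq T_t\chi$. The operator $T_s$ is trivially monotone in its argument (inspect \eqref{eq:LAsemi}), so applying $T_s$ to both sides and using the semigroup property from \Cref{thm:SRbounds}(5) yields $T_s\chi\leq T_s(T_t\chi)=T_{s+t}\chi$, which is the sought monotonicity. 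Combined with the equiboundedness statement \Cref{prop:LAequi}(i), this yields the pointwise limit $\overline\chi(x):=\lim_{t\to\infty}T_t\chi(x)$ for every $x\in\R^d$. The equicontinuity provided by \Cref{prop:LAequi}(ii) then guarantees that $\overline\chi$ is continuous, and a Dini/Ascoli--Arzel\`a argument on each $\overline{B}_R$ upgrades pointwise convergence to local uniform convergence, establishing \eqref{eq:barlimit}.

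Next I would verify the fixed-point identity. One first checks $\overline\chi\in\mathcal{S}$: nonnegativity follows from $T_t\chi\geq 0$ (\Cref{thm:SRbounds}(3)), while $\overline\chi\prec L$ is obtained by passing to the limit in the domination inequality satisfied by each $T_t\chi$ (which belongs to $\mathcal{S}$ by \Cref{thm:SRbounds}(5)). Now by \Cref{thm:SRbounds}(6), the operator $T_s$ is continuous on $\mathcal{S}$ with respect to locally uniform convergence; combining this with the semigroup law gives
\[
T_s\overline\chi(x)\;=\;T_s\Bigl(\lim_{t\to\infty}T_t\chi\Bigr)(x)\;=\;\lim_{t\to\infty}T_{s+t}\chi(x)\;=\;\overline\chi(x),
\]
which is the claimed fixed-point property.

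Finally, the identity $\overline\chi=T_t\overline\chi$ is the dynamic programming principle for the infinite-horizon problem with running cost $L$ and no discount. A standard argument with smooth test functions then yields that $\overline\chi$ is a viscosity solution of \eqref{eq:HJcri}, recalling from \Cref{rem:min0} that the normalization in force throughout \Cref{sec:Abel} makes the critical constant $\mane=0$. The main obstacle is really the convergence step: without the (somewhat delicate) equiboundedness and equicontinuity of \Cref{prop:LAequi}, compactness would fail on the non-compact space $\R^d$ equipped with the sub-Riemannian distance. Once those bounds are in hand, the remaining arguments are formal manipulations of the semigroup, together with the classical DPP-to-viscosity-solution passage.
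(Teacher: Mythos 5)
Your proposal is correct and follows essentially the same route as the paper: monotonicity of $t\mapsto T_t\chi$ from $\chi\prec L$ together with the semigroup property, the equiboundedness and equicontinuity of \Cref{prop:LAequi} to obtain the locally uniform limit, continuity of $T_s$ on $\mathcal{S}$ plus the semigroup law for the fixed-point identity, and a standard dynamic-programming argument (the paper cites \cite{bib:FM}) for the viscosity property. Your explicit check that $\overline\chi\in\mathcal{S}$ before applying the continuity of $T_s$ is a welcome detail that the paper leaves implicit.
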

\proof
In order to prove the existence of the limit in \eqref{eq:barlimit}, we first show that the map 
\begin{equation*}
t \mapsto T_{t}\chi(x)
\end{equation*}
is nondecreasing for any $x \in \R^{d}$. Indeed, we have that for any $s$, $t \geq 0$ 
\begin{equation*}
T_{t}\chi(x) \leq T_{t}\left(T_{s}\chi(x)\right)= T_{t+s}\chi(x)
\end{equation*}
where the inequality holds since $\chi \prec L $. This implies that 
 \begin{equation*}
 T_{t}\chi(x)  \leq T_{t^{\prime}}\chi(x), \quad \forall\ t \leq t^{\prime}.	
 \end{equation*}
Therefore, since $T_{t}\chi$ is locally equibounded by \Cref{prop:LAequi} the  limit 
\begin{equation*}
\overline\chi(x):=\lim_{t \to \infty} T_{t}\chi(x)
\end{equation*}
exists for all $x \in \R^{d}$. Moreover, again by \Cref{prop:LAequi} we know that the family $T_{t}\chi$ is locally equicontinuous. Thus the above limit is locally uniform.

%Let us set 
%\begin{equation*}
%\overline\chi(x)=\lim_{t \to \infty} T_{t}\chi(x), \quad \forall\ x \in \R^{d}
%\end{equation*}
Next, in order to show that $\overline\chi(x)=T_{t}\overline\chi(x)$ for any $x \in \R^{d}$ and any $t \geq 0$, let $s \geq 0$. Then
\begin{align*}
	T_{s}\overline\chi(x) = \lim_{t \to \infty} T_{s}\big(T_{t}\chi(x)\big)=\lim_{t \to \infty} T_{s+t}\chi(x)
\end{align*}
where we have used the continuity of $T_{t}$ and property (4) in \Cref{thm:SRbounds}.  
Hence, 
\begin{equation*}
T_{s}\overline\chi(x) = \lim_{t \to \infty} T_{s+t}\chi(x)= \overline\chi(x), \quad \forall\ s \geq 0
\end{equation*}
So, we have that
\begin{align}\label{eq:fixed}
\overline\chi(x)=\   \inf_{(\gamma,u) \in \Gamma_{0,t}^{\to x}}\left\{\overline\chi(x) + \int_{0}^{t}{L(\gamma(s), u(s))\ ds} \right\} \quad \forall\ t \geq 0.
\end{align}
The proof of the fact that the function $\overline\chi$ solves \eqref{eq:HJcri} in the viscosity sense is similar to the proof of \cite[Proposition 5.1, Proposition 5.2]{bib:FM}. \qed

\appendix
\section{Abelian-Tauberian Theorem}
\label{sec:Appendix1}
In this appendix, we give a new formulation of the Abelian-Tauberian Theorem, stated in \cite[Theorem 5]{bib:MA}, tailored for the proof of \Cref{thm:convergence}. 

\begin{theorem}\label{thm:newabelian}
	Let $\psi(t,x)$ be the solution of
	\begin{align*}
	\begin{cases}
		\partial_{t} \psi(t,x) + H(x, D\psi(t,x))=0, & \quad (t,x) \in [0,T] \times \R^{d}
		\\
		\psi(T,x)=0, & \quad x \in \R^{d}.
	\end{cases}
	\end{align*}
For any $\lambda > 0$, let $\psi_{\lambda}(x)$ be the solution of
\begin{equation*}
\lambda \psi(x) + H(x, D\psi(x))=0, \quad x \in \R^{d}. 	
\end{equation*}
Then:
\begin{itemize}
\item[($i$)] if $\{\lambda \psi_{\lambda}(\cdot\ )\}_{\lambda > 0}$ locally uniformly converges to a constant $\bar{d} \in \R$ as $\lambda \downarrow 0$, then $\{\frac{1}{T}\psi(0,\cdot\ )\}_{T > 0}$ locally uniformly converges to $\bar{d}$ as $T \to \infty$;
\item[($ii$)] if $\{\frac{1}{T}\psi(0,\cdot\ )\}_{T > 0}$ locally uniformly converges to a constant $\bar{d} \in \R$ as $T \to \infty$, then $\{\lambda \psi_{\lambda}(\cdot)\}_{\lambda > 0}$ locally uniformly converges to $\bar{d}$ as $\lambda \downarrow 0$.
\end{itemize}
\end{theorem}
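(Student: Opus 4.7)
The plan is to exploit the standard identifications $\psi(0,x) = V_T(x)$ and $\psi_\lambda(x) = v_\lambda(x)$, which follow from the dynamic programming principle for the respective HJB equations, and then argue by direct control-theoretic constructions. Without loss of generality I would first reduce to the case $\bar d = 0$ by replacing $L$ with $L - \bar d$ (equivalently, $H$ with $H + \bar d$): this shifts $V_T$ by $-\bar d T$ and $v_\lambda$ by $-\bar d/\lambda$, preserves the hypotheses, and in the setting of the paper (where $\bar d = \mane$, by \Cref{cor:minmane}) retains the sign condition $L \geq 0$.

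For the Abelian direction (ii), I would fix $R > 0$, $x \in \overline{B}_R$ and $\eps > 0$, pick $T$ so large that $V_T(y)/T \leq \eps/2$ on $\overline{B}_R$, and take $(\gamma,u) \in \Gamma_{0,T}^{x\to}$ optimal for $V_T(x)$. By \Cref{thm:boundednesstraj} the endpoint $\gamma(T)$ sits in a fixed ball $\overline{B}_{R'}$ depending only on $R$, so {\bf (LUGC)} provides a control bringing $\gamma(T)$ back to $x$ in time $T_{R'}$ at cost at most $C_{R'}$. Extending this closed loop periodically on $[0,\infty)$ with period $T + T_{R'}$ and summing the resulting geometric series yields
\[
v_\lambda(x) \leq \frac{V_T(x) + C_{R'}}{1 - e^{-\lambda(T+T_{R'})}}.
\]
Multiplying by $\lambda$, using $\lambda/(1-e^{-\lambda s}) \to 1/s$ as $\lambda \downarrow 0$, and then letting $T \to \infty$ gives $\limsup_{\lambda \downarrow 0} \lambda v_\lambda(x) \leq \eps$ locally uniformly in $x$; the matching lower bound $\lambda v_\lambda(x) \geq 0$ is immediate from $L \geq 0$.

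For the Tauberian direction (i), I would take $(\gamma^\lambda,u^\lambda) \in \Gamma_{0,\infty}^{x\to}(e^{-\lambda t}dt)$ $\lambda$-optimal for $v_\lambda(x)$, truncate to $[0,T]$, and use $L \geq 0$ to write
\[
V_T(x) \leq \int_0^T L(\gamma^\lambda, u^\lambda)\,dt \leq e^{\lambda T}\int_0^T e^{-\lambda t} L(\gamma^\lambda, u^\lambda)\,dt \leq e^{\lambda T} v_\lambda(x) + o(1).
\]
Choosing $\lambda = 1/T$ turns this into $V_T(x)/T \leq e \cdot \lambda v_\lambda(x) + o(1)$, so locally uniform convergence $\lambda v_\lambda \to 0$ forces $\limsup_{T\to\infty} V_T(x)/T \leq 0$ locally uniformly, while $V_T/T \geq 0$ again from $L \geq 0$.

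The main obstacle is the Abelian half: the periodic gluing construction essentially depends on {\bf (LUGC)} and on the uniform boundedness of optimal trajectories (\Cref{thm:boundednesstraj}), since otherwise neither the uniform reset cost $C_{R'}$ nor the uniform localisation of $\gamma(T)$ would be available, and the construction would collapse. The Tauberian half is conceptually lighter once one sees the scaling $\lambda = 1/T$, but it hinges on the reduction $L \to L - \bar d$ restoring a sign condition on $L$; in a purely abstract PDE formulation, without such a sign, one would need a more delicate comparison-type argument against the corrector-like quantity $v_\lambda - \bar d/\lambda$.
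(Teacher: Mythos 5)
Your proposal is correct under the paper's standing assumptions, but it takes a genuinely different route from the paper, which does not prove \Cref{thm:newabelian} directly: the paper simply invokes the Abelian--Tauberian theorem of \cite[Theorem 5]{bib:MA} and indicates the two adaptations needed (locally uniform convergence on $\R^{d}$ in place of uniform convergence, and boundedness of optimal pairs in place of boundedness of $L$). You instead give a self-contained control-theoretic proof: identify $\psi(0,\cdot)=V_{T}$ and $\psi_{\lambda}=v_{\lambda}$, normalize to $L-\bar d\geq 0$, prove the Abelian half by the periodic-gluing and geometric-series estimate (resting on {\bf (LUGC)}, \Cref{thm:boundednesstraj} and {\bf (L1)}), and the Tauberian half by truncating a $\lambda$-optimal trajectory and choosing $\lambda=1/T$; both halves are sound. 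What each approach buys: the paper's citation argument works at the level of viscosity solutions, so it needs neither the representation formulas nor the controllability machinery, whereas yours is elementary and self-contained but proves the statement only in the scope in which it is applied, since it uses the identification of $\psi,\psi_{\lambda}$ with the value functions and the hypotheses {\bf (F1)}, {\bf (L1)}--{\bf (L3')}, {\bf (LUGC)}. Two small points to tighten: (a) the return leg in your Abelian construction costs not the energy bound $C_{R'}$ itself but a constant $K_{R'}$ obtained from {\bf (L1)} together with the trajectory bound of \Cref{lem:boundedtrajectories}, exactly as in \Cref{lem:boundedoscillation}; and (b) in direction ($i$) the reduction to $L-\bar d\geq 0$ presupposes $\bar d=\mane=\min L$, which is not part of the hypothesis there---it follows, for instance, from $\lambda v_{\lambda}\geq \min L$ together with $\lambda v_{\lambda}(x^{*})\leq L(x^{*},u^{*})$ (using $f(x^{*},u^{*})=0$), or by first applying direction ($ii$) and \Cref{thm:existencelimit}.
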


This result can be proved arguing as in \cite[Theorem 5]{bib:MA} keeping in mind the following differences:
\begin{enumerate}
	\item the uniform convergence on the full space $\overline\Omega$ is replaced by the locally uniform convergence on $\R^{d}$;
	\item whenever the boundedness assumption on $L$ is used in \cite{bib:MA} one here has to invoke the boundedness of optimal pairs $(\gamma, u)$ in $L^{\infty}(0,T; \R^{d}) \times L^{2}(0,T; \R^{m})$. 
\end{enumerate}

\bibliographystyle{abbrv}
\bibliography{references}

\end{document}